\newcommand{\sm}{\smallskip}
\newcommand{\ms}{\medskip}
\newcommand{\ot}{\otimes}
\def\gr*{^{{\rm gr}*}}
\def\co{\colon}
\newcommand{\tsum}{\textstyle \sum}
\newcommand\tl{\tilde}
\let\goth\mathfrak
\def\gg{\goth g} \newcommand{\g}{\gg}
\def\fra{\goth a}
\def\frh{\mathfrak{h}}
\def\fg{\mathfrak{g}}
\def\fp{\mathfrak{p}}
\def\scD{\mathscr{D}}
\def\scC{\mathscr{C}}
\def\scG{\mathscr{G}}
\def\scL{\mathscr{L}}
\def\caL{\mathcal L}
\def\scIBF{\mathbf{IBF}}%\mathbb{IBF}}
\def\scu{\mathsf{uni}}
\newcommand\lsl{\ensuremath{\mathfrak{sl}}}
\newcommand\sfx{\mathsf{x}}
\newcommand\sfy{\mathsf{y}}
\newcommand\sfz{\mathsf{z}}
\newcommand\sfibf{\mathsf{ibf}}
\newcommand\ibf{\sfibf}
\newcommand\ac{\mathsf{ac}}
\newcommand\al{\alpha}
\newcommand\be{\beta}
\newcommand\ga{\gamma}
\newcommand\de{\delta} \newcommand\De{\Delta}
\newcommand\ka{\kappa}
\newcommand\la{\lambda} \newcommand\La{\Lambda}
 \newcommand\vphi{\varphi}
\newcommand\si{\sigma} 
\newcommand\ta{\tau}
\newcommand\ze{\zeta}
\newcommand\om{\omega} 
\newcommand\bfA{\mathbf{A}}
\newcommand\bfM{\mathbf{M}}
\newcommand{\kalg}{k\mathchar45\mathbf{alg}}
\newcommand{\kALG}{k\mathchar45\mathbf{ALG}}
\newcommand{\Ralg}{R\mathchar45\mathbf{alg}}
\newcommand{\kMOD}{k\mathchar45\mathbf{MOD}}
\newcommand{\RMOD}{R\mathchar45\mathbf{MOD}}
\newcommand{\Rmod}{R\mathchar45\mathbf{mod}}
\newcommand{\kmod}{k\mathchar45\mathbf{mod}}
\newcommand\QQ{\mathbb{Q}}
\newcommand\ZZ{\mathbb{Z}}
\newcommand\AC{\mathbb{AC}}
 \DeclareMathOperator{\ad}{ad}
\DeclareMathOperator{\Aut}{Aut}
\DeclareMathOperator{\bfAut}{\mathbf{Aut}} %for group functor Aut
 \DeclareMathOperator{\Cent}{Ctd}
 \DeclareMathOperator{\cent}{\Cent}
 \DeclareMathOperator{\GL}{GL}
 \DeclareMathOperator{\bfGL}{\mathbf{GL}} %for group functor GL
 \DeclareMathOperator{\Hom}{Hom}
 \DeclareMathOperator{\Id}{Id}
 \DeclareMathOperator{\IBF}{IBF}
 \DeclareMathOperator{\Span}{Span}
 \DeclareMathOperator{\Spec}{Spec}
 \DeclareMathOperator{\tr}{tr}
\DeclareMathOperator{\Zor}{Zor}
\DeclareMathOperator{\rmZ}{Z}
\theoremstyle{plain}
\newtheorem{theorem}{Theorem}[section]
\newtheorem{lemma}[theorem]{Lemma}
\newtheorem{lem}[theorem]{Lemma}
\newtheorem{prop}[theorem]{Proposition}
\newtheorem{cor}[theorem]{Corollary}
\newtheorem{princ}[theorem]{IBF-Principle}
\theoremstyle{definition}
\newtheorem{example}[theorem]{Example}
\newtheorem{con}[theorem]{Question}
\newtheorem{rem}[theorem]{Remark}
\newtheorem{definition}[theorem]{Definition}
\newtheorem{defn}[theorem]{Definition}
\newtheorem{descent}[theorem]{Descent Theory}
\newtheorem{invariance}[theorem]{Automorphism invariance}
\newtheorem{bas-ch}[theorem]{Base change}
\numberwithin{equation}{section} \allowdisplaybreaks
\begin{document}

\title{ Invariant bilinear forms of algebras given by faithfully flat descent}

\date{\today}
\author{E.~Neher}
\address{Department of Mathematics and Statistics, University of Ottawa,
Ottawa, Ontario K1N 6N5, Canada}
\thanks{E. Neher wishes to thank NSERC for partial support through a Discovery grant}\email{neher@uottawa.ca}
\author{A. Pianzola}
\address{Department of Mathematics, University of Alberta,
    Edmonton, Alberta T6G 2G1, Canada, and Centro  de Altos Estudios en Ciencias Exactas, Avenida de Mayo 866, (1084) Buenos Aires, Argentina.}
\thanks{A. Pianzola wishes to thank NSERC and CONICET for their continuous support}\email{a.pianzola@gmail.com}

% \author {Arturo Pianzola\thanks{Supported by the NSERC Discovery Grant Program and by CONICET.},
\author{D. Prelat}
\address{Centro de Altos Estudios en Ciencias Exactas, Avenida de Mayo 866, (1084) Buenos Aires, Argentina.}
\thanks{D. Prelat is supported by a Research Grant from Universidad CAECE}
\email{dprelat@caece.edu.ar}

\author{C. Sepp}
\address{Centro de Altos Estudios en Ciencias Exactas, Avenida de Mayo 866, (1084) Buenos Aires, Argentina.}
\thanks{C. Sepp is supported by CONICET and Universidad CAECE}
\email{csepp@caece.edu.ar}

\keywords{Invariant bilinear form, Twisted form, Centroid, Faithfully flat descent.}
 \subjclass[2010]{Primary 17B67; Secondary 17B01, 12G05, 20G10.}

\maketitle

\begin{abstract}
 The existence of nondegenerate invariant bilinear forms is one of the most important tools in the study of Kac-Moody Lie algebras and extended affine Lie algebras. In practice, these forms are created, or shown to exist, either by assumption or in an ad hoc basis. The purpose of this work is to describe the nature of the space of invariant bilinear forms of certain algebras given by faithfully flat descent (which includes the affine Kac-Moody Lie algebras,  as well as Azumaya algebras and multiloop algebras) within a functorial framework. This will allow us to conclude the existence, uniqueness and nature of invariant bilinear forms for many important classes of algebras.
 \end{abstract}

 \pagestyle{myheadings}
 \markboth{ {\tiny Version \today}} {Version \today }
 %\markright{Version \today}

\section{Introduction}

One of the key ingredients for the study of Kac-Moody Lie algebras is the (generalized) Casimir operator. The existence of this operator is in turn based upon the existence of a symmetric invariant nondegenerate bilinear form on the Lie algebra.  Kac-Moody Lie algebras admitting such bilinear forms are called symmetrizable, and the most important example of these is given by the affine Kac-Moody Lie algebras. \sm

It is also known \cite{P2} that the affine algebras are precisely the twisted forms (given by Galois, hence also faithfully flat descent) of the ``split" affine algebras, namely of algebras of the form $\mathfrak{g} \otimes_k k[t^{\pm{1}}]$ where $\mathfrak{g}$ is a finite dimensional simple Lie algebra over an algebraically closed field $k$ of characteristic $0$. In this paper we shall establish the representability (in a functorial sense) and explicit description of the space of invariant $k$-bilinear forms for a large class of (in general infinite dimensional) algebras given by faithfully flat descent.

The techniques developed in this paper not only apply to Lie algebras, but to other classes of algebras as well, such as Azumaya algebras,  octonion, alternative or Jordan algebras; see \S \ref{applisec2} for an (incomplete) list of examples. Just as in the theory of Kac-Moody Lie algebras, the existence of a nondegenerate or even nonsingular invariant bilinear form on these algebras has important structural consequences. We therefore follow the approach of \cite{NP} and study invariant bilinear forms of arbitrary (nonassociative) algebras.

Among other things we will recover \cite[Lemma~2.3]{MSZ}, which considers this question for Lie algebras in the untwisted case. The need to consider graded invariant forms in the study of extended affine Lie algebras will require a ``graded version" of our main results. This will be given towards the end of the paper in \S\ref{sec:graded}, where we will provide, among other things, a classification-free proof of Yoshii's Theorem \cite{y:lie} for multiloop Lie tori stating that graded invariant bilinear forms are unique up to scalars.
We will also obtain new (as well as shed new light on known) results on Azumaya algebras (Theorem~\ref{teo:Azu}).

The technique that we use to describe the nature of invariant bilinear forms of algebras is based on two crucial ingredients:
\begin{itemize}
  \item That invariant bilinear forms are functorial in nature and that this functor is representable.

   \item Descent theory.
\end{itemize}
This leads us to outline a general theory of descent within a functorial setting which we find to be of independent interest. It is developed in \S\ref{sec:app} and later applied to the functor $\scIBF$ representing invariant bilinear forms. \sm

{\em Acknowledgement.} We thank Ottmar Loos for his many useful suggestions and comments. The first two authors gratefully acknowledge the hospitality of BIRS (Banff) and the Fields Institute (Toronto) where part of the research for this paper was carried out.
\bigskip

\section{Descent for functors stable under base change}\label{sec:app}

The goal of this section is to show that functors from the category of algebras to the category of modules that are stable under base change preserve descended forms. Relevant to our paper is the special case of the ``functor of invariant bilinear forms" $\scIBF$ as stated in Corollary~\ref{thm-ffdes}. To write the proof in this particular case, however,  obscures the more general nature of the construction that is taking place. We have thus chosen a framework that allows the essence of the argument to come across. Our setting, while somewhat abstract, is sufficiently ample for our purposes and of independent interest. By appealing to fibered categories, an even more general set up is possible in which the concept of functors stable under base change can be defined.
It is nevertheless a delicate question to identify, within this more general setting,  which arrows play the (crucial) role of faithfully flat base change and their accompanying descent theory. We leave it to the interested reader to explore such more general scenarios. \ms

\begin{definition}
[Categories $\kalg$, $\kALG$, $\kMOD$] \label{def:cate}
We fix once and for all a commutative associative unital ring $k$.

We denote by $\kalg$ the category of commutative associative unital $k$-algebras with unital $k$-linear algebra homomorphisms as morphisms. The symbol $R\in \kalg$ means that $R$ is an object in $\kalg$. We also use the notation $R/k$ to describe this situation. By definition, $R$ comes accompanied with a ``structure" ring homomorphism $\sigma_{R,k} \co k \to R$ under which $R$ is viewed as a $k$-module.

Of course since $R$ is a commutative ring, we also have the category $\Ralg$.  An object $S \in \Ralg$ will be viewed as an object of $\kalg$ via $\sigma_{S,k} = \sigma_{S,R} \circ \sigma_{R,k}$. The arrows of $\Ralg$ are then arrows of $\kalg$ and we have a natural forgetful functor $\Ralg \to \kalg$.

Let $ \alpha \co R \to S$ be a morphism in $\kalg$, $M$ be an $R$-module and $N$ an $S$-module. We say that a map $ f \co M \to N$ is an $\alpha${\it -semilinear module homomorphism} if it is additive and satisfies $f(rm) = \alpha(r)f(m)$ for all $r \in R$ and $m \in M$. Such a map $f$ is necessarily $k$-linear, namely if $M$ and $N$ are viewed as $k$-modules by means of the structure maps $\sigma_{R,k}$ and $\sigma_{S,k}$ then $f(cm) = cf(m)$ for all $c \in k$ and $m \in M$. If $M$ and $N$ have algebra structures and $f$ preserves multiplication, then we say that $f$ is an $\alpha${\it -semilinear algebra homomorphism}.

Given an $\alpha$-semilinear module homomorphism $ f \co M \to N$ as above and a morphism $\beta \co S \to T$ in $\kalg$, there exists a unique $\beta$-semilinear module homomorphism $f \ot \beta \co M \ot_R S \to N \ot_S T$ satisfying $(f \ot \beta) (m \ot s) = f(m) \ot \beta(s)$. It is clear that if $f$ is an algebra homomorphism, then so is $f \ot \beta$.

The objects of the category $\kALG$ are pairs $(R,A)$ consisting of
an $R\in \kalg$ and an $R$-algebra $A$. By this we mean
an $R$-module $A$ together with an $R$-bilinear map $A \times A \to A$, $(a_1, a_2) \mapsto a_1 a_2$. In particular, we do not require any further identities (even though Lie algebras are our main interest, all algebras are considered). A  {\it morphism} in $\kALG$, written as $(\al, f) \co (R,A) \to (S,B)$, is a pair consisting of a morphism $\al \co R \to S$ in $\kalg$ together with an $\al$-semilinear algebra homomorphisms $f\co A \to B$.\footnote{ The category $\kALG$ appears in similar form in \cite[3.2]{LoPeRa}.}

The category $\kMOD$ is defined in complete analogy to $\kALG$, with algebras replaced by modules and algebra homomorphisms replaced by module homomorphisms. Thus, an object in $\kMOD$ is a pair $(R, M)$ consisting of some $R\in \kalg$ and an $R$-module $M$ and  a morphism in $\kMOD$ is a pair $(\al, f) \co (R, M) \to (S, N)$ consisting of a morphism $\al \co R \to S$ in $\kalg$ and an $\al$-semilinear module homomorphism $f\co M \to N$. \sm

\end{definition}

\begin{definition}[Base change]\label{def:base-change}
Both categories $\kALG$ and $\kMOD$ admit base change by objects of $\kalg$. We explain this for $\kALG$ and at the same time also set up our notation. Let $\al \co R \to S$ be a morphism in $\kalg$ and let $A$ be an $R$-algebra. Viewing $S$ as an $R$-algebra via $\al$, the tensor product $A\ot_R S$ is an $S$-algebra whose product is given by $(a_1 \ot s_1) (a_2 \ot s_2) = (a_1 a_2) \ot (s_1 s_2)$ for $a_i \in A$ and $s_i \in S$. Since we will in this section repeatedly use different algebra homomorphisms between $R$ and $S$, it will be useful to temporarily employ the more precise notation $A\ot_\al S$ instead of the traditional $A\ot_R S$. We will also appeal to this notation elsewhere in the paper whenever it is necessary to emphasize the structure map $\al$.
We define \[ \al^A \co A \to A\ot_\al S, \quad a \mapsto a\ot 1_S,
\]
and note that $(\al, \al^A) \co (R,A) \to (S, A\ot_\al S)$ is a morphism in $\kALG$. Moreover, for any morphism $(\al, f) \co (R, A) \to (S, B)$ in $\kALG$ there exists a unique $S$-linear algebra homomorphism $f^\alpha\co {A \ot_\al S} \to B$ satifying $f^\alpha(a\ot s) = sf(a)$. It is clear from the definitions that
\begin{equation}\label{def:base-change1}
\vcenter{
 \xymatrix{(R, A) \ar[rr]^{(\al, f)} \ar[dr]_{(\al, \al^A)}
   && (S,B) \\  & (S, A\ot_\al S) \ar[ur]_{(\Id_S, f^\alpha)}
}}\end{equation}
is a commutative diagram in $\kALG$. Assume that $\be \co S \to T$ is another morphism in $\kalg$. We can then add to the diagram~\eqref{def:base-change1} the morphism $(\be, \be^B) \co (S, B) \to (T, B\ot_\be T)$ and obtain the diagram
\begin{equation}\label{def:base-change2}
\vcenter{
\xymatrix{(R, A) \ar[r]^{(\al, f)} \ar[d]_{(\al, \al^A)}
   & (S,B)  \ar[d]^{(\be, \be^B)}  \\
    (S, A\ot_\al S) \ar[r]_{(\be, f\ot \be)} &(T, B \ot_\be T)
   } }
\end{equation}
which commutes since  $(\be, f\ot\be) = (\be,\be^B) \circ (\Id_S, f^\alpha)$. One should view this diagram as the base change of $(\al,f)$ by $\beta$. \sm
\end{definition}

\begin{definition}[Functors over $\kalg$]\label{def:fun-fib}
The projection onto the first component defines functors $\Pi_\bfA \co \kALG \to \kalg$ and $\Pi_\bfM \co \kMOD \to \kalg$. We say that a functor $F \co \kALG \to \kMOD$ \textit{is a functor over} $\kalg$ if  $\Pi_\bfM \circ F = \Pi_\bfA$, i.e.,
\[  \xymatrix{\kALG \ar[rr]^F \ar[dr]_{\Pi_\bfA} &&\kMOD \ar[dl]^{\Pi_\bfM} \\ & \kalg } \]
is a commutative diagram. Such a functor $F$ maps an object $(R,A)\in \kALG$ to $F(R,A) = (R, F_R(A))$ for some $R$-module $F_R(A)$, and it sends a morphism $(\al, f) \co (R,A) \to (S, B)$ to $F(\al,f) = (\al, F_\al(f))$ for some $\al$-semilinear map $F_\al(f) \co F_R(A) \to F_S(B)$.

\noindent {\bf Convention.} In what follows and without further explication, if $R=S$ and $\al = \Id_S$ then we will simply write $F(f)$ instead of $F_\al(f)$.

 Given a morphism $\al \co R \to S$ in $\kalg$ and $(R,A) \in \kALG,$ we can apply $F$ to the morphism $(\al, \al^A) \co (R,A) \to (S, A\ot_\al S)$ in $\kALG$ of \ref{def:base-change} and get a morphism in $\kMOD$
\[ F(\al, \al^A) =(\al, F_\al(\al^A)) \co (R, F_R(A)) \to (S, F_S(A\ot_\al S)).\]
Since this map is $\alpha$-semilinear, it induces an $S$-linear map
\begin{equation} \label{def:fun-fib1}
 \nu_{A, \al}^F \co F_R(A)\ot_\al S \to F_S(A\ot_\al S), \quad
     m\ot s \mapsto s F_\al(\al^A)(m)
   \end{equation}
of $S$-modules. These maps will play an essential role in our work for they constitute the essential ingredient in the definition of functors stable under base change. For convenience in what follows, if $F$ is fixed in the discussion, we will denote $ \nu_{A, \al}^F$ simply by $ \nu_{A, \al}$.
\end{definition}

\begin{lem}\label{lem:tran}
Suppose that $F \co \kALG \to \kMOD$ is a functor over $\kalg$. \sm

{\rm (a) ($F$ and $\nu$ commute)} Let $(\al, f)  \co (R,A) \to (S,B)$ be a morphism in $\kALG$ and let $\be \co S \to T$ be a morphism in $\kalg$. Then the diagram
\[  \xymatrix@C=50pt{
    F_R(A) \ot_\al S  \ar[r]^{F_\al(f) \ot \be} \ar[d]_{\nu_{A,\al}}
     & F_S(B) \ot_\be T \ar[d]^{\nu_{B, \be}} \\
    F_S(A\ot_\al S) \ar[r]_{F_\be(f\ot \be)} & F_T(B \ot_\be T) }
  \]
commutes.

{\rm (b) (Transitivity)} Let $R \xrightarrow{\al} S \xrightarrow{\be} T$ be morphisms in $\kalg$ and let $(R,A) \in \kALG$. Then the diagram
\[  \xymatrix@C=50pt{
    F_R(A) \ot_\al S  \ar[r]^{\Id_{F_R(A)} \ot \be} \ar[d]_{\nu_{A,\al}}
     & F_R(A) \ot_{\be\circ \al} T \ar[d]^{\nu_{A, \be \circ \al }} \\
    F_S(A\ot_\al S) \ar[r]_{F_\be(\Id_A \ot \be)} & F_T(A \ot_{\be\circ \al} T) }
  \]commutes.
\end{lem}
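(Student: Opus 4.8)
The plan is to prove both squares by the same two-step reduction, using that $F$ is a functor (hence preserves composition) together with the base-change diagrams of Definition~\ref{def:base-change}.

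\textbf{Step 1 (reduce to elementary tensors).} First I would observe that in each diagram both composites are $\be$-semilinear maps out of $F_R(A)\ot_\al S$. Indeed, $\nu_{A,\al}$, $\nu_{A,\be\circ\al}$ and $\nu_{B,\be}$ are linear (over $S$, $T$, $T$ respectively), the top maps $F_\al(f)\ot\be$ and $\Id_{F_R(A)}\ot\be$ are $\be$-semilinear by construction, and the bottom maps $F_\be(f\ot\be)$ and $F_\be(\Id_A\ot\be)$ are $\be$-semilinear because $(\be,f\ot\be)$ and $(\be,\Id_A\ot\be)$ are morphisms in $\kALG$. Since every element of $F_R(A)\ot_\al S$ is a finite sum of the form $\sum_i s_i\cdot(m_i\ot 1_S)$, and both composites are additive and $\be$-semilinear, it suffices to check that they agree on the elementary tensors $m\ot 1_S$ with $m\in F_R(A)$.

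\textbf{Step 2 (unwind $\nu$ and invoke functoriality).} On such a tensor the definition \eqref{def:fun-fib1} collapses to $\nu_{A,\al}(m\ot 1_S)=F_\al(\al^A)(m)$, and likewise $\nu_{B,\be}(n\ot 1_T)=F_\be(\be^B)(n)$ and $\nu_{A,\be\circ\al}(m\ot 1_T)=F_{\be\circ\al}((\be\circ\al)^A)(m)$. For part~(a), tracing $m\ot 1_S$ around the square then reduces the claim to the identity $F_\be(\be^B)\circ F_\al(f)=F_\be(f\ot\be)\circ F_\al(\al^A)$ of maps $F_R(A)\to F_T(B\ot_\be T)$; this is precisely the result of applying $F$ to the commutative square \eqref{def:base-change2} in $\kALG$. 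For part~(b), the same tracing reduces the claim to $F_{\be\circ\al}((\be\circ\al)^A)=F_\be(\Id_A\ot\be)\circ F_\al(\al^A)$, which follows by applying $F$ to the commutative triangle $(\be,\Id_A\ot\be)\circ(\al,\al^A)=(\be\circ\al,(\be\circ\al)^A)$ in $\kALG$. The only point needing verification there is the elementary identity $(\Id_A\ot\be)\circ\al^A=(\be\circ\al)^A$, which holds because both maps send $a\mapsto a\ot 1_T$.

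I expect the main (though modest) obstacle to be the bookkeeping of semilinearity: one must keep straight which maps are linear over which ring, and confirm that both composites in each diagram are genuinely $\be$-semilinear, so that the reduction to the tensors $m\ot 1_S$ in Step~1 is legitimate. Once that is in place the heart of the argument is purely formal, namely the functoriality of $F$ transported along the base-change diagrams, with the maps $\nu$ serving only as the bridge between the semilinear maps $F_\al(\al^A)$ and the induced linear maps on the base-changed modules.
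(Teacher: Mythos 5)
Your proof is correct and essentially identical to the paper's: both arguments use the explicit formula \eqref{def:fun-fib1} for $\nu$ to reduce commutativity of the two squares to the identities $F_\be(\be^B)\circ F_\al(f)=F_\be(f\ot\be)\circ F_\al(\al^A)$ and $F_{\be\circ\al}\big((\be\circ\al)^A\big)=F_\be(\Id_A\ot\be)\circ F_\al(\al^A)$, obtained by applying $F$ to the commutative diagram \eqref{def:base-change2} and to the factorization $(\be,\Id_A\ot\be)\circ(\al,\al^A)=\big(\be\circ\al,(\be\circ\al)^A\big)$ respectively. The only cosmetic difference is that the paper evaluates both composites directly on a general tensor $m\ot s$ (each side producing the common factor $\be(s)$), whereas you first reduce to tensors $m\ot 1_S$ via the semilinearity observation---the same computation in slightly different packaging.
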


\begin{proof} (a) The  maps $f \ot \beta$ and  $ F_\al(f) \ot \be$  are described  in Definitions \ref{def:cate} and \ref{def:base-change}.  The bottom horizontal arrow is obtained by applying $F$ to $(\beta, f \ot \beta)$.
For $m\in F_R(A)$ and $s\in S$ we have
\begin{align*}
  \big( \nu_{B, \be} \circ (F_\al(f) \ot \be )\big)(m\ot s) &=
     \be(s) F_\be(\be^B) \big(F_\al(f)(m)\big), \, \text{\rm while}\\
  \big(F_\be(f\ot \be) \circ \nu_{A, \al} \big) (m\ot s) &=
      \be(s) F_\be(f \ot \be) \big(F_\al(\al^A)(m)\big).
  \end{align*}
It therefore suffices to show that $F_\be(\be^B) \circ F_\al(f)= F_\be(f \ot \be) \circ F_\al(\al^A)$. But this follows by applying the functor $F$ to the commutative diagram~\eqref{def:base-change2}. \sm

(b) With the notation of (a) we have
\begin{align*}
  \big(\nu_{A, \be \circ \al}\circ (\Id \ot \be) \big)(m\ot s) &= \be(s) \, F_{\be \circ \al}\big( (\be \circ \al)^A\big) (m), \, \text{\rm while} \\
  (F_\be(\Id_A \ot \be) \circ \nu_{A, \al})\, (m\ot s)
  &= \be(s)F_\be(\Id_A \ot \be) \, F_\al(\al^A)\, (m).
  \end{align*}
It is therefore sufficient to show that $F\big( (\be \circ \al)^A\big) = F_\be( \Id_A \ot \be) \circ F_\al( \al^A)$. By functoriality, this is a consequence of $\big((\be \circ \al), (\be \circ \al)^A\big) = (\be, \Id_A \ot \be) \circ (\al, \al^A)$. \end{proof}

\begin{definition}[Functors stable under base change]\label{fun-comm}
Let $F\co \kALG \to \kMOD$ be a functor over $\kalg$. We will say {\em $F$ is stable under base change}  if for all morphisms $\al\in \kalg$ and all $(R,A)\in \kALG$ the $S$-module homomorphism $\nu_{A, \al}^F \co F_R(A) \ot_\al S \to F_S(A \ot_\al S)$ defined in (\ref{def:fun-fib1}) is an isomorphism.\end{definition}

\begin{example}\label{derived} An example of a functor stable under base change is the ``invariant bilinear form functor'' $\scIBF$ of \ref{def:repr}, see Proposition~\ref{ibf-fun}(b), which is most relevant to our work. Of course the quintessential example of a functor $F \co  \kALG \to \kMOD$ over $\kalg$ stable under base change is the ``tensor product" functor that attaches to $(R,A)$ the  $R$-module $A \ot_R A $  with the natural definition of $F$ at the level of arrows. The stability of base change is given by the canonical $S$-module isomorphism $(A \ot_R A)\ot_R S \simeq (A \ot_R S) \ot_S (A \ot_R S)$.

To justify the generality of this section we give another example. Let $A$ be an $R$-algebra. Recall that its derived algebra is defined by
\[ \scD(A) = \Span_R\{ a_1 a_2 : a_i \in A\} = \Span_\ZZ\{ a_1 a_2 : a_i \in A\}.\]
If $(\al,f)\co (R,A) \to (S,B)$ is a morphism in $\kALG$, then $f\big(\scD(A)\big) \subset \scD(B)$. Hence by restriction we obtain a map $\scD_\al(f) \co \scD(A) \to \scD(B)$. If we view $\scD(A)$ as a submodule of the $R$-module $A$ it is immediate that $\big(\al, \scD_\al(f)\big)\co \big(R, \scD(A)\big) \to \big( S, \scD(B)\big)$ is a morphism in $\kMOD$ and that the assignment
\[ (R,A) \mapsto \big(R, \scD(A)\big)\quad \hbox{and} \quad (\al,f) \mapsto \big( \al, \scD_\al(f)\big) \]
defines a functor $\scD\co \kALG \to \kMOD$ over $\kalg$.

The explicit nature of the map $\nu_{A, \al}^\scD \co  \scD (A) \ot_\al S \to \scD (A \ot_\al S)$ is clear: $a_1a_2 \ot s \mapsto (a_1 \ot s)(a_2 \ot 1) = a_1a_2 \ot s = (a_1 \ot 1)(a_2 \ot s)$. Thus $\nu_{A, \al}^\scD$ is always surjective. This map, however, need to be injective since there is no reason for the natural map $\scD (A) \ot_\al S \to A \ot_\al S$ to be so (it is, for example, if $\al: R \to S$ is flat).

Consider a new functor $F \co  \kALG \to \kMOD$ over $\kalg$ which assigns to $(R,A)$ the pair $\big(R , A/ \scD(A)\big)$ and is defined at the level of arrows in the natural way. We leave it to the reader to check that the surjectivity of $\nu_{A, \al}^\scD$ easily implies that $\nu_{A, \al}^F$ is an isomorphism of $S$-modules. Thus $F$ is stable under base change.

\smallskip
\end{example}

\begin{rem}\label{rescate}  {\rm The results of this section can be generalized by replacing $\kalg$, $\kALG$ and $\kMOD$ by subcategories stable under base change and by modifying the Definition~\ref{def:fun-fib} correspondingly. As we shall see, the most relevant case for us is that of functors which are stable under faithfully flat base change. We will leave it to the interested reader to work out the necessary axioms. }
\end{rem}

\begin{descent}[Faithfully flat descent of modules and algebras]\label{descent} We give a short review of the descent theory of modules and algebras. Our ultimate objective is to outline a descent theory in the setting of functors stable under faithfully flat base change. We also use the opportunity to introduce notation and a presentation of descent theory that is implicitly, but not explicitly used in the standard references (\cite{KO, SGA1, Wht}), cf.\ \cite{P}. Without these the formalism for descent in the functorial setting is impossible to redact.
\sm

Assume that $S/R$ is faithfully flat. We let $ S''=S\ot_R S$ and denote by $\al_i:S\to S''$, $i=1,2$, the ``projections''  defined  by
\[
 \al_1(s)=s\ot 1 \quad\text{\rm and} \quad \al_2(s)=1\ot s,\]
which allow us to view $S''$ as an $S$-algebra in two different ways. Note that since $\al \co  R \to S$ is faithfully flat, $\al_1 \circ \al = \al_2 \circ \al$. Suppose $M$ and $N$ are $R$-modules and that $N$ is an $S/R$-form of $M$. Thus there exists an $S$-module isomorphism $\theta \co (M\ot_\al S) \to (N \ot_\al S)$. To $\theta$ and $i=1,2$ we associate the $S''$-module isomorphisms $\theta_i$ defined by the following commutative diagram.
\begin{equation} \label{def:thet1}  \vcenter{
\xymatrix@C=50pt{
    (M \ot_\al S) \ot_{\al_i} S'' \ar[r]^{\theta \ot_{\al_i} \Id_{S''}}_\simeq
       \ar[d]_{\ta_i^M}^\simeq
          & (N \ot_\al S) \ot_{\al_i} S''  \ar[d]^{\ta_i^N}_\simeq\\
       M \ot_{\al_i \circ \al } S'' \ar[r]^{\theta_i}_\simeq & N \ot_{\al_i \circ \al} S''
} } \end{equation}
Here $\ta_i \co S\ot_{\al_i} S'' \to S''$ is defined by $\ta_i(s_1 \ot s_2 \ot s_3) = \al_i(s_1) (s_2 \ot s_3)$, e.g.\ $\ta_2(s_1 \ot s_2  \ot s_3) = (s_2  \ot s_1s_3)$, while $\ta_i^M \big((m \ot s_1) \ot s_2 \ot s_3\big) = m \ot \al_i(s_1)(s_2 \ot s_3)$. The maps $\ta_i^N$ are defined similarly. In what follows, $\ta_i^M$ and $\ta_i^N$ are viewed as $S''$-linear maps.

The situation can be summarized by the following commutative diagram
\begin{equation} \label{dia:desc} \vcenter{
\xymatrix{ 0 \ar[r] &
   M \simeq M\ot_{\Id_R}R\ar[rr]^{\Id_M \ot \al} && M\ot_\al S \ar@<0.5ex>[rr]^{\Id_M \ot \al_1} \ar@<-0.5ex>[rr]_{\Id_M \ot\al_2} \ar[d]_\simeq^\theta
   && M \ot_{\al_i \circ \al} S''
   \ar@<0.5ex>[d]^>>>>>{\theta_1} \ar@<-0.5ex>[d]_>>>>>{\theta_2}
  \\ 0 \ar[r]&
  N\simeq N\ot_{\Id_R}R \ar[rr]^{\Id_N \ot \al} && N \ot_\al S \ar@<0.5ex>[rr]^{\Id_N \ot \al_1} \ar@<-0.5ex>[rr]_{\Id_N \ot \al_2}
   && N \ot_{\al_i \circ \al} S''
    }  }
  \end{equation}
The  rows are exact since $\al \co  R \to S$ is faithfully flat, e.g., $\Id_M \ot \al$ is injective  and its image  $M \ot 1_S$ is the $R$-submodule $M \ot_\al S$ where $\Id_M \ot \al_1$ and $\Id_M \ot \al_2$ agree. The $S/R$-cocycle $u$ defining the $S/R$-form $N$ is \[ u = \theta_2^{-1} \circ \theta_1 \in \Aut_{S''}(M\ot_{\al_1 \circ  \al} S'') = \Aut_{S''}(M\ot_R S'')\] as we now explain.\footnote{Note that $u$ can indeed be viewed as an $S''$-module automorphism $M\ot_{\al_1 \circ  \al} S''$  because $\al_1 \circ  \al = \al_2 \circ  \al$. In what follows we will view $S''$ as an $R$-algebra via either one of these two (equal) maps. The notation $M \ot_R S''$ responds to this convention.}
Let
\begin{equation}\label{desform}
L = \{ x \in M\ot_\al S : u\big((\Id_M \ot \al_1)(x)\big) = (\Id_M \ot \al_2)(x)\}.
\end{equation}
It is clear that $L$ is an $R$-submodule of $M \ot_\al S = M \ot_R S$,  and a simple diagram chase in (\ref{dia:desc}) above shows that the restriction of $\theta$ to $L$ induces an isomorphism with $N \ot 1_S \subset N \ot_\al S= N \ot_R S$. In other words, up to $R$-module isomorphism, our module $N$ corresponds to the cocycle $u$.

It is well-known (and easy to check in any case) that $u$ is a cocycle, i.e. that $u \ot \al_{1,3} = (u \ot \al_{2,3}) \circ (u \ot \al_{1,2})$ where the $\al_{i,j} \co S'' \to S''' = S \ot_\al S \ot_\al S$ are the natural $S''$-algebra morphisms defined by putting $1_S$ in the position $l\ne i,j$. The cocycle condition can be rewritten in the form $\theta_{1,3} = \theta_{2,3} \circ \theta_{1,2}$ where the $\theta_{i,j}$ are automorphisms of the $S'''$-module $M \ot_{\al_{i,j}\circ \al_i \circ \al} S'''= M \ot_R S'''$ defined using a diagram similar to (\ref{def:thet1}).

For $R$-algebras the situation is identical. Say that both $A$ and $B$ are $R$-algebras and that our isomorphism $\theta$ above is now an $S$-algebra isomorphism. Then $u$ is an $S''$-algebra automorphism of $A\ot_R S''$,  the descended $R$-module $L$ is an $R$-subalgebra of $A \ot_\al S=A \ot_R S$ and  the restriction of $\theta$ induces an $R$-algebra isomorphism between $L$ and $B \simeq B \otimes 1_S$.\end{descent}

\begin{theorem}\label{mainfunctor} Let $A$ be an $R$-algebra, and let $B$ be an $S/R$ form of $A$ determined by the cocycle $u$ as described in {\rm \ref{descent}} above. If $F\co \kALG \to \kMOD$ is a functor over $\kalg$ stable under base change, then $F_R(B)$ is an $S/R$-form of the $R$-module $F_R(A)$ which is isomorphic as an $R$-module to the one given by the cocycle \[\nu_{A, \al_2 \circ \al}^{-1} \circ F(u) \circ \nu_{A, \al_1 \circ \al} \in {\rm Aut}_{S''}\big(F_R(A) \ot_R  S''\big).\]
\end{theorem}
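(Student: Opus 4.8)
The plan is to push the entire descent datum for $B$ through $F$, using stability under base change to turn the comparison maps $\nu$ into genuine isomorphisms, and then to read off the resulting cocycle. First I would exhibit $F_R(B)$ as an $S/R$-form of $F_R(A)$. The form $B$ is given by an $S$-algebra isomorphism $\theta\co A\ot_\al S\to B\ot_\al S$, that is, an isomorphism $(\Id_S,\theta)$ in $\kALG$; applying $F$ yields the $S$-module isomorphism $F(\theta)\co F_S(A\ot_\al S)\to F_S(B\ot_\al S)$. Since $F$ is stable under base change (Definition~\ref{fun-comm}), the maps $\nu_{A,\al}$ and $\nu_{B,\al}$ of \eqref{def:fun-fib1} are isomorphisms, so I may set
\[
\Theta \;=\; \nu_{B,\al}^{-1}\circ F(\theta)\circ\nu_{A,\al}\co F_R(A)\ot_\al S\longrightarrow F_R(B)\ot_\al S .
\]
This $S$-module isomorphism shows that $F_R(B)$ is an $S/R$-form of $F_R(A)$; by the descent theory recalled in~\ref{descent}, $F_R(B)$ is then isomorphic, as an $R$-module, to the module descended from the cocycle $u_F=\Theta_2^{-1}\circ\Theta_1$, where $\Theta_1,\Theta_2$ arise from the diagram~\eqref{def:thet1} applied to $\Theta$ (with the identifications $\ta_i^{F_R(A)}$, $\ta_i^{F_R(B)}$). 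It thus remains to identify $u_F$ with the displayed cocycle.

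The technical heart is the intermediate identity
\[
\Theta_i \;=\; \nu_{B,\,\al_i\circ\al}^{-1}\circ F(\theta_i)\circ\nu_{A,\,\al_i\circ\al}\qquad(i=1,2),
\]
which relates the base-changed comparison maps of $\Theta$ to $F$ applied to the descent isomorphisms $\theta_i$. I would prove it by feeding the definition of $\Theta$ into the defining square~\eqref{def:thet1} of $\Theta_i$ and invoking both halves of Lemma~\ref{lem:tran}. Part~(a), applied to the $S$-morphism $(\Id_S,\theta)$ and the projection $\al_i\co S\to S''$, shows that $F(\theta)$ base-changed along $\al_i$ is intertwined through the maps $\nu_{-,\al_i}$ with $F$ of $\theta$ base-changed along $\al_i$; part~(b), the transitivity statement for $R\xrightarrow{\al}S\xrightarrow{\al_i}S''$, identifies the route ``base change along $\al_i$ then the $\ta_i$-identification'' with direct base change along $\al_i\circ\al$. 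Concretely, one uses that $\ta_i^{F_R(A)}$ is the $S''$-linearization of $\Id_{F_R(A)}\ot\al_i$, so that transitivity makes $\nu_{A,\al_i\circ\al}\circ\ta_i^{F_R(A)}$ agree with $F(\ta_i^A)\circ\nu_{A\ot_\al S,\al_i}\circ(\nu_{A,\al}\ot_{\al_i}\Id)$ (and likewise for $B$); combining these two compatibilities with the square defining $\theta_i$ gives the identity.

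Granting the identity, the conclusion is immediate. Because $\al\co R\to S$ is faithfully flat we have $\al_1\circ\al=\al_2\circ\al$, and since $\nu_{B,\ga}$ depends only on the structure map $\ga$ by~\eqref{def:fun-fib1}, the maps $\nu_{B,\al_1\circ\al}$ and $\nu_{B,\al_2\circ\al}$ coincide and cancel. Hence
\[
u_F=\Theta_2^{-1}\circ\Theta_1=\nu_{A,\al_2\circ\al}^{-1}\circ F(\theta_2)^{-1}\circ F(\theta_1)\circ\nu_{A,\al_1\circ\al},
\]
and functoriality gives $F(\theta_2)^{-1}\circ F(\theta_1)=F(\theta_2^{-1}\circ\theta_1)=F(u)$, so that $u_F=\nu_{A,\al_2\circ\al}^{-1}\circ F(u)\circ\nu_{A,\al_1\circ\al}$, precisely the asserted cocycle in $\Aut_{S''}\big(F_R(A)\ot_R S''\big)$.

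The main obstacle is the middle paragraph: none of the steps is deep once Lemma~\ref{lem:tran} is available, but the bookkeeping is delicate. In particular, one must keep careful track of the various base-change maps and $\ta_i$-identifications and verify that the $\ta_i$-identifications occurring in the descent diagram for the $F$-modules are compatible, through the maps $\nu$, with those used to define the $\theta_i$. Checking that the relevant squares commute is where all the care is required; the final cancellation and the appeal to functoriality are then formal.
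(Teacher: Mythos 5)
Your proposal is correct and follows essentially the same route as the paper's proof: your $\Theta$ is the paper's $F^\nu(\theta)$, your key identity $\Theta_i = \nu_{B,\al_i\circ\al}^{-1}\circ F(\theta_i)\circ\nu_{A,\al_i\circ\al}$ is exactly the paper's equation \eqref{*}, and it is established the same way via Lemma~\ref{lem:tran} together with the $\ta_i$-identifications, followed by the same cancellation $\nu_{B,\al_1\circ\al}=\nu_{B,\al_2\circ\al}$ and the functoriality step $F(\theta_2)^{-1}\circ F(\theta_1)=F(u)$. The only cosmetic difference is that where you invoke Lemma~\ref{lem:tran}(b) plus $S''$-linearity to identify the composite with $\nu_{A,\al_i\circ\al}$, the paper verifies the same fact by an explicit element chase along its large commutative diagram.
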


\begin{proof} We fix an $S$-algebra isomorphism $\theta \co  A \ot_{\al}S \to B \ot_{\al} S$.  The cocycle determining $B$ (up to $R$-algebra isomorphism) is then $u=\theta_2^{-1} \circ \theta_1 \in \Aut_{S''}(A \ot_R S'')$. The result to establish can thus be rephrased by saying that
\begin{enumerate}
\item[(a)]
$ z:= \big( \nu_{A, \al_2 \circ \al}^{-1} \circ F(\theta_2)^{-1} \circ \nu_{B, \al_2 \circ \al} \big) \, \circ \, \big( \nu_{B,\al_1 \circ \al}^{-1} \circ F(\theta_1) \circ \nu_{A, \al_1\circ \al}\big) \in {\rm Aut}_{S''}\big(F_R(A) \ot_R  S''\big)$ is a cocycle.

\item[(b)]  The $R$-module determined by the cocycle $z$ is isomorphic to $F_R(B)$. \end{enumerate}
Let us define $F^\nu(\theta) = \nu_{B,\al}^{-1} \circ F(\theta) \circ \nu_{A, \al}$ by means of the diagram
\[ \xymatrix@C=50pt{ F_R(A) \ot_\al S \ar[r]^{F^\nu(\theta)}
     \ar[d]_{\nu_{A, \al}} & F_R(B) \ot_\al S \\
   F_S(A\ot_\al S) \ar[r]^{F(\theta)} & F_S(B \ot_\al S)
       \ar[u]_{(\nu_{B,\al})^{-1}}
 } \]
In view of the descent of modules construction explained in  \ref{descent}, to prove (a) and (b) it will suffice to show that
\begin{equation}\label{*}
\nu_{B, \al_i \circ \al}^{-1} \circ F(\theta_i) \circ \nu_{A, \al_i \circ \al} = \big(F^\nu(\theta)\big)_i \quad \hbox{for $i=1,2$.}\end{equation}
Both cases $i=1,2$ are similar and we check in detail the case $i =1$ only. We will use the following commutative diagram of $S''$-module isomorphisms.
\[ \xymatrix@C=60pt@R=30pt{
  F_R(A) \ot_{\al_1 \circ  \al } S''  \ar[r]^{\big(F^\nu(\theta)\big)_1}
        \ar[d]_{(    \Id_{F_R(A)} \ot \ta_1)^{-1}} \ar@/_6pc/@{.>}[dddd]_{\nu_{A,\al_1\circ \al}}
                 &    F_R(B) \ot_{\al_1 \circ  \al} S''
                    \ar[d]^{( \Id_{F_R(B)} \ot \ta_1)^{-1}}
           \ar@/^6pc/@{.>}[dddd]^{\nu_{B,\al_1\circ \al}}
     \\%% second row starts here
    F_R(A) \ot _\al S \ot_{\al_1} S'' \ar[r]^{ F^\nu(\theta) \ot \Id_{S''}}
       \ar[d]_{\nu_{A,\al} \ot \Id_{S''}}
        &  F_R(B) \ot_\al S\ot_{\al_1} S'' \ar[d]^{\nu_{B,\al} \ot \Id_{S''}}
     \\%% 3rd row starts here
    F_S(A \ot_\al S) \ot_{\al_1} S'' \ar[r]^{ F(\theta) \ot \Id_{S''}}
       \ar[d]_{\nu_{A\ot_\al S, \al_1}}
         & F_S(B\ot_\al S) \ot_{\al_1} S''   \ar[d]^{\nu_{B \ot_\al S, \al_1}}
      \\
     F_{S'' }\big( (A \ot_\al S) \ot_{\al_1} S''\big)
       \ar[r]^{F( \theta \ot_{\al_1} \Id_{S''})} \ar[d]_{F(\ta_1^A)}
       & F_{S''}\big((B \ot_\al S) \ot_{\al_1} S''\big)
          \ar[d]^{F( \ta_1^B) }
      \\
      F_{S''}(A \ot_{\al_1\circ \al} S) \ar[r]^{F(\theta_1)}
       & F_{S''}(B \ot_{\al_1 \circ \al} S'')
        } \]
The top rectangle commutes by definition of $F^\nu(\theta_1)$; the second rectangle commutes  by applying the base change $\al_1\co  S \to S''$ to the diagram defining $F^\nu(\theta)$; the third rectangle commutes by Lemma~\ref{lem:tran}(a) for $(R,A), (S,B), (\al , f)$ and $\be$ replaced by $(S, A\ot_\al S), (S, B\ot_\al S), (\Id_S, \theta)$ and $\Id_{S''}$ respectively; the bottom rectangle commutes by applying $F$ to the diagram~\eqref{def:thet1} defining $\theta_1$. For the proof of \eqref{*} it is therefore sufficient to show that the dotted maps equal $\nu_{A, \al_1 \circ \al}$ and $\nu_{B, \al_1\circ \al}$ respectively. We check the case of $\nu_{A, \al_1 \circ \al}$ by following explicitly the  arrows on the left of the diagram. The case of  $\nu_{B, \al_1\circ \al}$, which is similar,  is left to the reader.
  Let $m\in F_R(A)$ and $s_1, s_2 \in S$. Then
\begin{align*}
   m \ot s_1 \ot s_2 &\xrightarrow{(\Id_{F_R(A)} \ot \ta_1)^{-1}}
      m\ot s_1  \ot 1_S \ot s_2
     \xrightarrow{\nu_{A,\al} \ot \Id_{S''}}
   \big( s_1 F_\al(\al^A)(m) \big) \ot (1_S \ot s_2)
  \\ & \xrightarrow{\nu_{A\ot_\al S, \al_1}}
       (1_S \ot s_2)  \Big( F_{\al_1}(\al_1^{A\ot_{\al} S})
           \big(s_1 F_\al(\al^A)(m)\big) \Big)
 \\ & \qquad \qquad  = (1_S \ot s_2) \Big(  \al_1(s_1) F_{\al_1}(\al_1^{A\ot_{\al} S})
               \big( F_\al(\al^A)(m)\big)\Big)
    \\ & \qquad \qquad = (1_S \ot s_2) (s_1 \ot 1_S)\Big(  F_{\al_1}(\al_1^{A\ot_{\al} S}) \big( F_\al(\al^A)(m)\big)\Big)
  \\ & \qquad \qquad
     = (s_1 \ot s_2) \Big(  F_{\al_1}(\al_1^{A\ot_{\al} S})
               \big( F_\al(\al^A)(m)\big)\Big)
  \\ & \quad \xrightarrow{ F(\ta_1^A)}
      (s_1 \ot s_2) F(\ta_1^A)\Big(  F_{\al_1 }(\al_1^{A\ot_{\al} S})
               \big( F_\al(\al^A)(m)\big)\Big)
 \\& \qquad \qquad
    = (s_1 \ot s_2)\,  F_{\al_1 \circ \al}\big( (\al_1 \circ \al)^A(m)\big).
     \end{align*}
This completes the proof since by definition $ \nu_{A,\al_1\circ \al}(m \ot s_1 \ot s_2) = (s_1 \ot s_2)\,  F_{\al_1 \circ \al}\big( (\al_1 \circ \al)^A(m)\big)$. \end{proof}

\begin{rem}\label{over} In the case that we are most interested in, the  $R$-algebra $A$ is of the form $A =  \fra \ot_k R$ for some $k$-algebra $\fra$. The isomorphism $\theta$ can now be thought as an $S$-algebra isomorphism  (also denoted by $\theta$)
$$\theta \co  \fra \ot_k S \simeq (\fra \ot_k R) \ot_R S = A \ot_R S  \to B \ot_R S$$
where we have denoted $\ot_{\al}$ by $\ot_R$. Under the canonical $S''$-isomorphism $\fra \ot_k S'' \simeq (\fra \ot_k R) \ot_R S''$ (where we recall that $S''$ is viewed as an $R$-algebra via $\al_1 \circ \al = \al_2 \circ \al$) we can view the corresponding  cocycle $u$ as an $S''$-algebra automorphism of $\fra \ot_k S''$. The descended module $L$ is then given by
\begin{equation}\label{desformk}
L = \{ x \in \fra\ot_k S : u\big((\Id_\fra \ot \al_1)(x)\big) = (\Id_\fra \ot \al_2)(x)\}.
\end{equation}

For future reference we record the following ``descent setting" that covers the case that we are most interested in:
\begin{equation}\label{descent-setting}
   \begin{cases}
     \hbox{$\fra$ is an algebra  over $k$;}\\
     \hbox{$R\in \kalg$ is such that $R/k$ is flat;}\\
      \hbox{$S\in \Ralg$ is such that $S/R$ is faithfully flat;}\\
      \hbox{$B$ is an $R$-algebra which is an $S/R$-form of $A = \fra \ot_k R$.}\\
        \end{cases}
\end{equation}
\end{rem}

%%%%%%%%%%%%%%%%%%%%%%%%%%%%%%%%%%%%%%%%%%%%%%%%%%%%%%%%%%%%%%%%%%%%%
%%%%%%     begin of section 3 on generalities  of inv functions %%%%%%%%%%%%%%
%%%%%%%%%%%%%%%%%%%%%%%%%%%%%%%%%%%%%%%%%%%%%%%%%%%%%%%%%%%%%%

\section{Invariant functions}\label{invfunsec}

Unless stated otherwise, $k$ is a commutative associative unital ring, $R$ is a commutative associative unital $k$-algebra, namely $R\in \kalg$ in the notation of \S\ref{sec:app}, $M$ is an $R$-module, $V$ is a $k$-module and $B$ is an arbitrary (not necessarily associative, unital...) $R$-algebra, i.e., $(R,B) \in \kALG$.
Our goal is to study invariant bilinear functions $B\times B \to V$. We begin with pertinent definitions.

\begin{defn}[$(R,k)$-bilinear functions]\label{Rk-bil}
A $k$-bilinear function $\be\co M \times M \to V$ is called \emph{$(R,k)$-bilinear} if $\be(rm_1  , m_2) = \be(m_1, r m_2)$ holds for all $m_i \in M$ and $r\in R$. We denote by $\scL^2_{(R,k)}(M; V)$ the $R$-module of $(R,k)$-bilinear maps $M \times M \to V$. Its $R$-module structure is given by $(r\be)(m_1, m_2) = \be(rm_1, m_2)$ for $r\in R$ and $\be \in \scL^2_{(R,k)}(M; V)$. We abbreviate $\scL^2_R(M) = \scL^2_{(R,R)}(M;R)$. \sm

%\begin{rem}\label{obvious}
It is immediate from \cite[II, \S4.1]{bou:Aa} that one has a commutative triangle \begin{equation} \label{Rk-bil1}
   \vcenter{
\xymatrix{
      \Hom_k(M\ot_R M,V) \ar[rr]^{\sfx}_\simeq
             \ar[dr]^\simeq_{\sfy} && \scL^2_{(R,k)}(M;V)
                \\
            & \Hom_R\big(M, \Hom_k(M,V)\big) \ar[ur]_{\sfz}^\simeq
}}\end{equation}
of $R$-linear isomorphisms: For $\vphi \in \Hom_k(M \ot_R M, V)$ one defines $\big(\sfx(\vphi)\big)(m_1, m_2) = \vphi(m_1 \ot m_2)$. This is an $R$-linear map with respect to the $R$-action on $\Hom_k(M \ot_R M, V)$ given by $(r \vphi)(m_1\ot m_2) = \vphi(rm_1 \ot m_2)$ for $r\in R$. Also, $\big(\sfy(\vphi)\big)(m_1)$ maps $m_2$ onto $\vphi(m_1 \ot m_2)$. The map $\sfy$ is $R$-linear if we view $\Hom_k(M,V)$ as an  $R$-module via $(rh)(m)= h(rm)$ for $h\in \Hom_k(M,V)$. To $\be \in \Hom_R\big(M, \Hom_k(M,V)\big)$ one associates the $(R,k)$-bilinear function $\sfz(\be) $ defined by $\big(\sfz(\be)\big)(m_1, m_2) = \big(\be(m_1)\big)(m_2)$.
%\end{rem}
 \sm

One calls $\be \in \scL^2_{(R,k)}(M;k)$ \textit{nondegenerate} (respectively \textit{nonsingular}) if $\sfz^{-1}(\be) \in  \Hom_R(M,M^*)$, $M^* = \Hom_k(M,k)$, is injective (respectively bijective).\footnote{The asymmetry in these definitions (one should, strictly speaking, speak of left and right nondegeneracy and nonsingularity) will not play a major role in this paper since our main interest later will be in invariant bilinear forms of perfect algebras which, by Remark~\ref{rem:perfect}, are symmetric.} In more familiar terms $\be \in \scL^2_{(R,k)}(M;k)$ is nondegenerate if and only if $\be(b,M) = 0$ implies $b=0$. If $k$ is a field, the existence of a nonsingular bilinear form on a $k$-algebra $B$ forces $B$ to be of finite dimension. Moreover, for a finite-dimensional $B$ a form is nondegenerate if and only if it is nonsingular. See Lemma~\ref{ns-char} for a generalization.
\end{defn}

\begin{definition}[Invariant functions] \label{def:inb} We call $\beta \in \scL^2_{(R,k)}(B;V)$ \emph{invariant} if
\begin{equation} \label{def:inb1}
  \be(ab,c) = \be(a, bc) = \be(b,ca)
\end{equation}
holds for all $a,b,c\in B$.

Clearly, the set $\IBF_{(R,k)}(B;V)$ of all invariant $(R,k)$-bilinear functions $B\times B \to V$ is a submodule of the $R$-module $\scL^2_{(R,k)}(B;V)$. The following special cases of $\IBF_{(R,k)}(B; V)$ are of particular interest: \begin{align*}
  \IBF_k(B;V) :&= \IBF_{(k,k)}(B; V), \\
  \IBF_{(R,k)}(B) :&= \IBF_{(R,k)}(B;k),\\
    \IBF_k(B) :&= \IBF_k(B;k) = \IBF_{(k,k)}(B). \\
    \end{align*}
The elements of  $\IBF_k(B)$ are called \emph{invariant $k$-bilinear forms}. Of particular importance is the case $k=R$; these are the invariant $R$-bilinear forms on $B$.
\end{definition}

\begin{rem}
The above definition works equally well for invariant bilinear functions on dimodules of algebras (for the definition of a dimodule see \cite{NP} as well as Lemma~\ref{inv-cent} supra). This is not without interest as these types of bilinear forms are an important tool for the study of the representation theory of the algebras in question, for example for the existence of the Jantzen filtration of Verma modules. We will not pursue this more general set up in this work. \end{rem}

\begin{rem}\label{rem:perfect}
If $B$ is perfect, namely if $B=BB$, where $BB=\Span_R\{ ab: a,b,\in B\}$, every invariant bilinear function is symmetric: $\be(ab,c) = \be(a,bc) = \be(b,ca) = \be(bc,a) = \be(c,ab)$. Moreover, any $k$-bilinear function is already $(R,k)$-bilinear:
\[
   \IBF_k(B;V)  = \IBF_{(R,k)}(B;V) \qquad (\hbox{$B$ perfect}).
\]
Indeed, $\be\big(r(ab), c\big) = \be\big( a (rb), c\big) = \be\big(a, (rb)c\big) = \be\big( a, b(rc)\big) =\be\big( ab, rc\big)$.
\end{rem}

\begin{definition}[Universal invariant function] \label{def:repr}Let $\scIBF_R(B)$ be the quotient of the $R$-module $B \ot_R B$ by the submodule
\begin{align}
 \sfibf_R(B) &= \Span_R\{ ab \ot c - a \ot bc , \, ab\ot c - b \ot ca : a,b,c\in B\} \nonumber \\
    &= \Span_\ZZ \{ ab \ot c - a \ot bc , \, ab\ot c - b \ot ca : a,b,c\in B\} \nonumber \\
   & = \Span_\ZZ\{ ab \ot c - a \ot bc, \, a\ot bc - bc \ot a : a,b,c \in B\}  \label{def:repr-0},
 \end{align}
where the last equality follows from
$ (bc \ot a - a \ot bc) + (ab \ot c - b \ot ca)
     = (ab \ot c - a \ot bc) + (bc \ot a - b \ot ca).
$ Denoting by $i_B \co \sfibf_R(B) \to B\ot_R B$ the inclusion and by $q_B$ the canonical quotient map
\[
   q_B\co B \otimes_R B \to \scIBF_R(B), \quad
    a \otimes b \mapsto \overline{a \ot b},
\]
we  have an exact sequence of $R$-modules
\begin{equation}\label{def:rep-ex}
  0 \to \sfibf_R(B)\xrightarrow{i_B} B \ot_R B \xrightarrow{q_B} \scIBF_R(B)
   \to 0 .
\end{equation}
We define an invariant $R$-bilinear function
\begin{equation} \label{def:rep-ex2}
 \be_{\scu}\co B \times B \to \scIBF_R(B), \quad
    \be_\scu(a, b) =  \overline{a\ot b},
\end{equation}
called the \emph{universal invariant $R$-bilinear function}.\footnote{We want to thank K.-H.\ Neeb for bringing this concept to our attention.} This terminology is justified  because of the following natural $R$-module isomorphism
\begin{equation} \label{iso-hom-ibf-1}
 \Hom_k( \scIBF_R(B), V) \xrightarrow{\simeq} \IBF_{(R,k)}(B; V),  \quad     f \mapsto \tilde f =f \circ \beta_\scu.
\end{equation}
Its inverse
\begin{equation} \label{iso-hom-ibf-2}
\IBF_{(R,k)}(B; V) \xrightarrow{\simeq} \Hom_k( \scIBF_R(B), V), \quad     \be \mapsto \bar \be
\end{equation}
assigns to $\be \in \IBF_{(R,k)}(B, V)$ the unique $k$-linear map \begin{equation}\label{def:betab}
\bar \be \co \scIBF_R(B) \to V, \quad
 \bar\be(\overline{a\ot b})=\be(a,b).\end{equation}

In other words, $\scIBF_R(B)$ represents the obvious functor $\IBF(B; -) \co \kmod \to \Rmod$. The isomorphism \eqref{iso-hom-ibf-1} determines $\scIBF_R(B)$ up to a unique $k$-linear isomorphism.
We will describe $\scIBF_R(B)$ for several cases of interest in this paper. The most important  situation is captured by the following result that we state in the form of a principle.
\end{definition}

\begin{princ}\label{princip} Assume $B$ is an $R$-algebra for some $R\in \kalg$. Let $\be \in \IBF_R(B)$ be such that the induced map $\bar \be \co \scIBF_R(B) \to R$, $\overline{b_1 \ot b_2} \mapsto \be(b_1, b_2)$ is an $R$-module isomorphism.
Then for any $k$-module $V$ the map
\begin{equation} \label{princip1}
 \Hom_k(R, V) \to \IBF_{(R,k)}(B;V), \quad \vphi \mapsto \vphi\circ \be\end{equation}
is an isomorphism of $R$-modules. In particular, \begin{itemize} \item[\rm (a)] the map $R^* = \Hom_k(R, k) \to \IBF_{(R,k)}(B)$, $\vphi \mapsto \vphi \circ \be$ is an isomorphism, and

\item[\rm (b)] every $\ga\in \IBF_R(B)$ is of the form
     $\ga = r\be$ for a unique $r\in R$. \end{itemize}
\end{princ}

\begin{proof} This follows from the isomorphism \eqref{iso-hom-ibf-1}  and the equality $\be = \bar \be \circ \be_\scu$. \end{proof}

We will say that $(B,\be)$ satisfies the \emph{IBF-principle} if the assumptions and hence also the conclusions of \ref{princip} hold. Note that in this case we have a precise and explicit description of {\it all} invariant $(R,k)$-bilinear functions on $B$.\ms

\centerline{***}

\medskip

While the connection between invariant bilinear forms and centroids does not feature prominently in this paper, it has nevertheless been an important guiding principle for our work. Besides the conceptual importance of this connection, another reason for elaborating on it is Corollary~\ref{inv-cent} relating invariant forms and the centroid of an algebra. Not only will this provide the reader with a means to determine the module $\IBF_{(R,k)}(B)$, but it will also be useful in \S\ref{applisec2} when we will be looking at algebras with a ``$1$-dimensional" $\IBF_{(R,k)}(B)$.

In preparation for these results, we first present the necessary background. Using the terminology of \cite{NP}, we recall that a $(B,R)$-{\it dimodule} is an $R$-module $M$ together with $R$-bilinear maps $B \times M \to M$ and $M \times B \to M$. For example, $B$ itself is a $(B,R)$-dimodule with respect to the left and right multiplications of the algebra $B$, called the \textit{regular dimodule} and denoted $B_{\rm reg}$. Also the $R$-module $\Hom_k(B,V)$ is a $(B,R)$-dimodule with respect to the $B$-actions $\big(b_1 \cdot \vphi\big)(b_2) =  \vphi(b_2 b_1) = \big(\vphi\cdot b_2 \big)(b_1)$. For any $(B,R)$-dimodule $M$ the {\it centroid of $B$ with values in} $M$ is \[
   \cent_R(B,M) = \{ \chi \in \Hom_R(B,M) : \chi(b_1 b_2) = b_1 \cdot \chi(b_2) = \chi(b_1) \cdot b_2 \hbox{ for all } b_1, b_2\in B\}.
\]
Taking as $M$ the regular dimodule $B_{\rm reg}$, we recover the usual notion of the centroid of $B$: $\cent_R(B) = \cent_R(B, B_{\rm reg})$. We note that  $\cent_R(B)$ is isomorphic to the usual centre if $B$ is a unital algebra, where, we recall, the centre of an arbitrary algebra $B$ consists of those $c\in B$ which commute with all $b\in B$, i.e. $cb=bc$, and which associate with all $b_1, b_2\in B$, i.e., $(c,b_1, b_2)=(b_1, c, b_2) = (b_1, b_2, c) = 0$ where $(x,y,z)= (xy)z -x(yz)$.
There exists a natural map $R \to \cent_R(B)$ given by $r \mapsto \chi_r$ where $\chi_r(b) = rb$ for all $b \in B$.
We call $B$ a \textit{central} $R$-algebra if this map is an isomorphism. In this situation we will often identify $R$ with $\cent_R(B)$ without any further explicit reference.

We can now describe the connection between invariant functions and centroids.
\begin{lem} \label{inv-cent}
The restriction of the $R$-isomorphism $\sfz$ of \eqref{Rk-bil1}
to $\Cent_R\big(B,\Hom_k(B,V)\big)$ induces an $R$-linear isomorphism $\Cent_R(B,\Hom_k(B,V)) \simeq \IBF_{(R,k)}(B;V)$. In particular,
\begin{equation*} \label{inv-cent1}
   \cent_R(B,B^*) \simeq \IBF_{(R,k)}(B).
\end{equation*} \end{lem}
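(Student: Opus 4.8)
The plan is to leverage the isomorphism $\sfz$ from the triangle \eqref{Rk-bil1}, taken with $M=B$. By that triangle, $\sfz$ is already an $R$-linear isomorphism from $\Hom_R\big(B,\Hom_k(B,V)\big)$ onto the \emph{full} module $\scL^2_{(R,k)}(B;V)$ of $(R,k)$-bilinear functions, given by $\big(\sfz(\chi)\big)(a,c) = \big(\chi(a)\big)(c)$. Consequently nothing needs to be proved about bijectivity: it suffices to check that $\sfz$ carries the submodule $\Cent_R\big(B,\Hom_k(B,V)\big)$ onto the submodule $\IBF_{(R,k)}(B;V)$, for then its restriction is automatically an $R$-linear isomorphism between these submodules. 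So the whole content of the lemma reduces to a translation of defining conditions under $\sfz$.

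First I would fix $\chi \in \Hom_R\big(B,\Hom_k(B,V)\big)$, set $\be = \sfz(\chi)$, and unwind the two defining equations of the centroid. The dimodule structure on $\Hom_k(B,V)$ spelled out before the lemma reads, in generic form, $(b\cdot\vphi)(x) = \vphi(xb)$ for the left action and $(\vphi\cdot b)(x) = \vphi(bx)$ for the right action. Evaluating the centroid condition $\chi(b_1 b_2) = b_1\cdot\chi(b_2)$ at an arbitrary $c\in B$ and inserting the left action gives $\be(b_1 b_2,c) = \be(b_2, c b_1)$; evaluating $\chi(b_1 b_2) = \chi(b_1)\cdot b_2$ at $c$ and inserting the right action gives $\be(b_1 b_2,c) = \be(b_1, b_2 c)$.

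Comparing with the invariance identity \eqref{def:inb1}, namely $\be(ab,c) = \be(a,bc) = \be(b,ca)$, I would then simply observe that (with $b_1,b_2,c$ renamed $a,b,c$) the second centroid equation is exactly $\be(ab,c) = \be(a,bc)$ and the first is exactly $\be(ab,c) = \be(b,ca)$. Hence the conjunction of the two centroid conditions on $\chi$ holds if and only if $\be = \sfz(\chi)$ is invariant. This shows $\sfz$ restricts to an $R$-linear isomorphism $\Cent_R\big(B,\Hom_k(B,V)\big) \simeq \IBF_{(R,k)}(B;V)$, proving the first assertion. The displayed special case $\cent_R(B,B^*) \simeq \IBF_{(R,k)}(B)$ is then just the instance $V=k$, since $B^* = \Hom_k(B,k)$ and $\IBF_{(R,k)}(B;k) = \IBF_{(R,k)}(B)$ by definition.

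I expect the only delicate point to be pure bookkeeping: keeping the left and right dimodule actions distinct and lining up the variables $b_1,b_2,c$ against $a,b,c$ so that the two centroid equations land on the two invariance equations in the correct order (the left action producing the ``$\be(b,ca)$'' cyclic relation and the right action producing the ``$\be(a,bc)$'' relation). There is no deeper obstacle, because injectivity and surjectivity of the restricted map are inherited for free from the fact that $\sfz$ is already a bijection on the ambient modules, so the argument is complete once the two equivalences of conditions are established.
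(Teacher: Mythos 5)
Your proof is correct and coincides with the paper's intended argument: the paper's own proof consists of the single line that the lemma ``is a straightforward consequence of the various definitions,'' and your explicit unwinding---showing via $\sfz$ that the centroid equation with the right action $(\vphi\cdot b)(x)=\vphi(bx)$ yields $\be(ab,c)=\be(a,bc)$ while the one with the left action $(b\cdot\vphi)(x)=\vphi(xb)$ yields $\be(ab,c)=\be(b,ca)$, matching \eqref{def:inb1}, with bijectivity inherited from $\sfz$ on the ambient modules---is exactly that verification made explicit.
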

\begin{proof} The result is a straightforward consequence of the various definitions.
\end{proof}

\begin{cor} \label{inv-cent-co}
Assume $\be\in \IBF_{(R,k)}(B)$ is nonsingular. Then
\[ \cent_R(B) \simeq \IBF_{(R,k)}(B) \qquad (\hbox{isomorphism of
$R$-modules}).\]
If furthermore $B$ is a central $R$-algebra, then $\IBF_{(R,k)}(B)$ is a free  $R$-module of rank 1 admitting $\beta$ as a basis:
\begin{equation}\label{inv-cent-co1}
  \IBF_{(R,k)}(B) = R \beta \simeq R.
\end{equation}
\end{cor}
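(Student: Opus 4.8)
The plan is to deduce Corollary~\ref{inv-cent-co} directly from Lemma~\ref{inv-cent} and the nonsingularity hypothesis, treating the two assertions in turn. First I would establish the isomorphism $\cent_R(B) \simeq \IBF_{(R,k)}(B)$. By Lemma~\ref{inv-cent} (taking $V = k$) we already have $\cent_R(B, B^*) \simeq \IBF_{(R,k)}(B)$, where $B^* = \Hom_k(B,k)$. So it suffices to produce an $R$-linear isomorphism $\cent_R(B) = \cent_R(B, B_{\mathrm{reg}}) \simeq \cent_R(B, B^*)$. The natural candidate is the map induced by the dimodule homomorphism $B_{\mathrm{reg}} \to B^*$, $b \mapsto \sfz^{-1}(\be)(b)$, i.e. $b \mapsto \be(b, -)$. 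Because $\be$ is invariant and nonsingular, $\sfz^{-1}(\be)\in \Hom_R(B, B^*)$ is an \emph{isomorphism} of $(B,R)$-dimodules (nonsingularity gives bijectivity; invariance gives that it intertwines the two $B$-actions). Composition with a dimodule isomorphism carries $\cent_R(B, B_{\mathrm{reg}})$ bijectively onto $\cent_R(B, B^*)$, which yields the desired isomorphism.

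For the second assertion, I would add the hypothesis that $B$ is central, so that the canonical map $R \to \cent_R(B)$, $r \mapsto \chi_r$ (with $\chi_r(b) = rb$), is an isomorphism. Composing the three isomorphisms
\[
   R \xrightarrow{\ \simeq\ } \cent_R(B) \xrightarrow{\ \simeq\ } \cent_R(B, B^*) \xrightarrow{\ \simeq\ } \IBF_{(R,k)}(B)
\]
gives an $R$-module isomorphism $R \simeq \IBF_{(R,k)}(B)$, so the latter is free of rank $1$. It then remains only to identify the image of $1 \in R$ as $\be$ itself. Tracing $1 \mapsto \chi_1 = \Id_B \mapsto (b \mapsto \be(b,-)) \mapsto \sfz(\be(\cdot,-)) = \be$ through the chain shows that $\be$ is a basis, whence $\IBF_{(R,k)}(B) = R\be \simeq R$.

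The main obstacle, and the only point requiring genuine care, is the first step: verifying that $\sfz^{-1}(\be)\co B \to B^*$ is an isomorphism \emph{of dimodules} and that post-composition with it really restricts to a bijection on centroids. Concretely, one must check that for $\chi \in \Hom_R(B, B_{\mathrm{reg}})$ the composite $\sfz^{-1}(\be)\circ \chi$ lies in $\cent_R(B, B^*)$ precisely when $\chi \in \cent_R(B)$; this is where the invariance identity $\be(ab,c) = \be(a,bc) = \be(b,ca)$ is used to match the left and right $B$-actions on $B_{\mathrm{reg}}$ with those on $B^* = \Hom_k(B,k)$ defined by $(b_1\cdot\vphi)(b_2) = \vphi(b_2 b_1) = (\vphi\cdot b_2)(b_1)$. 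Once this dimodule compatibility is confirmed, the rest is a formal composition of isomorphisms, and the identification of the basis element is a direct unwinding of the maps $\sfz$ and $\sfz^{-1}$ from \eqref{Rk-bil1}. I would keep the verification brief, since it is, as the authors note in Lemma~\ref{inv-cent}, ``a straightforward consequence of the various definitions.''
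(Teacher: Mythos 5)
Your proposal is correct and takes essentially the same route as the paper's proof: nonsingularity plus invariance make $\chi_\be = \sfz^{-1}(\be)$ a dimodule isomorphism $B_{\rm reg} \simeq B^*$, which (via Lemma~\ref{inv-cent}) transfers centroids to give $\IBF_{(R,k)}(B) \simeq \cent_R(B,B^*) \simeq \cent_R(B)$, with centrality then supplying $\cent_R(B) \simeq R$. The only cosmetic difference is the direction in which the basis element is traced: you follow $1_R \mapsto \Id_B \mapsto \sfz^{-1}(\be) \mapsto \be$, while the paper follows $\be \mapsto \chi_\be \mapsto \chi_\be^{-1}\circ\chi_\be = \Id_B \mapsto 1_R$.
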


\begin{proof} By assumption  $\chi_\beta = \sfz^{-1}(\be)\in \Cent_R(B, B^*)$ is an isomorphism of $R$-modules. The fact that $\chi_\beta$ is centroidal amounts to saying that $B_{\rm reg} \simeq B^*$ as dimodules, whence $\IBF_{(R,k)}(B) \simeq \cent_R(B,B^*) \simeq\cent_R(B)\simeq R$. Via these isomorphism we have $\be \mapsto \chi_\be\mapsto \chi_\be^{-1}\circ \chi_\be=\Id_B \mapsto 1_R$. Since $1_R$ is a basis of $R$, $\be$ is a basis of $\IBF_{(R,k)}(B)$.\end{proof}

As we will see in \S\ref{applisec2}, there are many natural types of algebras satisfying the assumptions of Corollary~\ref{inv-cent-co}. The following result will allow us to transition from the general setting to the specific examples.

\begin{prop}\label{trans} Assume that $B$ is a central  $R$-algebra, that $\be \in \IBF_R(B)$ is nonsingular with $\be(B,B) = R$, and that $\scIBF_R(B)$ is projective. Then the IBF-principle~{\rm \ref{princip}} holds for\/ $(B,\beta)$.
\end{prop}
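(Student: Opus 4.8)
The plan is to verify the single hypothesis of the IBF-principle~\ref{princip} for $(B,\be)$, namely that the induced map $\bar\be \co \scIBF_R(B) \to R$, $\overline{b_1 \ot b_2} \mapsto \be(b_1,b_2)$, is an $R$-module isomorphism; once this is established, all the asserted conclusions follow verbatim from \ref{princip}. Write $P = \scIBF_R(B)$, a projective $R$-module by assumption. Surjectivity of $\bar\be$ is immediate: its image is the $R$-submodule of $R$ spanned by the elements $\be(b_1,b_2)$, which is exactly $\be(B,B) = R$ by hypothesis. So the real work is injectivity.

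The key preliminary step I would carry out is to identify the dual $P^{*} = \Hom_R(P,R)$ explicitly. Since $\be$ is nonsingular and $B$ is a central $R$-algebra, Corollary~\ref{inv-cent-co} (with the base ring $k$ taken to be $R$) shows that $\IBF_R(B) = \IBF_{(R,R)}(B;R) = R\be \cong R$ is free of rank one with basis $\be$. Transporting this along the representing isomorphism \eqref{iso-hom-ibf-1}, taken with $V = k = R$, under which $\be$ corresponds precisely to $\bar\be$, I conclude that $P^{*} = R\,\bar\be$ is free of rank one with basis $\bar\be$.

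For injectivity I would set $K = \Ker(\bar\be)$ and exploit that $R$ is free, so the surjection $\bar\be$ splits and $P = K \oplus s(R)$ for a section $s$; dually this gives $P^{*} = K^{*} \oplus s(R)^{*}$, the projection $P^{*} \to K^{*}$ being restriction of functionals to $K$. Now every element of $P^{*} = R\,\bar\be$ restricts to $0$ on $K$, because $\bar\be|_K = 0$ by definition of $K$; hence this restriction map is the zero map, and being also surjective it forces $K^{*} = 0$. Since $K$ is a direct summand of the projective module $P$, it is itself projective, and a \emph{nonzero} projective module always carries a nonzero functional (realize it as a summand of a free module and restrict a coordinate functional). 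Therefore $K^{*} = 0$ yields $K = 0$, so $\bar\be$ is injective and thus an isomorphism.

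The step I expect to be the main obstacle is this last one: surjectivity and the computation of $P^{*}$ are essentially bookkeeping, but deducing $K = 0$ genuinely needs the projectivity hypothesis on $\scIBF_R(B)$. Projectivity is exactly what converts the vanishing $K^{*} = 0$ — which the split decomposition of $P^{*}$ forces — into $K = 0$; without it (for instance for a torsion summand) the argument would break down, which is why that hypothesis is indispensable here.
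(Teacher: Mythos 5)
Your proposal is correct and takes essentially the same route as the paper's proof: both verify surjectivity of $\bar\be$ from $\be(B,B)=R$, use Corollary~\ref{inv-cent-co} (with $k=R$) together with the representing isomorphism \eqref{iso-hom-ibf-1} to identify $\Hom_R(\scIBF_R(B),R)$ as the free rank-one module $R\,\bar\be$, split off $K=\Ker\bar\be$ and conclude $\Hom_R(K,R)=0$, then invoke projectivity of $K$ to get $K=0$. Your final step (a nonzero projective module carries a nonzero functional) is merely a rephrasing of the paper's observation that the canonical map $K \to \Hom_R\big(\Hom_R(K,R),R\big)$ is injective for projective $K$.
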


\begin{proof} By assumption $\bar \be$ is surjective. Denoting by $K$ the kernel of $\bar \be$, we obtain a split exact sequence
\[ 0 \to K \to \scIBF_R(B) \xrightarrow{\bar \be} R \to 0 \]
of $R$-modules and consequently a split exact sequence
\[
   0 \to \Hom_R(R,R) \xrightarrow{\hat \be} \Hom_R(\scIBF_R(B), R) \to \Hom_R(K,R) \to 0 \]
with $\hat \be (r\Id_R) = r\Id_R \circ \bar \be = r\bar \be$ for $r\in R$. The $R$-module isomorphism \eqref{iso-hom-ibf-1} sends $\bar \be$ to $\be$. We also know that $\IBF_R(B) \simeq R$ by Corollary~\ref{inv-cent-co} (applied to the case $k = R$). Hence the diagram
\[
    \xymatrix{ \Hom_R(R, R) \ar[r]^>>>>>{\hat \be}
        & \Hom_R(\scIBF_R(B), R) \ar[d]^\simeq \\
      R \ar[u]_\simeq \ar[r]^\simeq & \IBF_R(B)  }
      \qquad \qquad
      \xymatrix{ r\Id_R \ar@{|->}[r] & r\bar \be \ar@{|->}[d]
            \\ r \ar@{|->}[u] \ar@{|->}[r]
             & r\be    }
\]
commutes, proving that $\hat \be$ is an isomorphism. This in turn forces $K^{(*)}:= \Hom_R(K,R) = \{0 \}$.\footnote{To avoid any possible confusion we use $^{(*)}$ as opposed to $^*$ to denote the $R$-dual given that, by convention, $^*$ always refers to the $k$-dual.} Since $\scIBF_R(B) \simeq K \oplus R$ is projective, so is $K$. But then the canonical map $K \to (K^{(*)})^{(*)}= 0$ is injective. Thus $K= 0$. \end{proof}

%%%%%%%%%%%%%%   end of the general section %%%%%%%%%%%
%%%%%%%%%%%%%%%%%%%%%%%%%%%%%%%%%%%%%%%%%%%%%%%%%%%%%%%%%%%
%%%%%%   section on base change
%%%%%%%%%%%%%%%%%%%%%%%%%%%%%%%%%%%%%%%%%%%%%%%%%%%
\section{Functorial nature of $\scIBF$}\label{sec:fun-IBF}

As in the previous section $k$ will denote a commutative associative unital ring and $R\in \kalg$. The main purpose of this section is to describe the functorial nature of $\scIBF$ and study its behaviour under base change.

Let $(\al, f) \co (R,M) \to (S,N)$ be a morphism in $\kMOD$ (see \ref{def:cate}), i.e., $\al \co R \to S$ is a morphism in $\kalg$ and $f\co M \to N$ is $\al$-semilinear, and suppose $V$ is a $k$-module. Then for any $\ka \in \scL^2_{(S,k)}(N;V)$ the map
\begin{equation}\label{Rk-bil3}
    f^*(\ka) \co M \times M \to V, \quad (m_1, m_2) \mapsto
         \ka\big( f(m_1), f(m_2)\big)
\end{equation}
is $(R,k)$-bilinear.
We obtain in this way a $k$-linear map
\[ f^* \co \scL^2_{(S,k)}(N;V) \to \scL^2_{(R,k)}(M;V), \quad \ka \mapsto f^*(\ka).\]
If $(\be, g) \co (S,N) \to (T,P)$ is another morphism in $\kMOD$, it is immediate that $(g \circ f)^* = f^* \circ g^*$. Observe that this in particular defines a right action of the group $\textrm{GL}_R(M)$ on $ \scL^2_{(R,k)}(M;V)$ (see \ref{rem:bifun} for a functorial version of this observation).

\begin{bas-ch}\label{def:basch}
Let $\ka \co M \times M \to R$ be an $R$-bilinear form and let $\al \co R \to S$ be a morphism in $\kalg$. There exists a unique $S$-bilinear form
$\ka_\al \co  M \ot_\al S  \times M\ot_\al S \to S$ satisfying $\ka_\al (m_1 \ot s_1, m_2 \ot s_2) = \al\big(\ka(m_1, m_2)\big) s_1s_2$. In case $\al$ is clear from the context, we will denote $\ka_\al = \ka_S$ and call $\ka_S$ the {\em base change of $\ka$ by  $S$\/}.   We then have the equation $\ka_S(m_1 \ot s_1, m_2 \ot s_2) = \ka(m_1, m_2) s_1 s_2$.

Base change can also be understood in terms of the isomorphism $\sfx \co \Hom_R(M \ot_R M , R) \xrightarrow{\simeq} \scL^2_R(M)$ of \eqref{Rk-bil1}. Let $\sfx^{-1}(\ka) = \tilde{\ka} \co  M\ot_R M \to R$ be the $R$-linear map associated to $\ka$. Then $\sfx^{-1}(\ka_S) = \widetilde{\ka_S}$ is obtained from $\tilde \ka \ot \Id_S$ with the aid of two canonical $S$-module isomorphisms, namely
 \[ \widetilde{\ka_S} \co  (M\ot_R S)\ot_S(M \ot_R S) \simeq (M \ot_R M) \ot_R S \xrightarrow{\widetilde{\ka}\ot \Id_S}  R\ot_R S \simeq S. \]
The following lemma collects some results using base change. \end{bas-ch}

\begin{lem} \label{form-tr} {\rm (a) (Transitivity of base change)} Let $\ka \in \scL^2_R(M)$, let $R \xrightarrow{\al} S \xrightarrow{\be} T$ be morphisms in $\kalg$, and let $\ze \co (M \ot_\al S) \ot_\be T \to M\ot_{\be \circ \al} T$, $m\ot s\ot t \mapsto m \ot \big(\be (s)\big) t$ be the canonical isomorphism of $T$-modules. Then
\begin{equation}\label{form-tr1} \ze^* \ka_{\be \circ \al} = (\ka_\al)_\be.
\end{equation}

{\rm (b)} Let $M$ and $N$ be $R$-modules, $\la \in \scL^2_R(N)$, $f\co M \to N$ an $R$-linear map and $\al \co R \to S$ a morphism in $\kalg$. Then
\begin{equation}\label{form-tr2}
  \big(f^*(\la)\big)_\al = (f\ot \Id_S)^*(\la_\al).
\end{equation}

{\rm (c)} Let $\ka, \ka' \in \scL^2_R(M)$. Then $\ka = \ka'$ if and only if $\ka_S = \ka'_S$ for some faithfully flat extension $S\in \Ralg$. \sm

{\rm (d)} Assume that  $M$ is finitely presented. Let $\ka \in \scL_R^2(M)$ and let  $S\in \Ralg$ be such that $S$ is a flat $R$-module.
If $\ka$ is nondegenerate (resp.~nonsingular), then $\ka_S$ is nondegenerate (resp.~nonsingular). The converse holds in both cases if $S/R$ is faithfully flat. \end{lem}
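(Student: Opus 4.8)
The plan is to dispatch (a) and (b) by direct evaluation on elementary tensors, to reduce (c) to a standard faithful-flatness vanishing criterion, and to prove (d) via a Hom-tensor isomorphism that hinges on finite presentation. For (a), I would evaluate both sides of $\ze^*\ka_{\be\circ\al} = (\ka_\al)_\be$ on a pair $m_1\ot s_1\ot t_1,\ m_2\ot s_2\ot t_2$ of elementary tensors in $(M\ot_\al S)\ot_\be T$. Using the defining formula for base change from \ref{def:basch} and the formula for $\ze$, the left side becomes $\ka_{\be\circ\al}(m_1\ot\be(s_1)t_1,\, m_2\ot\be(s_2)t_2) = (\be\circ\al)\big(\ka(m_1,m_2)\big)\,\be(s_1)\be(s_2)\,t_1t_2$, while the right side equals $\be\big(\ka_\al(m_1\ot s_1,m_2\ot s_2)\big)\,t_1t_2 = \be\big(\al(\ka(m_1,m_2))s_1s_2\big)\,t_1t_2$, which gives the same value once one uses that $\be$ is a ring homomorphism and $T$ is commutative. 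Since such tensors span, (a) follows; (b) is entirely analogous, with both $\big(f^*(\la)\big)_\al$ and $(f\ot\Id_S)^*(\la_\al)$ evaluating to $\al\big(\la(f(m_1),f(m_2))\big)\,s_1s_2$ on $(m_1\ot s_1, m_2\ot s_2)$.

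For (c), only the implication $\ka_S=\ka'_S \Rightarrow \ka=\ka'$ requires argument. Setting $\de=\ka-\ka'$ and passing to associated $R$-linear maps through the isomorphism $\sfx$ of \eqref{Rk-bil1}, the base-change description in \ref{def:basch} identifies $\sfx^{-1}(\de_S)$ with $\sfx^{-1}(\de)\ot\Id_S$ up to the two canonical $S$-module isomorphisms displayed there. Thus $\de_S=0$ gives $\sfx^{-1}(\de)\ot\Id_S=0$. I would then invoke the standard fact that for faithfully flat $S/R$ an $R$-linear map $g\co P\to R$ vanishes whenever $g\ot\Id_S$ does: by flatness the image $g(P)$ satisfies $g(P)\ot_R S \simeq (g\ot\Id_S)(P\ot_R S)=0$, whence $g(P)=0$ by faithful flatness. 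Applied to $P=M\ot_R M$ this yields $\sfx^{-1}(\de)=0$, so $\de=0$ and $\ka=\ka'$. The converse is immediate since base change is well defined.

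For (d), write $\ka^\sharp:=\sfz^{-1}(\ka)\co M\to \Hom_R(M,R)$ for the $R$-linear adjoint $m\mapsto \ka(m,-)$, so that $\ka$ is nondegenerate (resp.~nonsingular) exactly when $\ka^\sharp$ is injective (resp.~bijective). The crux is the canonical isomorphism $\Hom_R(M,R)\ot_R S \simeq \Hom_S(M\ot_R S,S)$, valid because $M$ is finitely presented and $S$ is flat, together with the identification of $(\ka_S)^\sharp$ with $\ka^\sharp\ot\Id_S$ under it (which follows from the base-change description in \ref{def:basch} via the triangle \eqref{Rk-bil1}). Granting this, flatness of $S$ preserves the injectivity of $\ka^\sharp$, while tensoring preserves its bijectivity, so $\ka_S$ is nondegenerate (resp.~nonsingular) whenever $\ka$ is. For the converse under faithful flatness I would use $\Ker(\ka^\sharp)\ot_R S = \Ker(\ka^\sharp\ot\Id_S)$ and $\Coker(\ka^\sharp)\ot_R S = \Coker(\ka^\sharp\ot\Id_S)$: nondegeneracy of $\ka_S$ forces $\Ker(\ka^\sharp)\ot_R S=0$, hence $\Ker(\ka^\sharp)=0$, and nonsingularity additionally forces $\Coker(\ka^\sharp)=0$, so injectivity (resp.~bijectivity) descends to $\ka^\sharp$.

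The main obstacle is precisely the Hom-tensor compatibility in (d): establishing both the isomorphism $\Hom_R(M,R)\ot_R S\simeq \Hom_S(M\ot_R S,S)$ for finitely presented $M$ and the matching of $(\ka_S)^\sharp$ with $\ka^\sharp\ot\Id_S$. This is where finite presentation is indispensable, the standard route being to choose a presentation $R^p\to R^q\to M\to 0$, tensor with the flat module $S$, and apply the five lemma to the resulting commutative ladder. Once this is in hand, parts (a)--(c) and the remaining flatness and faithful-flatness bookkeeping in (d) are routine.
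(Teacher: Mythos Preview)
Your proposal is correct and follows essentially the same route as the paper: (a) and (b) are handled by direct computation (what the paper calls ``immediate from the definitions''), (c) is reduced via $\sfx^{-1}$ to the vanishing of a linear map after faithfully flat base change, and (d) is proved exactly as in the paper by using the canonical isomorphism $\Hom_R(M,R)\ot_R S \simeq \Hom_S(M\ot_R S,S)$ for finitely presented $M$ to identify $\sfz^{-1}(\ka_S)$ with $\sfz^{-1}(\ka)\ot\Id_S$ and then invoking standard (faithful) flatness facts. Your write-up simply makes explicit the routine details the paper leaves to the reader.
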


\begin{proof} The proofs of (a) and (b) are immediate from the definitions.
In (c) suppose $\ka_S = \ka'_S$ for some faithfully flat $S\in \Ralg$. Then $\big(\sfx^{-1}(\ka)\big)_S = \sfx^{-1}(\ka_S) = \sfx^{-1}(\ka'_S) = \big(\sfx^{-1}(\ka')\big)_S$ as maps $(M \ot_R S)\ot_S (M\ot_R S) \to S$ by \ref{def:basch}. But then $\sfx^{-1}(\ka) = \sfx^{-1}(\ka')$ by faithfully flat descent, whence $\ka = \ka'$.

(d) Since $M$ is finitely presented and $S/R$ is flat, the canonical map $\om \co \Hom_R(M,R) \ot_R S \to \Hom_S(M \ot_R S, S)$ is an isomorphism. Recall from \eqref{Rk-bil1} the $R$-linear map $\sfz^{-1} (\ka) \co M \to \Hom_R(M, R)$. It is immediate from the definitions that
\[ \xymatrix{ M \ot_R S \ar[rr]^{\sfz^{-1}(\ka) \ot \Id_S}
          \ar[dr]_{\sfz^{-1}(\ka_S)} && \Hom_R(M, R)\ot_R S
              \ar[dl]^\om \\ & \Hom_S(M\ot_R S, S)
}\]
is a commutative diagram. Hence $\sfz^{-1}(\ka_S)$ is injective (resp.~bijective) if and only if $\sfz^{-1}(\ka)$ is so. The claim then follows from standard properties of flat (resp.~faithfully flat) extensions.
\end{proof}

\begin{prop}[$\scIBF$ and $\ibf$ as functors] \label{ibf-fun}
{\rm (a)} Let $(\al, f) \co (R, B) \to (S,C)$ be a morphism in $\kALG$. The map
\[ f \ot f \co B\ot_R B \to C \ot_S C, \quad b_1 \ot b_2 \mapsto f(b_1) \ot f(b_2) \]
is $\al$-semilinear and maps $\ibf_R(B)$ to $\ibf_S(C)$. We denote by $\ibf_\al(f) \co \ibf_R(B) \to \ibf_S(C)$ the restricted map and by $\scIBF_\al(f) \co \scIBF_R(B) \to \scIBF_S(C)$ the induced quotient map. \sm

{\rm (b} The assignments
\[ (R,B) \to \big(R,\scIBF_R(B)\big) \quad \hbox{and} \quad
     (\al,f) \mapsto \big(\al, \scIBF_\al(f)\big)
\]
define a functor $\scIBF \co \kALG \to \kMOD$ over $\kalg$ which is stable under base change in the sense of\/ {\rm \ref{fun-comm}}. \sm

{\rm (c)} The assignments
\[ (R,B) \to \big(R,\ibf_R(B)\big) \quad \hbox{and} \quad
     (\al,f) \mapsto \big(\al, \ibf_\al(f)\big)
\]
define a functor $\ibf \co \kALG \to \kMOD$ over $\kalg$ which is stable under \emph{flat} base change.
\end{prop}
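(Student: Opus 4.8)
The plan is to dispose of part (a) and functoriality by direct inspection, and then to obtain both stability statements by comparison with the tensor-product functor $T\co \kALG\to\kMOD$, $\ (R,B)\mapsto(R,B\ot_R B)$, of Example~\ref{derived}, which we already know to be stable under arbitrary base change. For (a), I would first note that $(b_1,b_2)\mapsto f(b_1)\ot f(b_2)$ is $R$-balanced, since $f$ is $\al$-semilinear and the scalar $\al(r)\in S$ may be moved across $\ot_S$: $f(rb_1)\ot f(b_2)=\al(r)f(b_1)\ot f(b_2)=f(b_1)\ot f(rb_2)$. Hence $f\ot f$ is well defined, and the same computation shows it is $\al$-semilinear. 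That it carries $\ibf_R(B)$ into $\ibf_S(C)$ is immediate from $f$ being an algebra homomorphism: a generator $ab\ot c-a\ot bc$ goes to $f(a)f(b)\ot f(c)-f(a)\ot f(b)f(c)$, again a generator, and the second family behaves identically. Restriction gives $\ibf_\al(f)$ and passage to the quotient gives $\scIBF_\al(f)$. Functoriality of both $\ibf$ and $\scIBF$ is then formal, read off from $(g\ot g)\circ(f\ot f)=(g\circ f)\ot(g\circ f)$ on simple tensors; both are functors over $\kalg$ by construction.

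Next I would organize the stability proof around two natural transformations. The inclusions $i_B\co\ibf_R(B)\to B\ot_R B$ and the quotient maps $q_B\co B\ot_R B\to\scIBF_R(B)$ assemble into natural transformations $i\co\ibf\to T$ and $q\co T\to\scIBF$ of functors over $\kalg$ (naturality is exactly the fact that $\ibf_\al(f)$, $\scIBF_\al(f)$ are the restriction and quotient of $f\ot f$). A one-line check from the defining formula $\nu_{B,\al}(m\ot s)=sF_\al(\al^B)(m)$ shows that $\nu$ is compatible with any natural transformation $\eta\co F\to G$, namely $\nu^G_{B,\al}\circ(\eta_{(R,B)}\ot\Id_S)=\eta_{(S,B\ot_\al S)}\circ\nu^F_{B,\al}$, using naturality of $\eta$ at the morphism $(\al,\al^B)$. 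Applied to $i$ and $q$, this yields two commuting squares relating $\nu^{\ibf}$, $\nu^T$, $\nu^{\scIBF}$.

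The heart of the argument, and the step I expect to be the main obstacle, is to show that the isomorphism $\nu^T_{B,\al}\co(B\ot_R B)\ot_\al S\xrightarrow{\sim}(B\ot_\al S)\ot_S(B\ot_\al S)$ carries the image of $i_B\ot\Id_S$ \emph{exactly} onto $\ibf_S(B\ot_\al S)$. This is a bookkeeping computation with generators. Since $\nu^T\big((b_1\ot b_2)\ot s\big)=(b_1\ot s)\ot(b_2\ot 1)$, the generator $(ab\ot c-a\ot bc)\ot s$ is sent to $xy\ot z-x\ot yz$ with $x=a\ot s,\ y=b\ot 1,\ z=c\ot 1$, and the generator $(ab\ot c-b\ot ca)\ot s$ is sent to $xy\ot z-y\ot zx$ for the same $x,y,z$ after moving $s$ across $\ot_S$; both are generators of $\ibf_S(B\ot_\al S)$. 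Conversely, every generator of $\ibf_S(B\ot_\al S)$ may, by additivity in each slot, be taken on simple tensors and then normalized by pulling the $S$-scalars out across $\ot_S$, exhibiting it as $\nu^T$ of an element of $\mathrm{im}(i_B\ot\Id_S)$ up to an $S$-scalar; since $\nu^T(\mathrm{im}(i_B\ot\Id_S))$ is an $S$-submodule, this gives the reverse inclusion.

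With this identification, both statements follow by diagram chases in the two naturality squares. For (b), tensoring the defining sequence of $\scIBF_R(B)$ by $S$ is right exact, so $\scIBF_R(B)\ot_\al S$ is the cokernel of $i_B\ot\Id_S$; as $\nu^T$ is an isomorphism taking $\mathrm{im}(i_B\ot\Id_S)$ onto $\ker q_{B\ot_\al S}=\ibf_S(B\ot_\al S)$, it descends to an isomorphism of cokernels, which the $q$-square identifies with $\nu^{\scIBF}_{B,\al}$; no flatness is required. For (c), when $\al$ is flat the map $i_B\ot\Id_S$ is injective, so $\ibf_R(B)\ot_\al S$ is identified with $\mathrm{im}(i_B\ot\Id_S)$, onto which $\nu^T$ restricts isomorphically with image $\ibf_S(B\ot_\al S)$; the $i$-square identifies this restriction with $\nu^{\ibf}_{B,\al}$, giving stability under flat base change. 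The failure of injectivity of $i_B\ot\Id_S$ for non-flat $\al$ is precisely why $\ibf$, unlike $\scIBF$, is only stable under flat base change.
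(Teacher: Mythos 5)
Your proposal is correct and follows essentially the same route as the paper: both reduce to the tensor-square functor of Example~\ref{derived}, compare the sequence \eqref{def:rep-ex} tensored with $S$ against the sequence for $B\ot_R S$, verify that $\nu^T$ carries the image of $i_B\ot\Id_S$ onto $\ibf_S(B\ot_\al S)$, and conclude via right-exactness for $\scIBF$ and injectivity of $i_B\ot\Id_S$ under flatness for $\ibf$. The only differences are cosmetic: you derive the commuting squares from a small general lemma on natural transformations compatible with $\nu$, and you spell out the generator computation that the paper dismisses as ``one easily verifies''.
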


\begin{proof} (a) is straightforward. We will prove (b) and (c) at the same time. It is easy to verify that $\scIBF$ and $\ibf$ are functors over $\kalg$.

We have already noted in Example \ref{derived} that
$B \mapsto B\ot_R B$ and $(\al,f) \mapsto (\al, f\ot f)$ defines a functor over $\kalg$ which is stable under base change. Indeed, for any morphism $\al \co R \to S$ the map $\nu_{B,\al}$ of \eqref{def:fun-fib1} is the well-known isomorphism
\[ \nu_{B,\al} \co (B \ot_R B ) \ot_R  S \to (B \ot_R S) \ot_S (B\ot _R S), \quad b_1 \ot b_2 \ot s \mapsto b_1 \ot 1_R \ot b_2 \ot s.\]
In the following we will abbreviate $\nu = \nu_{B, \al}$ and
$\bar \nu = \bar \nu_{B, \al} \co \scIBF_R(B) \ot_\al S \to \scIBF_S (B \ot_\al S)$. We have the following diagram with exact rows
\begin{equation} \label{flat-bc-dia}
    \vcenter{
\xymatrix{
     0 \ar@{-->}[r]^<<<<{(c)} & \ibf_R(B) \ot_R S \ar@{-->}[d] \ar[r]^{i_B \ot \Id_S} & (B\ot_R B) \ot_R S
         \ar[r]^{q_B \ot \Id_S}  \ar[d]^\nu_\simeq
         & \scIBF_R(B)\ot_R S \ar[d]^{\overline{\nu}}  \ar[r] & 0  \\
    0 \ar[r] & \ibf_S(B \ot_R S)\ar[r]^>>>>>{i_{B\ot_R S}} & (B \ot_R S) \ot_S (B \ot_R S ) \ar[r]^<<<<<{q_{B\ot_R S}} & \scIBF_S(B \ot_R S ) \ar[r] & 0
} }%%end vcenter
\end{equation}
where the top row is obtained by tensoring \eqref{def:rep-ex} with $S$ and the bottom row is \eqref{def:rep-ex} for $B\ot_R S$ (under assumption (c), namely when $S/R$ is flat, then $i_B \otimes \Id_S$ is injective. This is reflected by the dashed line at the top left of the diagram). One easily verifies that $\nu$ sends the $S$-submodule $(i_B \ot \Id_S)(\ibf_R(B)\ot_R S)$ of $(B \ot_R B ) \ot S$ onto $\ibf_S(B\ot_R S) \subset (B \ot_R S) \ot_S (B\ot_R S)$. It follows that by restriction $\nu$ induces an $S$-module isomorphism between $(i_B\ot \Id_S)(\ibf_R(B)\ot_R S)$ and $\ibf_S(B \ot_R S)$. Since from the definitions of $\nu$ and $\overline{\nu}$ the right hand side square of diagram (\ref{flat-bc-dia}) commutes, a simple diagram chase shows that $\overline{\nu}$ is an isomorphism. In case $S$ is a flat extension, the dashed vertical arrow is injective, hence bijective.
\end{proof}

\begin{cor} \label{thm-ffdes} Let $B$ be an $S/R$-form of $A = \fra \ot_k R$ as in the descent  setting~{\rm \ref{descent-setting}}. Let  $u\in \bfAut(\fra)(S'')$ be a cocycle determining $B$ up to $R$-algebra isomorphism (see Remark~{\rm \ref{over}}). We denote by $\nu \co \scIBF_k(\fra)\ot_k S'' \to \scIBF_{S''}(\fra\ot_k S'')$ the isomorphism \eqref{def:fun-fib1} for $F=\scIBF$.
Then $\scIBF_R(B)$ is an $S/R$-form of\/  ${\scIBF(\fra\ot_k R)}$ which is isomorphic as an $R$-module to the one given by the cocycle $ \nu^{-1} \circ \scIBF (u) \circ \nu$. \end{cor}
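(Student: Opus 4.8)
The plan is to apply Theorem~\ref{mainfunctor} to the functor $F=\scIBF$ and then to re-express the resulting cocycle, which a priori records a base change from $R$ to $S''$, as one recording the base change of $\fra$ directly from $k$ to $S''$.

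First I would invoke Proposition~\ref{ibf-fun}(b), which guarantees that $\scIBF\co\kALG\to\kMOD$ is a functor over $\kalg$ stable under base change, so that Theorem~\ref{mainfunctor} applies with $A=\fra\ot_k R$. Write $\ga=\al_1\circ\al=\al_2\circ\al\co R\to S''$ for the common structure map turning $S''$ into an $R$-algebra, and recall from Remark~\ref{over} that $u\in\bfAut(\fra)(S'')$ is viewed as an $S''$-algebra automorphism of $A\ot_\ga S''$ through the canonical isomorphism $\can\co A\ot_\ga S''\xrightarrow{\sim}\fra\ot_k S''$. Because $\al_1\circ\al=\al_2\circ\al$, the two maps $\nu_{A,\al_1\circ\al}$ and $\nu_{A,\al_2\circ\al}$ of Theorem~\ref{mainfunctor} coincide with $\nu_{A,\ga}=\nu_{A,\ga}^{\scIBF}$, so the theorem tells me at once that $\scIBF_R(B)$ is an $S/R$-form of $\scIBF_R(A)$ determined, up to $R$-module isomorphism, by the cocycle
\[
   c=\nu_{A,\ga}^{-1}\circ\scIBF(u)\circ\nu_{A,\ga}\in\Aut_{S''}\big(\scIBF_R(A)\ot_\ga S''\big).
\]

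Next I would set up the two identifications needed to pass from $c$ to $\nu^{-1}\circ\scIBF(u)\circ\nu$. Since $\scIBF$ is stable under base change, $\mu:=\nu_{\fra,\sigma_{R,k}}^{\scIBF}\co\scIBF_k(\fra)\ot_k R\xrightarrow{\sim}\scIBF_R(\fra\ot_k R)=\scIBF_R(A)$ is an $R$-module isomorphism; let $a\co(\scIBF_k(\fra)\ot_k R)\ot_R S''\xrightarrow{\sim}\scIBF_k(\fra)\ot_k S''$ be the canonical associativity isomorphism and $\Psi:=\scIBF(\can)\co\scIBF_{S''}(A\ot_\ga S'')\xrightarrow{\sim}\scIBF_{S''}(\fra\ot_k S'')$. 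The crux is the compatibility
\[
   \Psi\circ\nu_{A,\ga}\circ(\mu\ot_R\Id_{S''})=\nu\circ a .
\]
Granting it, I would transport $c$ along $\mu\ot_R\Id_{S''}$ and then along $a$; using $\can\circ u=u\circ\can$ (Remark~\ref{over}) and the functoriality of $\scIBF$, which give $\Psi\circ\scIBF(u)\circ\Psi^{-1}=\scIBF(u)$, a short manipulation yields
\[
   a^{-1}\circ\big(\nu^{-1}\circ\scIBF(u)\circ\nu\big)\circ a=(\mu\ot_R\Id_{S''})^{-1}\circ c\circ(\mu\ot_R\Id_{S''}).
\]
Since $\mu$ is an $R$-isomorphism and $a$ is the canonical identification built into the statement of the corollary, the descent construction of~\ref{descent} shows that the descended $R$-module attached to $\nu^{-1}\circ\scIBF(u)\circ\nu$ is $R$-isomorphic to the one attached to $c$, namely $\scIBF_R(B)$; this is exactly the assertion.

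I expect the single point requiring care to be the displayed compatibility $\Psi\circ\nu_{A,\ga}\circ(\mu\ot_R\Id_{S''})=\nu\circ a$. Conceptually it says that the base-change map $\nu_{A,\ga}$ of the $R$-algebra $A=\fra\ot_k R$ (for $R\to S''$) matches, after the canonical identifications, the base-change map $\nu$ of $\fra$ (for $k\to S''$), and this is precisely the transitivity recorded in Lemma~\ref{lem:tran}(b) for $k\xrightarrow{\sigma_{R,k}}R\xrightarrow{\ga}S''$ and the $k$-algebra $\fra$. Concretely it reduces, via the identities $\can\circ\ga^A=\Id_\fra\ot\ga$ and $(\Id_\fra\ot\ga)\circ\sigma_{R,k}^{\fra}=(\ga\circ\sigma_{R,k})^{\fra}$ together with functoriality of $\scIBF$, to a direct check on a generator $\overline{x_1\ot x_2}\ot s''$ using the explicit formula $\overline{x_1\ot x_2}\ot s\mapsto\overline{(x_1\ot 1)\ot(x_2\ot s)}$ for the maps $\nu^{\scIBF}$. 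This is pure bookkeeping with semilinear base-change maps; everything else in the argument is a direct appeal to Theorem~\ref{mainfunctor} and Proposition~\ref{ibf-fun}(b).
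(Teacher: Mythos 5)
Your proposal is correct and follows the paper's route exactly: the paper proves this corollary in one line as a special case of Theorem~\ref{mainfunctor} (via Proposition~\ref{ibf-fun}(b)), with the identifications between $\scIBF_R(\fra\ot_k R)\ot_R S''$ and $\scIBF_k(\fra)\ot_k S''$ left implicit through the conventions of Remark~\ref{over}. Your explicit verification of the compatibility $\Psi\circ\nu_{A,\ga}\circ(\mu\ot_R\Id_{S''})=\nu\circ a$ via Lemma~\ref{lem:tran}(b) is accurate bookkeeping that the paper silently absorbs into its canonical identifications, so the two arguments coincide in substance.
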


\begin{proof} This is a special case of Theorem~\ref{mainfunctor}.
\end{proof}

\begin{rem}\label{k=R} In the above corollary we can take $k = R$. The result then applies to an arbitrary $R$-algebra $A$.
\end{rem}

\begin{lemma}\label{barcomm} Let $B$ be an $R$-algebra, $\be \in \IBF_R(B)$, and $S\in \Ralg$. \sm

{\rm (a)} We denote by $\nu \co \scIBF_R(B) \ot_R S \to \scIBF_S(B\ot_R S)$  the isomorphism \eqref{def:fun-fib1} for $F=\scIBF$, by $(\bar \be)_S$ the base change of $\bar \be \co \scIBF_R(B) \to R$ and by $\overline{ (\be_S)}$ the map \eqref{def:betab} associated to the bilinear form $\be_S$. Then, after identifying $R\ot_R S = S$, we have $(\bar \be)_S = \overline{(\be_S)} \circ \nu : $
\[ \xymatrix{\scIBF_R(B) \ot_R S \ar[r]^>>>>>{(\bar \be)_S} \ar[d]_\nu^\simeq   & R \ot_R S \ar@{=}[d]\\ \scIBF_S(B\ot_R S) \ar[r]^>>>>>>>{\overline{(\be_S)}} & S }\]

{\rm (b)} Assume that $B$ is finitely presented as a $R$-module and that $S/R$ is flat. If $(B,\be)$ satisfies the IBF-principle~\ref{princip}, then so does $(B_S, \be_S)$. The converse is true whenever $S/R$ is faithfully flat.
\end{lemma}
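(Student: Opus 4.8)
The plan is to prove (a) by a direct computation on generators and then to obtain (b) as a purely formal consequence of (a) together with the fact that the comparison map $\nu$ is an isomorphism (Proposition~\ref{ibf-fun}(b)).

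For (a), I would first record that $\be_S\in\IBF_S(B\ot_R S)$, so that $\overline{(\be_S)}$ is defined: the base-change formula of \ref{def:basch} carries the invariance identities \eqref{def:inb1} for $\be$ over to $\be_S$. Both $(\bar\be)_S$ and $\overline{(\be_S)}\circ\nu$ are $S$-linear maps $\scIBF_R(B)\ot_R S\to S$, and $\scIBF_R(B)\ot_R S$ is generated as an $S$-module by the elements $\overline{b_1\ot b_2}\ot s$; hence it suffices to compare the two maps there. On the one hand, $(\bar\be)_S$ is by definition $\bar\be\ot\Id_S$, so $(\bar\be)_S(\overline{b_1\ot b_2}\ot s)=\be(b_1,b_2)\ot s$, which is $\be(b_1,b_2)\,s$ under the identification $R\ot_R S=S$. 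On the other hand, the explicit description of $\nu$ inherited from $\nu_{B,\al}$ (see the proof of Proposition~\ref{ibf-fun}) sends $\overline{b_1\ot b_2}\ot s$ to $\overline{(b_1\ot 1_S)\ot(b_2\ot s)}$, and then $\overline{(\be_S)}$ sends this to $\be_S(b_1\ot 1_S,b_2\ot s)=\be(b_1,b_2)\,s$, again by the base-change formula of \ref{def:basch}. The two agree, which proves (a).

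For (b), since $\nu$ is an $S$-module isomorphism by Proposition~\ref{ibf-fun}(b), the identity of (a) can be rewritten as $\overline{(\be_S)}=(\bar\be)_S\circ\nu^{-1}$. By the definition of the principle \ref{princip}, $(B,\be)$ satisfies the IBF-principle exactly when $\bar\be$ is an $R$-module isomorphism, and $(B_S,\be_S)$ satisfies it exactly when $\overline{(\be_S)}$ is an $S$-module isomorphism; by the rewritten identity and the fact that $(\bar\be)_S=\bar\be\ot\Id_S$, this last is equivalent to $\bar\be\ot\Id_S$ being an isomorphism. Thus everything reduces to comparing $\bar\be$ with its base change $\bar\be\ot\Id_S$. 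The forward implication is then immediate, since the base change of an isomorphism is again an isomorphism. For the converse I would invoke faithfully flat descent of isomorphisms: when $S/R$ is faithfully flat, the kernel and cokernel commute with $-\ot_R S$ and a module vanishes if and only if its base change does, so $\bar\be\ot\Id_S$ bijective forces $\bar\be$ bijective.

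Once (a) is established, (b) is essentially bookkeeping, so I expect the only real (and still minor) obstacle to lie in (a): keeping the three module identifications straight, namely matching the explicit form of $\nu$ with the base-change formula for $\be_S$ and with the identification $R\ot_R S=S$. As for the hypotheses, faithful flatness enters precisely in the converse, while the flatness and finite-presentation assumptions align this statement with Lemma~\ref{form-tr}(d) and the nonsingularity criterion of Proposition~\ref{trans}, which is the route by which the IBF-principle is verified in the applications that follow.
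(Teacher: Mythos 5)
Your proposal is correct and follows exactly the route the paper intends: its proof of (a) is simply ``immediate from the definitions,'' and your generator computation $(\bar\be)_S(\overline{b_1\ot b_2}\ot s)=\be(b_1,b_2)s=\overline{(\be_S)}\bigl(\nu(\overline{b_1\ot b_2}\ot s)\bigr)$ is precisely that verification spelled out, while your (b) matches the paper's one-line argument (isomorphisms are preserved by base change, and the converse is standard faithfully flat descent, using that $\nu$ is always an isomorphism by Proposition~\ref{ibf-fun}(b)). You also correctly observe that the flatness and finite-presentation hypotheses are not needed for this reduction; they are carried along for the surrounding applications, just as in the paper.
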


\begin{proof} Part (a)  is immediate from the definitions.
In (b) we know from (a) that $\overline{\beta_S}$ is an isomorphism. The assertion about faithful flatness is standard.
\end{proof}

\section{Descent of  bilinear forms}\label{sec:inv-bas}

In this section we will study the descent of bilinear forms in the setting  of \eqref{descent-setting}.  We are interested in having a full understanding of all $k$-bilinear forms on $B$. The guiding principle is that this ought to be possible if one knows the nature of the  $k$-bilinear forms of $\fra$, for example if $\fra$ is a finite-dimensional central-simple Lie algebra over a field of characteristic $0$ (Theorem~\ref{teo:Lie-desc}).

That this may be possible at all, is somehow surprising. The twisted nature and descent theory related to $B$ views $B$ {\em as an object over $R$} (note that it is not the case that $B$ is in any meaningful way a twisted form of $\fra \ot_k R$ or $\fra$ {\em as  $k$-algebras}). Yet the information that we will get is about $k$-bilinear forms $B \times B \to k$. As mentioned before, it is the $k$- (and not $R$-) bilinear forms on $B$ which are often of interest (such as in the case of infinite dimensional Lie theory as exemplified, for example,  by the affine Kac-Moody Lie algebras. See also \S\ref{applisec2} and \S\ref{sec:graded} below).

Assume that $\ka$ is a $k$-bilinear form on $\fra$. We want to know when, in a natural fashion, we can attach to $\ka$ an $R$-bilinear form $\ka_B$ on $B$.  It will be $\ka_B$ that will lead us to fully understand all $k$-bilinear functions on $B$. The key assumption that makes this construction possible and natural is that $\ka$ be invariant under algebra automorphisms. We define this concept before proceeding with the main results.

\begin{invariance}\label{def:autoinv}
For an $R$-algebra $B$ we denote by $\bfAut(B)$ the automorphism group functor of $B$. We remind the reader that $\bfAut(B)$ is the functor from the category $\Ralg$ to the category of groups that attaches to $S \in \Ralg$ the group $\Aut_S(B \ot_R S)$ of automorphisms of the $S$-algebra $B \ot_R S$, and to an arrow $ S \to T$ in $\Ralg$ and $f\in \Aut_S(B \ot_R S)$ the automorphism $f\ot \Id_T$ of $B \ot_R  T \simeq (B \ot_R S)\ot_S T$. We say that $\be \in \scL^2_R(B)$ is {\em  $\bfAut(B)$-invariant\/} if $f^*(\be_S) = \be_S$ holds for all $S\in \Ralg$ and all $f\in \bfAut(B)(S)$, where we remind the reader that $f^*(\be_S)(b_1 \ot s_1, b_2 \ot s_2) = \be_S\big(f(b_1 \ot s_1), f(b_2 \ot s_2)\big)$. In other words, $\be$ is $\bfAut(B)$-invariant if and only if $\be_S$ is $\Aut_S(B\ot_R S)$-invariant in the obvious sense for all $S\in \Ralg$.
\end{invariance}

\begin{rem}\label{rem:bifun} Automorphism invariance has a functorial interpretation. Namely, we have a functor $\scL^2(B)\co \Ralg \to \RMOD$ which assigns to the extension $S/R$ the $S$-module $\scL^2_S(B\ot_R S)$, and is given at the level of arrows by base change \ref{def:basch}. The automorphism group functor $\bfAut(B)$ acts on the functor $\scL^2(B)$ from the right. A bilinear form $\be \in \scL^2_R(B)$ is $\bfAut(B)$-invariant if and only if it is invariant under this action.

In particular, the above considerations apply to modules, viewed as trivial algebras. If $M$ is an $R$-module, the $R$-group functor $\bfGL(M)$ acts naturally on $\scL^2_R(M)$.
\end{rem}

Automorphism invariance behaves nicely with respect to base change and faithfully flat descent.

\begin{lemma}\label{lem:basch-inv} Let $B$ be an $R$-algebra, $\be \in \scL^2_R(B)$ and $S\in \Ralg$. If $\be$ is $\bfAut(B)$-invariant, then $\be_S$ is $\bfAut (B \ot_R S)$-invariant. The converse holds if $S/R$ is faithfully flat.
\end{lemma}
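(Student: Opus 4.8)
The plan is to prove the two implications separately, exploiting the functorial formulation of automorphism invariance from Remark~\ref{rem:bifun} together with the transitivity of base change established in Lemma~\ref{form-tr}(a). Throughout I will freely identify $(B\ot_R S)\ot_S T \simeq B \ot_R T$ and use the canonical isomorphism $\ze$ of Lemma~\ref{form-tr}(a) to pass between $(\be_S)_T$ and $\be_T$.

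\smallskip

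\noindent\textbf{The forward implication.} Assume $\be$ is $\bfAut(B)$-invariant; I want to show $\be_S$ is $\bfAut(B\ot_R S)$-invariant. By Automorphism invariance~\ref{def:autoinv} this means checking $g^*\big((\be_S)_T\big) = (\be_S)_T$ for every $T \in \Salg$ and every $g \in \Aut_T\big((B\ot_R S)\ot_S T\big)$. The key observation is that any such $T$ is naturally an object of $\Ralg$ via the composite structure map, and under the identification $(B\ot_R S)\ot_S T \simeq B\ot_R T$ one has $g \in \Aut_T(B\ot_R T) = \bfAut(B)(T)$. First I would use Lemma~\ref{form-tr}(a) to identify $(\be_S)_T$ with $\be_T$ (more precisely, $\ze^*\be_T = (\be_S)_T$). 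Then the $\bfAut(B)$-invariance hypothesis, applied at the level $T\in\Ralg$, gives $g^*(\be_T) = \be_T$. Transporting this equality back across $\ze$ yields $g^*\big((\be_S)_T\big) = (\be_S)_T$, which is exactly what is required. The only care needed is to check that the canonical isomorphisms $\ze$ intertwine the action of $g$ correctly, i.e.\ that $g$ viewed over $T$ acts compatibly whether we read the module as $(B\ot_R S)\ot_S T$ or as $B\ot_R T$; this is a routine compatibility of $\ze$ with the pullback $g^*$.

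\smallskip

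\noindent\textbf{The converse under faithful flatness.} Now assume $S/R$ is faithfully flat and $\be_S$ is $\bfAut(B\ot_R S)$-invariant; I want $\be$ to be $\bfAut(B)$-invariant, i.e.\ $f^*(\be_T) = \be_T$ for all $T\in\Ralg$ and $f\in\Aut_T(B\ot_R T)$. Fix such a $T$ and $f$. The strategy is to test the equality $f^*(\be_T) = \be_T$ of elements of $\scL^2_T(B\ot_R T)$ after a further faithfully flat base change, using Lemma~\ref{form-tr}(c): two bilinear forms over $T$ that agree after a faithfully flat extension are equal. The natural extension to use is $T' = S\ot_R T$, which is faithfully flat over $T$ since $S/R$ is faithfully flat and faithful flatness is stable under base change. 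Over $T'$ the form $\be_T$ base-changes to $\be_{T'} \simeq (\be_S)_{T'}$ (again via Lemma~\ref{form-tr}(a)), and the automorphism $f$ base-changes to $f\ot\Id_{S}\in\Aut_{T'}\big((B\ot_R S)\ot_S T'\big)$, which lies in $\bfAut(B\ot_R S)(T')$. The hypothesis that $\be_S$ is $\bfAut(B\ot_R S)$-invariant then gives $(f\ot\Id)^*\big((\be_S)_{T'}\big) = (\be_S)_{T'}$. Finally, by Lemma~\ref{form-tr}(b) base change commutes with the pullback $f^*$, so $\big(f^*(\be_T)\big)_{T'} = (f\ot\Id)^*\big((\be_T)_{T'}\big) = (\be_T)_{T'}$, and Lemma~\ref{form-tr}(c) concludes $f^*(\be_T) = \be_T$.

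\smallskip

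\noindent\textbf{The main obstacle.} I expect the forward direction to be essentially bookkeeping, and the genuine content to sit in the converse, specifically in correctly matching up the automorphism functors. The delicate point is that invariance of $\be_S$ is quantified over $T'\in\Salg$ whereas invariance of $\be$ is quantified over $T\in\Ralg$; bridging these requires recognizing $S\ot_R T$ simultaneously as an object of $\Salg$ and as a faithfully flat $T$-algebra, and checking that $f\ot\Id_S$ really is the right $S$-automorphism to which the hypothesis applies. Assembling the various canonical identifications ($\ze$, the associativity isomorphisms, and the compatibilities of Lemma~\ref{form-tr}) so that the pullbacks line up without sign or side errors is where the argument must be written carefully, even though each individual verification is routine.
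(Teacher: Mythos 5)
Your proof is correct and follows essentially the same route as the paper: the forward direction via viewing $T\in\Salg$ as an object of $\Ralg$ and transporting across the canonical isomorphism $\ze$ (the paper isolates this as its observation (I)), and the converse by base-changing to the faithfully flat extension $S\ot_R T$ and invoking Lemma~\ref{form-tr}(b) and (c) exactly as the paper does with its $T=S\ot_R S'$. The only cosmetic difference is that you apply the invariance hypothesis to the single transported automorphism $f\ot\Id$, whereas the paper deduces full $\Aut$-invariance of $\be_T$ by applying its observation (I) twice; the substance is identical.
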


\begin{proof} (I) We begin with a general observation. Let $T$ be an extension of $S$. The canonical $T$-linear algebra isomorphism $\ze \co (B \ot_R S) \ot_S T \to  B\ot_R T$ of Lemma~\ref{form-tr}(a) induces a group isomorphism
\[ \Aut_T\big( (B \ot_R S) \ot_S T\big) \xrightarrow{\simeq} \Aut_T(B\ot_R T), \quad f \mapsto \ze \circ f \circ \ze^{-1}\] (we view $T$ as an object in $\Ralg$ in the obvious way). Since $(\ze^{-1})^*\big( (\be_S)_T\big) = \be_T$ by \eqref{form-tr1}, it follows that $(\be_S)_T$ is $\Aut_T\big( (B \ot_R S)\ot_S T\big)$-invariant if and only if $\be_T$ is $\Aut_T(B\ot_R T)$-invariant. \sm

(II) It is immediate from (I) and the definitions that if $\be$ is $\bfAut(B)$-invariant then $\be_S$ is $\bfAut(B\ot S)$-invariant.
\sm

(III) Assume now that $S/R$ is faithfully flat and that $\be_S$ is $\bfAut(B\ot_R S)$-invariant. To prove that $\be$ is $\bfAut(B)$-invariant, let $S'\in \Ralg$ and let $f\in \bfAut(B)(S')$. The extension $T=S\ot_R S'$ of $S'$ is faithfully flat. Hence, by Lemma~\ref{form-tr}(c), we have $f^*(\be_{S'}) = \be_{S'}$ as soon as $\big( f^*(\be_{S'})\big)_T = (\be_{S'})_T \in \scL^2_T\big( (B\ot_R S') \ot_{S'} T \big)$. Note that $\big( f^*(\be_{S'})\big)_T = (f\ot \Id_T)^*\big( (\be_{S'})_T\big)$ by \eqref{form-tr2}. Applying the considerations of (I) to the isomorphism $\ze' \co (B \ot_R S') \ot_{S'} T \to B \ot_R T$ shows that we need to prove that $\be_T$ is $\Aut_T(B\ot_R T)$-invariant. But by (I) again this is indeed the case. \end{proof}

\begin{theorem}[Descent of $\bfAut$-invariant forms]\label{prop:referee} Assume that we are under the descent setting of {\rm \eqref{descent-setting}:} $\fra$ is a $k$-algebra, $R\in \kalg$ is flat, and we are given an $R$-algebra $B$ which is a twisted form of $A = \fra\ot_k R$, hence split by some faithfully flat extension of $R$. Assume that $\ka \in \scL^2_k(\fra)$ is an $\bfAut(\fra)$-invariant bilinear form. \begin{itemize}

\item[\rm(a)] There exists a unique $R$-bilinear form $\ka_B \in \scL^2_R(B)$ such that $(\ka_B)_S= \theta^*(\ka_S)$ whenever $S/R$ is faithfully flat and $\theta \co  B \ot_R S \to \fra \ot_k S $ is an isomorphism of $S$-algebras. Moreover, $\ka_B$ is $\bfAut(B)$-invariant. \sm

\item[\rm (b)] If $\ka$ is invariant, then so is $\ka_B$. \sm

\item[\rm (c)] If $\fra$ is finitely presented and $\ka$ is nondegenerate (resp. nonsingular), then so is $\ka_B$. \end{itemize}
\end{theorem}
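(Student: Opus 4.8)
The plan is to establish (a) by faithfully flat descent of bilinear forms, with the hypothesis that $\ka$ is $\bfAut(\fra)$-invariant supplying precisely the cocycle (gluing) condition, and then to deduce (b) and (c) from the general principle that invariance, nondegeneracy and nonsingularity are all detected by faithfully flat base change (Lemma~\ref{form-tr}(c),(d) and Proposition~\ref{ibf-fun}(c)).

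For uniqueness in (a), if $\ka_B, \ka_B' \in \scL^2_R(B)$ both satisfy $(\ka_B)_S = \theta^*(\ka_S) = (\ka_B')_S$ for one faithfully flat $S$, then $\ka_B = \ka_B'$ by Lemma~\ref{form-tr}(c). For existence I fix one faithfully flat $S/R$ together with an $S$-algebra isomorphism $\theta \co B \ot_R S \to \fra \ot_k S$ (which exists since $B$ is a twisted form split by such an $S$) and put $\lambda = \theta^*(\ka_S) \in \scL^2_S(B \ot_R S)$; the goal is to show $\lambda$ descends to $R$. Writing $S'' = S \ot_R S$ with projections $\al_1,\al_2$, Lemma~\ref{form-tr}(b) gives $(\lambda)_{\al_i} = (\theta \ot \Id_{S''})^*\big((\ka_S)_{\al_i}\big)$, while Lemma~\ref{form-tr}(a) identifies both $(\ka_S)_{\al_1}$ and $(\ka_S)_{\al_2}$, via the canonical isomorphisms, with the single form $\ka_{S''}$ on $\fra \ot_k S''$ — these agree because $\al_1 \circ \al = \al_2 \circ \al$ forces the two composite structure maps $k \to S''$ to coincide. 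Thus, writing $\theta_i \co B \ot_R S'' \to \fra \ot_k S''$ for the base changes of $\theta$, the descent condition for $\lambda$ becomes $\theta_1^*(\ka_{S''}) = \theta_2^*(\ka_{S''})$. Since $\theta_1 = u \circ \theta_2$ for the cocycle $u \in \Aut_{S''}(\fra \ot_k S'') = \bfAut(\fra)(S'')$ determining $B$ (Remark~\ref{over}), I obtain $\theta_1^*(\ka_{S''}) = \theta_2^*\big(u^*(\ka_{S''})\big) = \theta_2^*(\ka_{S''})$, where the last equality is exactly the $\bfAut(\fra)$-invariance of $\ka$. Hence $\lambda$ descends to a form $\ka_B$ with $(\ka_B)_S = \theta^*(\ka_S)$. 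This identification of the descent datum with the single automorphism $u$ is the crux, and the main obstacle is the bookkeeping with the canonical base-change isomorphisms needed to make it; once it is in place, $\bfAut(\fra)$-invariance closes the step.

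To see that $\ka_B$ is independent of $(S,\theta)$ and satisfies the stated property for \emph{every} faithfully flat $S'$ with isomorphism $\theta'$, I pass to the common refinement $T = S \ot_R S'$: the base changes of $\theta$ and $\theta'$ to $T$ differ by an element of $\bfAut(\fra)(T)$, so invariance gives $(\theta_T)^*(\ka_T) = (\theta'_T)^*(\ka_T)$, and Lemma~\ref{form-tr}(a),(b),(c) then yield $(\ka_B)_{S'} = (\theta')^*(\ka_{S'})$. Finally, $\bfAut(B)$-invariance of $\ka_B$ follows from Lemma~\ref{lem:basch-inv}: the base change $(\ka_B)_S = \theta^*(\ka_S)$ is $\bfAut(B \ot_R S)$-invariant because $\ka_S$ is $\bfAut(\fra \ot_k S)$-invariant (Lemma~\ref{lem:basch-inv} applied to $\fra$ over $k$) and the $S$-algebra isomorphism $\theta$ transports invariance; since $S/R$ is faithfully flat, the converse part of Lemma~\ref{lem:basch-inv} gives the claim.

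For (b), base change preserves invariance and pullback along the algebra isomorphism $\theta$ preserves it, so $(\ka_B)_S = \theta^*(\ka_S)$ is invariant; invariance then descends along the faithfully flat $S/R$, which I phrase via Proposition~\ref{ibf-fun}(c): the map $\widetilde{\ka_B} \circ i_B \co \ibf_R(B) \to R$ vanishes after the (flat) base change to $S$, identifying $\ibf_R(B)\ot_R S \simeq \ibf_S(B\ot_R S)$, hence vanishes by faithful flatness, i.e.\ $\ka_B$ is invariant. For (c), I first note that $\fra$ finitely presented over $k$ makes $A = \fra \ot_k R$ finitely presented over $R$, and since finite presentation descends along the faithfully flat $S/R$, $B$ is finitely presented over $R$. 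As $R/k$ is flat and $S/R$ faithfully flat, $S/k$ is flat, so Lemma~\ref{form-tr}(d) shows $\ka_S$ is nondegenerate (resp.\ nonsingular); pullback by the isomorphism $\theta$ transfers this to $(\ka_B)_S$, and the converse direction of Lemma~\ref{form-tr}(d), valid since $S/R$ is faithfully flat and $B$ is finitely presented, descends nondegeneracy (resp.\ nonsingularity) to $\ka_B$.
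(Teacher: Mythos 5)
Your proposal is correct and follows essentially the same route as the paper: you construct $\theta^*(\ka_S)$, verify the descent condition via the cocycle and the $\bfAut(\fra)$-invariance of $\ka_{S''}$ (the paper does this element-wise, showing the values of $\theta^*(\ka_S)$ on $B\times B$ lie in the equalizer of $\al_1,\al_2$, which is precisely your morphism-level gluing condition), pass to $T=S\ot_R S'$ for independence of the trivialization, and invoke Lemma~\ref{lem:basch-inv} for $\bfAut(B)$-invariance and Lemma~\ref{form-tr}(c),(d) for parts (b) and (c), just as the paper does. The only cosmetic discrepancy is the direction of the cocycle (your $u=\theta_1\circ\theta_2^{-1}$ is the inverse of the paper's $u=\theta_2\circ\theta_1^{-1}$), which is harmless since invariance is assumed under all of $\bfAut(\fra)(S'')$.
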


\begin{proof} (a) Throughout the proof we fix $S$  and $\theta$ as in (a). We let $\al \co R \to S$ be the structure map and $\ka_S$ the base change of $\ka$ to $S$. It will be convenient to first point out a general observation, which is immediate from the definitions. \begin{equation} \label{katrans}
\hbox{\it If $\be \co  S \to T$ is a homomorphism in $\kalg,$ then $(\Id_\fra \ot \be)^*(\ka_T )= \be \circ \ka_S$.} \end{equation}

We first show the existence of an $R$-bilinear form $\ka^\theta_B \in\scL^2_R(B)$ satisfying $(\ka^\theta_B)_S = \theta^*(\ka_S)$. The notation $\ka^\theta_B$ indicates that, a priori, the form depends on $\theta$ (which in turn involves an $S$). According to \ref{desformk} and \ref{over} the cocycle corresponding to the $S$-algebra isomorphism $\theta^{-1}$ is  $u = \theta_2\theta_1^{-1}$ and we have
\begin{equation}\label{aux}
\theta(B \ot 1) = \{ x \in \fra\ot_k S : u\big((\Id_\fra \ot \al_1)(x)\big) = (\Id_\fra \ot \al_2)(x)\}.
\end{equation}
We identify $B \subset B \ot_R S$ via $b \mapsto b \ot 1$ and claim that the restriction of $\theta^*(\ka_S)$ to $B \times B$, which a priori takes values in $S$, actually takes values in $R$. In other words, for $b,b'\in B$ and $x=\theta(b), x'=\theta(b') \in \fra \ot_k  S$ we claim $\theta^*(\ka_S)(b,b') = \ka_S(x , x') \in R$.  With $\al_i \co S \to S''$ as before we have, using \eqref{katrans}, \eqref{aux} and the automorphism-invariance of $\ka_{S''}$,
\begin{equation} \label{azumi1}
\begin{split}
\alpha_1 \big(\theta^*(\ka_S)(b,b')\big)
 &= \alpha_1\big(\ka_S(x, x')\big)
  = \ka_{S''}\big((\Id_\fra \ot \al_1)(x), (\Id_\fra \ot \al_1)(x') \big)\\
& = \ka_{S''}\Big( u \big( (\Id_\fra \ot \al_1)(x) \big),
       u \big( (\Id_\fra \ot \al_1)(x')\big) \Big)
\\& = \ka_{S''}\big( (\Id_\fra \ot \al_2)(x), (\Id_\fra \ot \al_2)(x') \big) \\
& = \alpha_2\big(\ka_S(x, x')\big) = \alpha_2 \big(\theta^*(\ka_S)(b,b')\big). \end{split} \end{equation}
This shows that $\theta^*(\ka_S)(b,b')$ belongs to the equalizer of $\al_1$ and $\al_2$ in $S$, but these are precisely the elements of $R$ (viewed as elements of $S$). Hence the restriction $\ka_B^\theta$ of $\theta^*(\ka_S)$ to $B$ is an $R$-bilinear form on $B$. Clearly, by its very definition, $(\ka_B^\theta)_S = \theta^*(\ka_S)$. \sm

We next aim to show that $\ka^\theta_B$ is independent of  the trivialization $\theta$. Thus, let $S'/R$ be another faithfully flat extension, say with structure map $\al' \co R \to S'$, and let $\theta' \co B \ot_R S' \to \fra \ot_k S'$ be an $S'$-algebra isomorphism. By what we just have shown, we have an $R$-bilinear form $\ka_B^{\theta'}$ satisfying $(\ka_B^{\theta'})_S = (\theta')^*(\ka_S)$, and we claim $\ka_B^\theta = \ka_B^{\theta'}$. The vehicle to  show this will be the algebra $T= S \ot_R  S'$ which we view in an obvious way as an $S$- and $S'$-algebra, say with structure maps $\be$ and $\be'$ respectively. For simplicity we denote by $\rho \co k \to R$ the structure map $\si_{R,k}$ of $R$. We have the following commutative diagram:
\[ \xymatrix{ && S  \ar[dr]^\be \\
     k \ar[r]^\rho & R \ar[ur]^\al \ar[dr]_{\al'} && T \\
     && S' \ar[ru]_{\be'}
}\]
We remind the reader that $T$ is faithful flat over $S$, $S'$ and $R$.
Let $\xi \co (\fra \ot_{\al \circ \rho} S) \ot_\be T \to \fra \ot_{\be \circ \al \circ \rho} T = \fra \ot_k T$ be the canonical $T$-linear algebra isomorphism, and define $\tilde \theta \co \fra \ot_{\be \circ \al \circ \rho} T \to B \ot_{\be \circ \al} T$ by the composition of maps
\[ \tilde \theta : \xymatrix{
   \fra \ot_{\be \circ \al \circ \rho} T  \ar[r]^{\xi^{-1}}
       & ( \fra \ot_{\al \circ \rho} S) \ot_\be T
       \ar[rr]^{(\theta \ot \Id)^{-1}} && (B\ot_\al S)\ot_\be T \ar[r]^\ze
       & B \ot_{\be \circ \al} T}.
\]
Then $\tl \theta^*$ maps $(\ka_B^\theta)_{\be \circ \al}\in \scL^2_T(B\ot_{\be \circ \al} T)$ onto a bilinear form in $\scL_T^2( \fra \ot_k T)$. In fact,
\begin{align*}
  \tl \theta^*\big( (\ka_B^\theta)_{\be \circ \al}\big) &= ( \ze \circ (\theta^{-1} \ot \Id) \circ \xi^{-1})^*\, ( (\ka^\theta_B)_{\be \circ \al}) =
  (\xi^{-1 \, *} \circ (\theta^{-1} \ot \Id)^* \circ \ze^*) \, (\ka_B^\theta)_{\be \circ \al})
  \\ &\stackrel{\eqref{form-tr1}}{=}
   ( \xi^{-1\, *} \circ (\theta^{-1} \ot \Id)^*) \, \big( (( \ka_B^\theta)_\al)_\be \big)
     = (\xi^{-1\, *} \circ (\theta^{-1} \ot \Id)^*)\big( (\theta^* \ka_{\al \circ \rho})_\be\big)
     \\ &\stackrel{\eqref{form-tr2}}{=}
     \big( \xi^{-1\, *} \circ (\theta^{-1} \ot \Id)^* \circ (\theta \ot \Id)^*\big) \, \big( ( \ka_{\al \circ \rho})_\be\big)
     = \xi^{-1\, *}\, \big( (\ka_{\al \circ \rho})_\be)
      \stackrel{\eqref{form-tr1}}{=} \ka_{\be \circ \al \circ \rho} = \ka_T.
  \end{align*}
We define $\tl \theta ' \co \fra \ot_k T \to B_{\be'\circ \al'} T$ by replacing $\theta \ot \Id$ by $\theta'\ot \Id$ in the definition of $\tl \theta$ above. By symmetry $\big( \tl \theta^{\prime \, *}\big)(\ka_B^{\theta '})_{\be ' \circ \al'}  = \ka_T$. Since $\tl \theta^{-1} \circ \tl \theta ' \in \bfAut(\fra)(T)$, we get \[ (\ka_B^{\theta '})_{\be' \circ \al'} = (\tl \theta ')^{-1 \, *} (\ka_T)
   = \tl \theta^{-1\, *} (\ka_T) = (\ka_B^\theta)_{\be \circ \al}\]
so that Lemma~\ref{form-tr}(c) implies $\ka_B^\theta = \ka_B^{\theta '}$. We are now justified to define $\ka_B = \ka_B^\theta$.

Finally, we use Lemma~\ref{lem:basch-inv} to establish that $\ka_B$ is $\bfAut(B)$-invariant: $\ka$ is $\bfAut(\fra)$-invariant $\implies \ka_S$ is $\bfAut(\fra_S)$-invariant $\implies \theta^*(\ka_S)= (\ka_B)_S$ is $\bfAut(B \ot_\al S)$-invariant $\implies \ka_B$ is $\bfAut(B)$-invariant. \sm

(b) $\ka_B$ is invariant since invariance of bilinear forms is stable under base ring extensions and, by Lemma~\ref{form-tr}(c), also under faithfully flat descent. \sm

(c) The argument is analogous to that of (b), using Lemma~\ref{form-tr}(d) and the fact that being  finitely presented is a property which is invariant under arbitrary base change and faithfully flat descent. \end{proof}

\begin{cor}\label{missing} Let $B$ be an $S/R$-form of $A = \fra \ot_k R$ as in the descent setting~{\rm \ref{descent-setting}}. We further assume that $\fra$ is finitely presented as a $k$-module and that $\ka \in \IBF_k(\fra)$ is $\bfAut(\fra)$-invariant. Let $\ka_B$ be the $R$-bilinear form on $B$ associated to $\ka$ as described in Theorem~{\rm \ref{prop:referee}}. If the IBF-principle holds for $(\fra, \ka)$, then the IBF-principle holds for $(B,\ka_B)$ a well.   \end{cor}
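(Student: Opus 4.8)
The plan is to verify the two requirements of the IBF-principle~\ref{princip} for $(B,\ka_B)$: that $\ka_B \in \IBF_R(B)$, and that the induced map $\overline{\ka_B} \co \scIBF_R(B) \to R$ is an $R$-module isomorphism. The first requirement is immediate: since $\ka \in \IBF_k(\fra)$ satisfies the invariance condition \eqref{def:inb1} (which is ring-independent), Theorem~\ref{prop:referee}(b) shows that the descended form $\ka_B$ is invariant as well, and $\ka_B$ is $R$-bilinear by its very construction, so $\ka_B \in \IBF_R(B)$. The entire content therefore lies in the second requirement, and my strategy would be to \emph{ascend} the IBF-principle over $k$, \emph{transport} it across the trivialization, and then \emph{descend} it over $R$.

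First I would record two preliminary facts. Because $B$ is an $S/R$-form of $A = \fra \ot_k R$, there is a trivializing $S$-algebra isomorphism $\theta \co B \ot_R S \to \fra \ot_k S$, using the canonical identification $\fra \ot_k S = (\fra \ot_k R)\ot_R S = A \ot_R S$ of Remark~\ref{over}. Since $\fra$ is finitely presented over $k$, the $S$-module $\fra \ot_k S \cong B \ot_R S$ is finitely presented, and as $S/R$ is faithfully flat, finite presentation descends, so $B$ is finitely presented as an $R$-module. Moreover, $R/k$ flat together with $S/R$ flat gives, by transitivity of flatness, that $S/k$ is flat. With these in hand, the ascent step is an application of Lemma~\ref{barcomm}(b) with the base ring taken to be $k$ and the flat extension $S/k$: the IBF-principle for $(\fra,\ka)$ yields the IBF-principle for $(\fra \ot_k S, \ka_S)$ over $S$, i.e.\ $\overline{\ka_S} \co \scIBF_S(\fra \ot_k S) \to S$ is an $S$-module isomorphism.

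For the transport step I would use that $(\ka_B)_S = \theta^*(\ka_S)$ by Theorem~\ref{prop:referee}(a). Functoriality of $\scIBF$ (Proposition~\ref{ibf-fun}) supplies an $S$-module isomorphism $\scIBF(\theta) \co \scIBF_S(B\ot_R S) \to \scIBF_S(\fra\ot_k S)$, induced by $x \ot x' \mapsto \theta(x)\ot\theta(x')$, and a direct check on generators gives $\overline{(\ka_B)_S}(\overline{x\ot x'}) = (\ka_B)_S(x,x') = \ka_S(\theta x, \theta x') = \big(\overline{\ka_S}\circ \scIBF(\theta)\big)(\overline{x\ot x'})$, so that $\overline{(\ka_B)_S} = \overline{\ka_S}\circ \scIBF(\theta)$. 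As a composite of two isomorphisms this is an isomorphism, whence $(B\ot_R S, (\ka_B)_S)$ satisfies the IBF-principle over $S$. Finally, since $B$ is finitely presented over $R$ and $S/R$ is faithfully flat, the converse half of Lemma~\ref{barcomm}(b) transfers the IBF-principle from $(B\ot_R S, (\ka_B)_S)$ back to $(B,\ka_B)$, as desired.

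The proof is largely mechanical once Lemma~\ref{barcomm} is available, so the only delicate point — and the one I would be most careful about — is bookkeeping the two distinct roles of the extension $S$: the base change over $k$ relating $\fra$ to $\fra\ot_k S$ (where ascent happens) versus the base change over $R$ relating $B$ to $B\ot_R S$ (where descent happens). These are matched precisely by the identification $\fra\ot_k S = A\ot_R S$ and the trivialization $\theta$, and the final descent is exactly where faithful flatness of $S/R$ is indispensable.
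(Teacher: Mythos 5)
Your proof is correct and follows essentially the same route as the paper: ascend the IBF-principle via Lemma~\ref{barcomm}(b) over the flat extension $S/k$, transport it across the trivialization using $(\ka_B)_S = \theta^*(\ka_S)$ and the identity $\overline{(\ka_B)_S} = \overline{\ka_S}\circ\scIBF(\theta)$, then descend over the faithfully flat extension $S/R$. The only cosmetic difference is that you package the final descent as the converse half of Lemma~\ref{barcomm}(b), which forces your (correct, but in the paper's formulation unnecessary) check that $B$ is finitely presented over $R$, whereas the paper invokes Lemma~\ref{barcomm}(a) and faithfully flat descent of $(\overline{\ka_B})_S$ directly --- the same argument, since the converse in Lemma~\ref{barcomm}(b) is proved exactly that way.
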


\begin{proof} By faithfully flat descent, the associated map $\overline{\ka_B} \co \scIBF_R(B) \to R$ is an isomorphism as soon as the extended map $(\overline{\ka_B})_{_S}$ has this property. By  Lemma~\ref{barcomm}(a) applied to $\be = \ka_B$ we have $(\overline{\ka_B})_{_S} = \overline{(\ka_B)_S}\circ \nu$. Since $\nu$ is an isomorphism, we are reduced to showing that $\overline{(\ka_B)_S}$ is an isomorphism. By the theorem, $(\ka_B)_S = \theta^*(\ka_S)$. From the definitions, the reader easily checks that $\overline{\theta^*(\ka_S)} = \overline{\ka_S} \circ \scIBF(\theta)$. By functoriality $\scIBF(\theta)$ is an isomorphism while $\overline{\ka_S}$ is an isomorphism by Lemma~\ref{barcomm}(b). Hence $\overline{(\ka_B)_S}= \overline{\ka_S} \circ \scIBF(\theta)$ is an isomorphism. This completes the proof.
\end{proof}

\section{Applications to Lie  and other classes of algebras}\label{applisec2}

We now look in detail at our general results in some important special cases. Unless stated otherwise, we use our basic setting: $k$ is a commutative associative unital ring and $R\in \kalg$. If $f$ is an endomorphism  of a finitely generated and projective $R$-module, we denote by $\tr(f)$ its trace. For details on this notion, see for example \cite[II, \S4.3]{bou:Aa} or \cite{KO}.

\subsection{Lie Algebras}\label{subsec:Lie} We start by discussing the Killing form of a Lie algebra $L$, defined by $\ka(l_1, l_2) = \tr \big( (\ad l_1) \circ (\ad l_2)\big)$ for $l_i\in L$.

\begin{prop}\label{kill-Lie} Let $L$ be a Lie algebra over $R$ whose underlying $R$-module is finitely generated and projective. \sm

{\rm (a)} The Killing form $\ka$ of $L$ is an invariant and $\bfAut(L)$-invariant $R$-bilinear form. For any $S\in \Ralg$ the base change $\ka_S$ is the Killing form of the Lie algebra $L \ot_R S$.\sm

{\rm (b)} Suppose $f$ is an $\al$-semilinear automorphism of $L$ for some $\al \in \Aut_k(R)$. Then $\ka\big( f(l_1), f(l_2)\big) = \al \big( \ka(l_1, l_2)\big)$ holds for $l_i \in L$.
\end{prop}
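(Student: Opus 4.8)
The plan is to establish Proposition~\ref{kill-Lie} in three pieces, handling (a) and (b) together since the semilinear statement in (b) is really the computational engine behind the $\bfAut(L)$-invariance in (a).

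First I would record the basic trace identity. For any $\al$-semilinear automorphism $f$ of $L$ and any $l\in L$ one checks directly from the definition of the adjoint representation that $\ad\big(f(l)\big) = f\circ (\ad l)\circ f^{-1}$ as $k$-linear endomorphisms of $L$; this uses only that $f$ preserves the bracket. Then $(\ad f(l_1))\circ(\ad f(l_2)) = f\circ(\ad l_1)\circ(\ad l_2)\circ f^{-1}$, so the two composites are conjugate. The key point is how the trace interacts with an $\al$-semilinear conjugation: since $\tr$ is defined for the finitely generated projective $R$-module $L$ and $f$ is $\al$-semilinear, conjugating an $R$-linear endomorphism $g$ by $f$ produces the $R$-linear endomorphism $f\circ g\circ f^{-1}$ whose trace is $\al\big(\tr(g)\big)$. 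This is the crucial lemma and I expect it to be the main obstacle: one must verify that $\tr(f\circ g\circ f^{-1}) = \al(\tr(g))$ for $\al$-semilinear $f$, which follows by expressing the trace via a dual basis (or via the canonical element of $\End_R(L)\simeq L\ot_R L^*$) and tracking how $\al$-semilinearity twists the evaluation pairing. Granting this, $\ka\big(f(l_1),f(l_2)\big) = \tr\big(f\circ(\ad l_1)(\ad l_2)\circ f^{-1}\big) = \al\big(\ka(l_1,l_2)\big)$, which is exactly (b).

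For part (a), the invariance $\ka(l_1 l_2, l_3) = \ka(l_1, l_2 l_3)$ is the classical associativity of the trace form, following from $\ad[l_1,l_2] = [\ad l_1,\ad l_2]$ and $\tr(XYZ)=\tr(YZX)$ applied to $X=\ad l_1$, $Y=\ad l_2$, $Z=\ad l_3$; this holds verbatim over $R$ since $L$ is finitely generated projective. That $\ka$ is $R$-bilinear is clear, since $\ad$ is $R$-linear and $\tr$ is $R$-valued. To get $\bfAut(L)$-invariance, I would combine the base-change statement of (a) with (b): for $S\in\Ralg$ and $f\in\Aut_S(L\ot_R S)$, the map $f$ is $\Id_S$-semilinear, so the specialization of (b) to $\al=\Id_S$ gives $\ka_S\big(f(x),f(y)\big)=\ka_S(x,y)$, i.e. $f^*(\ka_S)=\ka_S$. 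By Definition~\ref{def:autoinv} this is precisely $\bfAut(L)$-invariance of $\ka$.

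It remains to prove the base-change assertion that $\ka_S$ is the Killing form of $L\ot_R S$, which (a) used above. Here I would invoke that $L$ is finitely generated projective over $R$, so that for any $g\in\End_R(L)$ the base-changed endomorphism $g\ot\Id_S\in\End_S(L\ot_R S)$ satisfies $\tr(g\ot\Id_S)=\al\big(\tr(g)\big)$ (compatibility of trace with base change), and that $\ad_{L\ot_R S}(l\ot 1) = (\ad_L l)\ot\Id_S$. Writing $\ka'$ for the Killing form of $L\ot_R S$, these two facts give $\ka'(l_1\ot 1, l_2\ot 1) = \tr\big((\ad l_1)(\ad l_2)\ot\Id_S\big) = \al\big(\ka(l_1,l_2)\big) = \ka_S(l_1\ot 1, l_2\ot 1)$ by the defining formula for base change in~\ref{def:basch}; since both forms are $S$-bilinear and elements $l\ot 1$ generate $L\ot_R S$ over $S$, they agree. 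With the base-change identity in hand, the two displayed conclusions of (a) and the identity of (b) are all established.
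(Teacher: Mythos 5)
Your proof is correct and follows the same skeleton as the paper's: the conjugation formula $\ad\big(f(l)\big)=f\circ(\ad l)\circ f^{-1}$, a trace identity for semilinear conjugation, compatibility of the trace with base change to identify $\ka_S$ with the Killing form of $L\ot_R S$, and then $\bfAut(L)$-invariance. The one genuine divergence is in the key trace lemma. The paper proves the equivalent identity $\tr(fg)=\al\big(\tr(gf)\big)$ (for $g$ an $\al^{-1}$-semilinear endomorphism, so that $fg$ and $gf$ are $R$-linear) by reducing \emph{by descent} to the case where $L$ is free of finite rank and then computing with matrices, using that $fg$ and $gf$ are represented by $F(\al\cdot G)$ and $G(\al^{-1}\cdot F)$. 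You instead propose proving $\tr(f\circ g\circ f^{-1})=\al\big(\tr(g)\big)$ directly on the projective module via a dual basis, and although you only sketch it, the sketch is sound: if $\{e_i, e_i^*\}$ is a dual basis for $L$, then $\{f(e_i),\, \al\circ e_i^*\circ f^{-1}\}$ is again a dual basis (the functionals $\al\circ e_i^*\circ f^{-1}$ are $R$-linear precisely because $\al$ and $\al^{-1}$ cancel the semilinear twists), and summing $\big(\al\circ e_i^*\circ f^{-1}\big)\big(fgf^{-1}(f(e_i))\big)=\al\big(e_i^*(g(e_i))\big)$ gives the identity. Your route avoids the localization/descent step and the matrix computation, working uniformly over the projective module; the paper's route is more concrete but needs the reduction to the free case. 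A second, cosmetic difference: you obtain $\bfAut(L)$-invariance as the specialization $\al=\Id_S$ of (b) applied over each $S\in\Ralg$ (legitimate, since $L\ot_R S$ is again finitely generated projective over $S$), whereas the paper proves $\Aut_R(L)$-invariance directly and then invokes the base-change identification $\ka_S=\ka_{L\ot_R S}$; these are logically interchangeable. One small caution: in your final paragraph you reuse $\al$ for the structure map $R\to S$ after using it for the ring automorphism in (b); keep the two roles notationally separate to avoid confusion, but the argument itself is unaffected.
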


\begin{proof} (a) The invariance of $\ka$ follows from $\tr(f g) = \tr(g f)$ for endomorphisms $f,g$ of $L$. This identity also implies that $\ka$ is $\Aut_R(L)$-invariant: For $f\in \Aut_R(L)$ we have $\ad\big(f(l)\big) = f (\ad l) f^{-1}$ and hence $\ka\big( f(l_1) , f(l_2)\big) =  \tr\big( f (\ad l_1) (\ad l_2)  f^{-1}\big) = \ka(l_1, l_2)$. The adjoint maps of the Lie $S$-algebra $L \ot_R S$ are obtained by base change from the adjoint maps of $L$. Since the trace is invariant under base change, the Killing form of the Lie algebra $L\ot_R S$ is the base change of $\ka$ by $S$. It then follows that $\ka$ is $\bfAut(L)$-invariant. \sm

(b) We first prove an auxiliary formula. Namely, assume that $g$ is an $\al^{-1}$-semilinear endomorphism of $L$. Then \begin{equation} \label{kill-Lie1}
    \tr(f  g) = \al \big( \tr(g f) \big)
\end{equation}
(observe that both $f g$ and $g f$ are $R$-linear). By descent it is sufficient to show \eqref{kill-Lie1} in case $L$ is free of finite rank. Let $F$ and $G$ be matrices representing $f$ and $g$ in some $R$-basis of $L$. We denote by $\al\cdot G$ the matrix obtained from $G$ by applying $\al$ to all its entries. Then $f g$ and $g f$ are represented by $F( \al \cdot G)$ and $G( \al^{-1} \cdot F)$ respectively,  whence $\tr(fg) = \tr\big( F (\al\cdot G)   \big ) = \tr \big( (\al \cdot G) F \big) = \tr\big( \al \cdot (G ( \al^{-1} \cdot F)  ) \big) = \al \big( \tr(gf)\big)$. We can now establish (b): $\ka\big( f(l_1), f(l_2)\big) = \tr\big( f ( \ad l_1 \ad l_2 \, f^{-1})\big)$. Applying \eqref{kill-Lie1} with $g= \ad l_1 \ad l_2 \, f^{-1}$ shows that  $\tr\big( f ( \ad l_1 \ad l_2 \, f^{-1})\big)
= \al \big( \tr( \ad l_1 \ad l_2 \, f^{-1} f) \big) = \al\big( \ka(l_1, l_2)\big)$. \end{proof}

\begin{cor}  \label{cor:bs} In the descent setting {\rm \eqref{descent-setting}} suppose that $\fra$ is a Lie algebra whose underlying $k$-module is finitely generated and projective. Let $\ka$ be the Killing form of $\fra$. Then \sm

{\rm (a)}  $B$ is a finitely generated projective $R$-module and the unique $R$-bilinear form $\ka_B$ on $B$ associated to $\ka$ in Theorem~{\rm \ref{prop:referee}} is the Killing form of the Lie algebra $B$. If  $B$ is realized as an $R$-subalgebra of $ \fra \ot_k S$ as explained in \eqref{desformk},\footnote{Which is always possible up to $R$-isomorphism -- this is the content of (\ref{desformk})} the form $\ka_B$ is the restriction of the Killing form $\ka_S$ of $\fra \ot_k S$ to $B$.
\sm

{\rm (b)} If $\ka$ is nonsingular, then the Killing form of $B$ is non-singular. \sm

{\rm (c)} If $\ka$ is nonsingular and $\fra$ is a central $k$-algebra, then
$B$ is a central $R$-algebra, and $\IBF_R(B)$ is free $R$-module of rank 1 admitting $\ka_B$ as a basis. In particular $\IBF_R(B) = R  \ka_B$. \end{cor}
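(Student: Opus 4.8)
The plan is to reduce everything to the general descent results of \S\ref{sec:inv-bas} together with the theory of the centroid of \S\ref{invfunsec}. First I would verify the hypotheses of Theorem~\ref{prop:referee}: since $\fra$ is finitely generated and projective over $k$, Proposition~\ref{kill-Lie}(a) (applied with base ring $k$) shows that the Killing form $\ka$ is both invariant and $\bfAut(\fra)$-invariant, and $\fra$ is in particular finitely presented over $k$. Theorem~\ref{prop:referee} thus produces the $R$-bilinear form $\ka_B$, which is invariant by part (b) of that theorem and, whenever $\ka$ is nonsingular, nonsingular by part (c).

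For part (a) I would first establish that $B$ is finitely generated and projective over $R$. Fixing a faithfully flat $S$ and a trivialization $\theta \co B\ot_R S \to \fra\ot_k S$, the $S$-module $B\ot_R S \simeq \fra\ot_k S$ is finitely generated and projective (base change of such a module), and since these two properties descend along the faithfully flat extension $S/R$, so is $B$. In particular $B$ has a Killing form $\ka^B$, and by Proposition~\ref{kill-Lie}(a) its base change $(\ka^B)_S$ is the Killing form of $B\ot_R S$. To identify $\ka_B$ with $\ka^B$ it suffices, by Lemma~\ref{form-tr}(c), to check equality after base change to $S$. Now $(\ka_B)_S = \theta^*(\ka_S)$ by construction, $\ka_S$ is the Killing form of $\fra\ot_k S$ by Proposition~\ref{kill-Lie}(a), and the pullback of a Killing form along an algebra isomorphism is again the Killing form (as $\ad(\theta^{-1}x) = \theta^{-1}(\ad x)\theta$ makes the relevant traces agree); hence $\theta^*(\ka_S) = (\ka^B)_S$ and $\ka_B = \ka^B$. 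When $B$ is realized as the $R$-subalgebra of $\fra\ot_k S$ of \eqref{desformk}, the trivialization $\theta$ is the canonical isomorphism induced by the inclusion, so $\theta(b\ot 1)=b$; the identity $(\ka_B)_S=\theta^*(\ka_S)$ then reads $\ka_B(b,b') = \ka_S(b,b')$, i.e.\ $\ka_B$ is the restriction of $\ka_S$.

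Part (b) is then immediate: by (a) the Killing form of $B$ equals $\ka_B$, which is nonsingular by Theorem~\ref{prop:referee}(c). For part (c) the main work is to prove that $B$ is a \emph{central} $R$-algebra; once this is known, the conclusion follows by applying Corollary~\ref{inv-cent-co} with base ring $R$ to the nonsingular form $\ka_B\in\IBF_R(B)$, which yields $\IBF_R(B)=R\ka_B\simeq R$, free of rank $1$. To prove centrality I would argue by base change and descent of the centroid. Since $\fra$ is central and finitely presented over $k$, and $S/k$ is flat (the composite of the flat extensions $R/k$ and $S/R$), the centroid commutes with this base change, so $\cent_S(\fra\ot_k S)\simeq\cent_k(\fra)\ot_k S = S$ via the natural map; thus $\fra\ot_k S$ is central over $S$, and via $\theta$ so is $B\ot_R S$. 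Because $B$ is finitely generated and projective over $R$ by (a) and $S/R$ is flat, the centroid again commutes with base change, giving $\cent_R(B)\ot_R S\simeq\cent_S(B\ot_R S)=S$; as this identifies the base change of the natural map $R\to\cent_R(B)$ with the bijective natural map for $B\ot_R S$ over $S$, faithful flatness of $S/R$ forces $R\to\cent_R(B)$ to be an isomorphism. The one step I expect to require the most care is this compatibility of the centroid with flat base change for a finitely presented algebra, which rests on part (a) and on exhibiting $\cent_R(B)$ as the kernel of a base-change-compatible map built from $\End_R(B)$ and $\Hom_R(B\ot_R B, B)$.
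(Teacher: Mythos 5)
Your proposal is correct and follows essentially the same route as the paper's proof: faithfully flat descent of finite generation and projectivity, Proposition~\ref{kill-Lie} combined with the uniqueness assertion of Theorem~\ref{prop:referee} (equivalently, Lemma~\ref{form-tr}(c)) to identify $\ka_B$ with the Killing form of $B$ and with the restriction of $\ka_S$, nonsingularity via Theorem~\ref{prop:referee}(c)/Lemma~\ref{form-tr}(d) using that finitely generated projective implies finitely presented, and Corollary~\ref{inv-cent-co} (with $k=R$) for the rank-one statement. The only deviation is that where the paper cites \cite[Lemma~3.1]{P} for centrality of $\fra\ot_k S$ and the descent argument of \cite[Lemma 4.6(3)]{GP} for centrality of $B$, you inline a self-contained proof of both facts via the compatibility of the centroid with flat base change for finitely generated projective algebras, which is a valid substitute for those citations.
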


\begin{proof} (a) By Proposition~\ref{kill-Lie}, $\ka_S$ is the Killing form of the $S$-Lie algebra $\fra \ot_k S$. The property of being finitely generated and
projective is stable under arbitrary base change and faithfully flat descent. Hence the $R$-module $B$ is finitely generated and projective. By Proposition~\ref{kill-Lie}, $\be_S$ is the Killing form of $B_S$. Let $\theta \co B \ot_R S \to \fra \ot_k S$  be a trivialization. Since the isomorphism $\theta $ preserves Killing forms, we get $\be_S = \theta^*(\ka_S)$. Now (a) follows from the uniqueness assertion in Theorem~\ref{prop:referee}.

(b) This follows from (a) and Lemma~\ref{form-tr}(d) (we remind the reader that every finitely generated projective module is finitely presented).

(c) By  \cite[Lemma~3.1]{P} the $S$-algebra $\fra \ot_k S$ is central. The faithfully flat descent reasoning of \cite[Lemma 4.6(3)]{GP} then shows that the $R$-algebra $B$ is a central. The last claim now follows from Corollary~\ref{inv-cent-co}. \end{proof}

At this point a very natural question arises: What are interesting examples of Lie algebras for which the IBF-principle \ref{princip} holds with respect to the Killing form? To convince the reader that (over rings) one cannot expect easy answers, we will look at one of the simplest and innocent looking Lie algebras.

\begin{example}[$\lsl_2(k)$]\label{ex:sl2} Let $\lsl_2(k)$ be the Lie algebra of all traceless $2\times 2$-matrices with entries in our ring $k$. Its underlying module is free of rank $3$, with the following matrices forming the standard basis:
\[ e = \begin{pmatrix}0 & 1 \\ 0 & 0 \end{pmatrix},  \quad
h = \begin{pmatrix}1 & 0 \\ 0 & -1 \end{pmatrix},  \quad
f = \begin{pmatrix}0 & 0 \\ 1 & 0 \end{pmatrix}. \]
A straightforward calculation shows that $\ibf_k\big(\lsl_2(k)\big)$ is spanned by
\[ h \ot e, \;  e\ot h, \; f\ot h, \; h \ot f, \; 2 e\ot e, \; 2 f\ot f, \; h \ot h - 2 f\ot e, \; h \ot h - 2 e \ot f. \]
Hence
\[ \scIBF_k\big( \lsl_2(k)\big) = \big((k/2k) \overline{e\ot e} \big)
  \; \oplus \; \big((k/2k) \overline{f \ot f} \big)\; \oplus \; \Span_k\{ \overline{e\ot f}, \overline{f \ot e}\}.
\]
Consequently: \begin{itemize}
  \item If $2k=0$, then $\scIBF_k\big( \lsl_2(k)\big)$ is free of rank $4$, with basis $\{\overline{e\ot e},\, \overline{f\ot f},\, \overline{e\ot f}, \, \overline{f\ot e}\}$.
  \item If $2\in k^\times$, i.e., $2\in k$ is invertible, then $\scIBF_k\big( \lsl_2(k)\big)$ is free of rank $1$, e.g.\ with basis $\{\overline{h\ot h}\}$.
\end{itemize}
Using the isomorphism \eqref{iso-hom-ibf-1} and the description of $\scIBF\big( \lsl_2(k)\big)$ above, we can define an invariant bilinear form $\ga\in \IBF_k \big( \lsl_2(k)\big)$, sometimes called the \textit{normalized Killing form} or the \textit{normalized invariant form}, by
\[ \ga(e, e ) = 0 = \ga( f , f) , \quad
      \ga( e, f) = 1 = \ga( f, e). \]
Note that $\ga(h, h) = 2$ and that all other values of $\ga$ on the standard basis of $\lsl_2(k)$ are zero, in particular $\ga$ is symmetric.
The description of $\scIBF_k\big( \lsl_2(k)\big)$ above implies that:
\[\hbox{\it If $2$ is not a zero divisor in $k$, then
    $\IBF_k\big( \lsl_2(k)\big) = k  \ga$ is free of rank $1$.}\]
Moreover, by calculating the discriminant of $\ga$ one obtains:
\[ \hbox{\it $\ga$ is nonsingular} \iff 2\in k^\times \iff
 \hbox{\it the IBF-principle holds for $(\lsl_2(k), \ga)$.}\]
In this case, $\ga$ is $\bfAut\big(\lsl_2(k)\big)$-invariant, which can be seen by noting that $(\ad x )^3 - 2\ga(x,x) \ad x= 0$ is the generic minimal polynomial of $\lsl_2(k)$. \sm

It is straightforward that  $12\ga$ is the Killing form of $\lsl_2(k)$. In particular, the Killing form vanishes if $2k=0$ (not surprising since then $\lsl_2(k)$ is a $2$-step nilpotent Lie algebra) or if $3k=0$ (somewhat surprising since $\lsl_2(k)$ is a simple Lie algebra when $k$ is a field of characteristic $3$). The conclusion is that for the setting of this paper the normalized Killing form $\ga$ is better behaved than the Killing form itself. A case in point is a revised version of Corollary~\ref{cor:bs} for $\fra=\lsl_2(k)$ and $2\in k^\times$. Since $\lsl_2(k)$ is then central,\footnote{Centrality only requires that $2$ not be a zero divisor.} the proof of loc.\ cit.\ shows that:
\begin{itemize}
  \item If $2\in k^\times$, any $S/R$-form $B$ of $\lsl_2(R)$ is central and  has a nonsingular invariant bilinear form $\be$ (not necessarily the Killing form), for which $ \IBF_R(B) = R \be$ is free of rank $1$.
\end{itemize}
\sm

\noindent It is instructive to summarize what we have shown above for the  special case $k=\ZZ$. \begin{itemize}
  \item The $k$-module $\scIBF_\ZZ\big( \lsl_2(\ZZ)\big)$ is neither projective nor cyclic, in particular, the IBF-principle does not hold for $\lsl_2(\ZZ)$. Yet $\IBF_\ZZ\big( \lsl_2(\ZZ) \big) = \ZZ  \ga$ is free of rank $1$.

   \item All invariant bilinear forms of $\lsl_2(\ZZ)$ are symmetric (even though $\lsl_2(\ZZ)$ is not perfect, cf.\ Remark~\ref{rem:perfect}).
   \item All non-zero invariant bilinear forms of $\lsl_2(\ZZ)$ are nondegenerate, but none of them is nonsingular.
\end{itemize}
\end{example}

\begin{rem}[Generalizations of Example~\ref{ex:sl2}]
We note that Example~\ref{ex:sl2} can be generalized by replacing $\lsl_2(k) = \lsl_2(\ZZ) \ot_\ZZ k$ by $\scG \ot_\ZZ k$ where $\scG$ is the Lie algebra of a split simple simply-connected Chevalley-Demazure group scheme. In terms of Lie algebras, $\scG$ is a Chevalley order of a split simple Lie algebra $(\g, \frh)$ over $\QQ$, say with root system $\De$, which is compatible with the root space decomposition of $(\g, \frh)$ and satisfies $\scG \cap \frh= \Span_\ZZ\{ h_\al : \al \in \De\}$ (with the standard notation). In this setting the existence of an invariant bilinear form $\ga$ as above follows from \cite{SS, GN}. It is uniquely determined by the condition $\ga(h_\al, h_\al) = 2$ for any long root $\al$.  Details will be left to the reader. \end{rem}

In what follows we restrict our presentation to base fields of characteristic $0$. To abide by standard notation we denote our algebra $\fra$, which is now a finite-dimensional semisimple Lie algebra defined over a field $k$ of characteristic $0$, by $\g$. We are interested in twisted forms of $\g \ot_k R$ for some $R\in \kalg$. In the case when $k$ is algebraically closed, $\g$ is simple and $R$ is the Laurent polynomial ring $k[t_1^{\pm 1}, \dots ,t_n^{\pm1}]$, the twisted forms in question are related to the affine Kac-Moody Lie algebras (the case $n=1$) and more generally to multiloop algebras (see \cite{ABFP2,GP,n:persp, P} and \S\ref{sec:graded} for further details and references).

\begin{theorem}\label{teo:Lie-desc}
Let $\g$ be a finite-dimensional semisimple Lie algebra over a field $k$ of characteristic $0$.  Let $B $ be a twisted form of $\g\ot_k R$, split by a faithfully flat extension $S/R$. \sm

{\rm (a)} Then $B$ is a finitely generated projective $R$-module and perfect as a Lie algebra. The Killing form of the $R$-algebra $B$ coincides with the bilinear form $\ka_B$ associated to the Killing form $\ka$ of $\g$ in Theorem~{\rm \ref{prop:referee}}. In particular the Killing form of $B$  is nonsingular and $\bfAut(B)$-invariant. If $B$ is realized as an $R$-subalgebra of $ \g \ot_k S$, the Killing form of $B$ is the restriction of the Killing form $\ka_{\g \ot S}$ to $B$. \sm

{\rm (b)} Assume henceforth that $\g$ is central, hence central-simple. Then $B$ is a central $R$-algebra, $\IBF_R(B)$ is a free $R$-module of rank $1$ admitting $\ka_B$ as a basis, and $(B, \ka_B)$ satisfies the IBF-principle~{\rm \ref{princip}}.
\end{theorem}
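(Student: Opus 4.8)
The plan is to realize this theorem as an assembly of the general descent machinery of the previous sections applied to $\fra=\g$, the only genuine Lie-theoretic input being Cartan's semisimplicity criterion. First I would check that the descent setting~\eqref{descent-setting} is in force: since $k$ is a field, $R$ is a free (hence flat) $k$-module, so $R/k$ is flat; and since $\g$ is finite-dimensional over the field $k$, its underlying $k$-module is finitely generated and projective. By Cartan's criterion the Killing form $\ka$ of the semisimple Lie algebra $\g$ is nonsingular (for a finite-dimensional algebra over a field nondegeneracy and nonsingularity coincide). With these observations, Corollary~\ref{cor:bs}(a) gives at once that $B$ is a finitely generated projective $R$-module, that the Killing form of $B$ equals the descended form $\ka_B$ of Theorem~\ref{prop:referee}, and that $\ka_B$ is the restriction of $\ka_{\g\ot S}$ when $B$ is realized inside $\g\ot_k S$; Corollary~\ref{cor:bs}(b) then yields nonsingularity of the Killing form of $B$. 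The $\bfAut(B)$-invariance of $\ka_B$ is part of the conclusion of Theorem~\ref{prop:referee}(a), applied to the $\bfAut(\g)$-invariant form $\ka$ (which is $\bfAut(\g)$-invariant by Proposition~\ref{kill-Lie}(a)).

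It remains, for part (a), to prove that $B$ is perfect. For this I would use the functor $F\co (R,A)\mapsto \big(R,A/\scD(A)\big)$ of Example~\ref{derived}, which is stable under base change. Since $\g$ is perfect, so is $\g\ot_k S$, so that $F_S(\g\ot_k S)=0$ and hence $F_S(B\ot_R S)\cong 0$ via any trivializing isomorphism $B\ot_R S\cong\g\ot_k S$. Stability under base change identifies $F_R(B)\ot_R S$ with $F_S(B\ot_R S)$, which is $0$, and faithful flatness of $S/R$ then forces $F_R(B)=B/\scD(B)=0$, i.e.\ $B=[B,B]$ is perfect.

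For part (b), I would first note that a central semisimple Lie algebra is central-simple: the centroid of a semisimple Lie algebra is a finite product of fields, one for each simple summand, and its being the field $k$ forces $\g$ to be simple. Centrality of $\g$ together with nonsingularity of $\ka$ lets me invoke Corollary~\ref{cor:bs}(c), yielding that $B$ is a central $R$-algebra and that $\IBF_R(B)=R\,\ka_B$ is free of rank $1$ with basis $\ka_B$. To obtain the IBF-principle for $(B,\ka_B)$ I would pass through the split base case and descend: the IBF-principle holds for $(\g,\ka)$ by Proposition~\ref{trans}, whose hypotheses are all met over the field $k$ — $\g$ is central, $\ka$ is nonsingular with $\ka(\g,\g)=k$ (a nonzero $k$-submodule of the field $k$), and $\scIBF_k(\g)$ is projective because every module over a field is free. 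Finally, since $\g$ is finitely presented over $k$ and $\ka\in\IBF_k(\g)$ is $\bfAut(\g)$-invariant, Corollary~\ref{missing} transfers the IBF-principle from $(\g,\ka)$ to $(B,\ka_B)$, completing the proof.

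The bulk of the work has thus already been done in the general theory; the only steps requiring genuine care are the verification that every hypothesis of the descent setting and of Corollaries~\ref{cor:bs} and~\ref{missing} is automatically satisfied in the present field-theoretic situation, together with the small self-contained descent argument for perfectness. I expect the main conceptual obstacle to be organizing the logical dependence correctly: one must establish the IBF-principle for the \emph{split} algebra $\g$ over $k$ first — where projectivity of $\scIBF_k(\g)$ comes for free — and only then descend, rather than attempting to verify the IBF-principle for $B$ directly over the possibly complicated ring $R$.
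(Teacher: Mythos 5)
Your proposal is correct and follows essentially the same route as the paper's proof: parts (a) and the first half of (b) are read off from Corollary~\ref{cor:bs} (resting on Theorem~\ref{prop:referee} and Proposition~\ref{kill-Lie}), and the IBF-principle is transferred from $(\g,\ka)$ --- established via Proposition~\ref{trans}, whose hypotheses you rightly note are automatic over a field --- to $(B,\ka_B)$ by Corollary~\ref{missing}, exactly as in the paper. Your only deviation is cosmetic: where the paper simply asserts that perfectness is stable under base change and faithfully flat descent, you implement that assertion explicitly through the base-change-stable functor $A\mapsto A/\scD(A)$ of Example~\ref{derived}, a valid and slightly more detailed rendering of the same argument.
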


\begin{proof} With the exception of the perfectness statement, (a) and the first part of (b) is a re-statement of Corollary~\ref{cor:bs}. But being perfect is a property (of arbitrary algebras) which is stable under arbitrary base change and faithfully flat descent. Since $B \ot_R S \simeq \g \ot_k S$ and the latter is perfect, $B$ is perfect. That $(B,\ka_B)$ satisfies the IBF-principle is a special case of Corollary~\ref{missing} keeping in mind that $(\g, \ka)$ satisfies the IBF-principle in view of Proposition~\ref{trans}.
\end{proof}

\begin{rem}\label{zelma} In the special case when $R=S$ and $B=\fg\ot_k R$, our result says that
\begin{equation} \label{zelma1}
    \scIBF(\g \ot_k R) \simeq R \simeq \scIBF_k(\g) \ot_k R,
    \end{equation}
This formula is also a special case of \cite[Th.~4.1]{zus} which, using methods from Lie algebra homology,  determines the ``predual" of the space of symmetric bilinear forms of a Lie algebra of type $L\ot_k R$ for $k$ a field of characteristic $\ne 2$ and $L$ any Lie algebra over $k$. Assuming for comparison reasons that $L$ is perfect, we know (Remark~\ref{rem:perfect}) that all invariant bilinear maps are symmetric and thus \cite[Th.~4.1]{zus} becomes
\eqref{zelma1} for Lie algebras of type $L\ot_k R$. We emphasize that the approach of \cite{zus} {\it cannot} be applied to the case of twisted forms of $\g \ot_k R$. Developing methods that would apply to these algebras was the original motivation for our work. As already observed, such twisted algebras already arise in the affine Kac-Moody setting and are crucial for EALA theory.

An immediate consequence of \eqref{zelma1} is that every invariant bilinear form $\be\in \IBF_{(R,k)}(\g \ot_k R) = \IBF_k(\g \ot_k R)$ has the form $\vphi \circ \ka_R$ for a unique $\vphi \in R^*=\Hom_k(R,k)$, i.e., $\be(x_1 \ot r_1, x_2 \ot r_2) = \ka (x_1, x_2)\, \vphi(r_1 r_2)$ for $x_i \in \g$ and $r_i \in R$. This latter fact has recently been re-proven in \cite[Lemma~2.3]{MSZ} in case $k$ is an algebraically closed field of characteristic $0$, using the structure theory of $\g$.

The untwisted case was, out of necessity, the first objective of our work. The methods to be developed, however, had to be compatible with descent theory so that results about twisted algebras could be obtained. In retrospect, we ``knew"  that the functor on $k$-spaces $\IBF_{k}(\fg \otimes_k S,-)$ is represented by $S$ (one can reinterpret \cite{zus} or \cite{MSZ} this way). But how does one recover $S$ from $\fg \otimes_k S$? The answer is as its centroid. By descent, the centroid of $B$ is in this case naturally isomorphic to $R$. Our result shows that the representability of $\IBF_k(B,-)$ in terms of the centroid is indeed the correct point of view.
\end{rem}

\begin{rem}  For crucial use in \cite{pps-new}, we note the following.
Since the IBF-principle holds by Theorem~\ref{teo:Lie-desc}(b), composing the isomorphism \eqref{princip1} with the inverse of the isomorphism given in Lemma \ref{inv-cent}, we have an isomorphism $\Hom_k(R,V)\to  \Cent_R\big(B,\Hom_k(B,V)\big)$,  $\vphi\mapsto \widetilde\vphi$ such that $ \widetilde\vphi(b)(b')=\vphi\big(\ka_B(b,b')\big)$.
\end{rem}

\subsection{Unital algebras} In this subsection we will discuss invariant bilinear forms of unital algebras $B$ defined over some $R\in \kalg$. To do so, we will use the {\em associator module\/} and {\em commutator module\/} defined for an arbitrary algebra $B$ by
\[ (B,B,B) = \Span_\ZZ\{ (a,b,c,) : a,b,c\in B \} \quad \hbox{and} \quad
[B, B] = \Span_\ZZ\{ [a,b] : a,b\in B\}
\]
respectively, where $(a,b,c) = (ab)c - a (bc)$ is the associator and $[a,b] = ab-ba$ is the commutator in $B$.\footnote{If $B$ happens to be a Lie algebra, the commutator as defined here is twice the Lie algebra product. This notational conflict should not cause any problems since in the following we will employ the notation $[a,b]$ for non-Lie algebras only.} It is immediate that $(B,B,B)$ and $[B,B]$ are $R$-submodules of $B$. We define
\[ \ac(B) = (B,B,B) + [B,B] \quad \hbox{and} \quad \AC(B) = B / \ac(B).\]
Let $1_B \in B$ be the identity element of $B$. Thus $b \, 1_B  = b = 1_B \, b$ for all $b\in B$. A unital algebra is perfect, whence $\IBF_{(R,k)} (B; V) = \IBF_k(B; V)$ for any $k$-module $V$ by Remark~\ref{rem:perfect}.\sm

For convenience for the remainder of this section we will denote the identity element of $B$ by $1$.

\begin{lem}\label{lem:uni-gen} Let $B$ be a unital $R$-algebra. Then the multiplication map $\mu \co B \ot_R B \to B$, $\mu(a \ot b) = ab$, induces an isomorphism
\[ \bar \mu \co \scIBF_R(B) \to \AC(B), \quad \bar \mu(\overline{a\ot b}) = \overline{ ab} \]
with inverse given by $\bar a \mapsto \overline{1 \ot a} = \overline{a \ot 1}$. Hence, for any $k$-module $V$ the natural map
\[ \Hom_k(\AC(B), V) \to \IBF_k(B;V), \]
which assigns to $\vphi \in \Hom_k(\AC(B), V)$ the bilinear function $(a,b) \mapsto \vphi(\overline{ab})$, is an isomorphism of $R$-modules. Its inverse is given by assigning to $\be$ the linear function $\bar b \mapsto \be(b,1)$, where $b\in B$.
\end{lem}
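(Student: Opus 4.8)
The plan is to construct $\bar\mu$ and a candidate inverse $\bar\psi$ explicitly, verify that each is well defined, check they are mutually inverse, and then deduce the $\Hom_k$ statement formally from the representability isomorphism \eqref{iso-hom-ibf-1}. First I would note that $\mu$ is $R$-linear, so composing with the canonical projection $B \to \AC(B)$ gives an $R$-linear map $B \ot_R B \to \AC(B)$, $a \ot b \mapsto \overline{ab}$. To see that it descends to $\scIBF_R(B)$ I must check it annihilates the generators of $\sfibf_R(B)$. The first family $ab \ot c - a \ot bc$ maps to $\overline{(ab)c - a(bc)} = \overline{(a,b,c)} = 0$. The second family $ab \ot c - b \ot ca$ maps to $\overline{(ab)c - b(ca)}$, and here the key identity is
\[ (ab)c - b(ca) = [a, bc] + (a,b,c) + (b,c,a), \]
obtained from $(ab)c = a(bc) + (a,b,c)$ and $b(ca) = (bc)a - (b,c,a)$; each summand lies in $\ac(B)$, so this too maps to $0$. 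Hence $\bar\mu \co \scIBF_R(B) \to \AC(B)$, $\overline{a \ot b} \mapsto \overline{ab}$, is a well-defined $R$-linear map.

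For the inverse I would first record the relations that unitality forces in $\scIBF_R(B)$. Specializing the defining relations \eqref{def:repr-0} at the identity yields $\overline{a \ot b} = \overline{1 \ot ab}$ (from $ab \ot c - a \ot bc$ with the first entry replaced by $1$) and $\overline{a \ot b} = \overline{1 \ot ba}$ (from $ab \ot c - b \ot ca$ with the middle entry replaced by $1$); in particular $\overline{a \ot 1} = \overline{1 \ot a}$. The $R$-linear map $B \to \scIBF_R(B)$, $a \mapsto \overline{1 \ot a}$, then annihilates $\ac(B)$: for commutators, $\overline{1 \ot ab} = \overline{a \ot b} = \overline{1 \ot ba}$ gives $\overline{1 \ot [a,b]} = 0$; for associators, $\overline{1 \ot (ab)c} = \overline{ab \ot c} = \overline{a \ot bc} = \overline{1 \ot a(bc)}$ gives $\overline{1 \ot (a,b,c)} = 0$. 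Therefore it factors through an $R$-linear map $\bar\psi \co \AC(B) \to \scIBF_R(B)$, $\bar a \mapsto \overline{1 \ot a}$.

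It then remains to check that $\bar\mu$ and $\bar\psi$ are mutually inverse. Indeed $\bar\mu(\bar\psi(\bar a)) = \bar\mu(\overline{1 \ot a}) = \overline{1 \cdot a} = \bar a$, while $\bar\psi(\bar\mu(\overline{a \ot b})) = \bar\psi(\overline{ab}) = \overline{1 \ot ab} = \overline{a \ot b}$ by the relation recorded above. For the final assertion I would combine the isomorphism $\bar\mu$ with \eqref{iso-hom-ibf-1}. Since $B$ is unital, hence perfect, Remark~\ref{rem:perfect} gives $\IBF_{(R,k)}(B;V) = \IBF_k(B;V)$, and \eqref{iso-hom-ibf-1} identifies $\Hom_k(\scIBF_R(B), V)$ with this module via $f \mapsto f \circ \be_\scu$. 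Precomposing with $\bar\mu$ yields the $R$-linear isomorphism $\Hom_k(\AC(B), V) \xrightarrow{\simeq} \IBF_k(B;V)$ carrying $\vphi$ to $(a,b) \mapsto \vphi(\bar\mu(\overline{a \ot b})) = \vphi(\overline{ab})$; its inverse carries $\be$ to $\bar\be \circ \bar\psi$, that is to $\bar b \mapsto \bar\be(\overline{b \ot 1}) = \be(b,1)$. The only genuine work lies in the two well-definedness checks—that $\mu$ carries $\sfibf_R(B)$ into $\ac(B)$, and dually that $a \mapsto \overline{1 \ot a}$ kills $\ac(B)$—for which the displayed identity and the unitality relations are exactly what is needed; everything afterwards is formal.
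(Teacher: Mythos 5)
Your proof is correct and follows essentially the same route as the paper's: both construct the inverse from $a \mapsto \overline{1 \ot a}$, verify that it kills $\ac(B)$ using the unitality relations $\overline{a\ot b} = \overline{1 \ot ab} = \overline{1\ot ba}$, and deduce the $\Hom_k$ statement formally from the representability isomorphism together with Remark~\ref{rem:perfect}. The only cosmetic differences are that the paper invokes the simplified spanning set of \eqref{def:repr-0} (valid for unital $B$), which makes $\mu\big(\sfibf_R(B)\big) = \ac(B)$ and hence surjectivity of $\bar\mu$ immediate so that only the composition $\bar\nu \circ \bar\mu = \Id$ needs checking, whereas you work with the original generators via the identity $(ab)c - b(ca) = [a,bc] + (a,b,c) + (b,c,a)$ and verify both compositions explicitly.
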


\begin{proof} By \eqref{def:repr-0}, $\ibf_R(B)$ is spanned by elements of the form $ab \ot c - a \ot bc$ and $a\ot b - b \ot a$. It is clear that $\mu\big(\ibf_R(B)\big) = \ac(B)$. Hence $\bar \mu$ is well-defined and surjective.
Let $\nu\co B \to B \ot_k B$ be defined by $\nu(a) = 1  \ot a$. Then we have
\begin{align*}
  \nu\big( (a,b,c)\big) &= 1  \ot (ab)c - 1  \ot a(bc) \equiv
        ab \ot c - a \ot bc \equiv 0 \mod \ibf_R(B), \quad \hbox{and}\\
   \nu\big( [a,b]\big) &= 1  \ot ab - 1  \ot ba \equiv a\ot b - b \ot a
 \equiv 0 \mod \ibf_R(B).
\end{align*}
We thus get a well-defined $k$-linear map $\bar \nu \co \AC(B) \to \scIBF_R(B)$  satisfying $\bar \nu( \bar b) = \overline{b\ot 1} = \overline{1\ot b}$. Because of $(\nu \circ \mu)(a \ot b) = 1  \ot ab = (1  \ot ab - 1  a \ot b) + a \ot b \equiv a \ot b \mod \ibf_R(B)$, we have $\bar \nu \circ \bar \mu = \Id_{\scIBF(B)}$, proving injectivity and thus bijectivity of $\bar \mu$.
Under the isomorphism $\bar \mu$, the universal bilinear map $\be_\scu\co B \times B \to \scIBF_R(B)$ becomes $\be_{\scu, u} \co B \times B \to \AC(B)$ with $\be_{\scu,u}(a,b) = \overline{ab}$. In view of \eqref{iso-hom-ibf-2} this implies the last claim.\end{proof}

\begin{cor}\label{cor:unit}
Let $B$ be a unital $R$-algebra and assume that $B = Rb_0 \oplus \ac(B)$ for some $b_0 \in B$ where $R b_0$ is free with basis $\{b_0\}$. Let $\pi \co B \to R$ be defined by $b = \pi(b) b_0 \oplus b_\ac$ where $b_\ac \in \ac(B)$, and define \[ \be_0 \co B \times B \to R, \qquad \be_0(a,b) = \pi(ab).\]
Then $\be_0 \in \IBF_R(B)$, and $(B,\be_0)$ satisfies the IBF-principle. Furthermore
\[ \tilde \be_0 \co \AC(B) \to R, \quad \bar b  \mapsto \be_0(b,1)
\]
is a well-defined $R$-module isomorphism. \end{cor}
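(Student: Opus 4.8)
The plan is to reduce everything to the isomorphism $\bar\mu\co \scIBF_R(B) \to \AC(B)$ of Lemma~\ref{lem:uni-gen}, observing at the outset that $\be_0 = \pi \circ \mu$, where $\mu\co B\times B \to B$ is the multiplication. Since $\mu$ is $R$-bilinear and $\pi$ is $R$-linear, $\be_0$ is automatically an $R$-bilinear form, namely $\be_0(ra,b)=\pi\big((ra)b\big)=r\pi(ab)=r\be_0(a,b)$ and likewise in the second slot, so the genuine content of the first assertion is only its invariance.

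First I would verify $\be_0 \in \IBF_R(B)$. The key point is that $\pi$ has kernel $\ac(B) = (B,B,B) + [B,B]$ and therefore annihilates every associator and every commutator. For the first invariance identity, $\be_0(ab,c) - \be_0(a,bc) = \pi\big((ab)c - a(bc)\big) = \pi\big((a,b,c)\big) = 0$. For the second, I would write $\be_0(a,bc) = \pi\big(a(bc)\big)$ and push it through the congruences $a(bc) \equiv (bc)a \pmod{[B,B]}$ and $(bc)a \equiv b(ca) \pmod{(B,B,B)}$, obtaining $\pi\big(a(bc)\big) = \pi\big(b(ca)\big) = \be_0(b,ca)$. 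Together these establish \eqref{def:inb1}, so $\be_0 \in \IBF_R(B)$.

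Next I would treat the map $\tilde\be_0$. By definition $\tilde\be_0(\bar b) = \be_0(b,1) = \pi(b)$, so $\tilde\be_0$ is precisely the $R$-linear map $\AC(B) = B/\ac(B) \to R$ induced by $\pi$; it is well defined because $\ker\pi = \ac(B)$ (this is also why the inverse map of Lemma~\ref{lem:uni-gen} makes sense). The hypothesis $B = Rb_0 \oplus \ac(B)$ with $Rb_0$ free on $b_0$ makes $\pi$ surjective with kernel exactly $\ac(B)$, so $\tilde\be_0$ is an $R$-module isomorphism. This proves the last assertion.

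Finally, for the IBF-principle I would use the factorization $\bar\be_0 = \tilde\be_0 \circ \bar\mu$, verified on generators by $(\tilde\be_0 \circ \bar\mu)(\overline{a\ot b}) = \tilde\be_0(\overline{ab}) = \pi(ab) = \be_0(a,b) = \bar\be_0(\overline{a\ot b})$. Since $\bar\mu$ is an isomorphism by Lemma~\ref{lem:uni-gen} and $\tilde\be_0$ is an isomorphism by the previous paragraph, the composite $\bar\be_0\co \scIBF_R(B) \to R$ is an $R$-module isomorphism, which is exactly the hypothesis of the IBF-principle~\ref{princip} for $(B,\be_0)$. There is no serious obstacle here: the only computation is the threefold invariance, and even that is routine once one notes that $\pi$ kills $\ac(B)$; the conceptual weight is carried by Lemma~\ref{lem:uni-gen} together with the decomposition hypothesis, which jointly force $\scIBF_R(B)$ to be free of rank one with $\be_0$ as distinguished generator.
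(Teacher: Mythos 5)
Your proof is correct and follows exactly the route the paper intends: the paper's own proof is simply declared ``straightforward,'' the corollary being an immediate consequence of Lemma~\ref{lem:uni-gen}, and your factorization $\bar\be_0 = \tilde\be_0 \circ \bar\mu$ together with the observation that $\pi$ kills $\ac(B)$ (giving invariance and well-definedness of $\tilde\be_0$) is precisely the intended filling-in of those details. Nothing is missing and no alternative machinery is introduced.
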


\begin{proof} The proof is straightforward.
\end{proof}

\subsection{Azumaya algebras} \label{subsec:Azu}
This subsection fits within the descent setting \eqref{descent-setting}: $k$ is base ring, $R\in \kalg$ is flat as a $k$-module (for example $k=R$), and $S\in \Ralg$ is a faithfully flat $R$-module. We let $\fra= M_n(k)$. Then our $S/R$-form $B$ of $M_n(k) \ot_k R = M_n(R)$ is an Azumaya algebra over $R$ of constant rank $n^2$.

We start by recording some facts about $\fra$. As a unital algebra, $\fra$ is perfect. It is also well-known that $\fra$ is central. Moreover, $\fra$ has a natural invariant bilinear form, the trace form $\ka$ defined by
\[
           \ka(x,y) = \tr(xy)
\]
where this last is the usual trace of the matrix $xy$. It is easy to see (using the standard dual basis of the elementary matrices $E_{ij}$) that $\ka$ is nonsingular. Moreover, $\ka$ is $\bfAut(\fra)$-invariant. Indeed, since $\fra \ot_k K = M_n(K)$ for any $K\in \kalg$, it suffices to verify that $\ka$ is automorphism-invariant. Thus let $\si \in \Aut_k(\fra)$ and $x,y \in \fra$.  To show that $xy$ and $\si(x)\si(y)$ have the same trace, it is enough to prove that for all $\fp \in \Spec(k)$  the two elements $(xy)_\fp$ and $\big( \si(x)\si(y)\big)_\fp$ of $M_n(k_\fp)$ have the same trace. Clearly $(xy)_\fp = x_\fp y_\fp$ and $\big( \si(x)\si(y)\big)_\fp  =\si_\fp(x_\fp) \si_\fp(y_\fp)$ where $\si_\fp = \si \ot \Id_{k_\fp}$. We may therefore assume that $k$ is a local ring. But then, by the Skolem-Noether Theorem for local rings (\cite[IV, Cor.~1.3]{KO}), $\si$ is given by conjugation by an invertible matrix $M\in \GL_n(k)$, whence $\si(x)\si(y) = M xy M^{-1}$, and so clearly $xy$ and $\si(x)\si(y)$ have the same trace. We also have
\[ \fra = k E_{11} \oplus [\fra,\fra], \quad [\fra,\fra]= \{ x\in \fra : \tr(x) = 0\} = \ac(\fra)\]
since any $x=\sum_{i,j} x_{ij} E_{ij}$ can be uniquely written as
\begin{equation} \label{eq:dec}  x= \big(x_{11} + \tsum_{1<i} x_{ii} \big) E_{11} + \tsum_{1 < i} x_{ii} (E_{ii} - E_{11}) + \tsum_{i \ne j} x_{ij} E_{ij}\end{equation}
and $[\fra,\fra]$ is spanned by matrices of type $[E_{ii}, E_{ij}]=E_{ij}$ and $[E_{ij}, E_{ji}] = E_{ii} - E_{jj} = (E_{ii} - E_{11}) - (E_{jj}-E_{11})$ for $i\ne j$. Formula \eqref{eq:dec} implies that the trace form $\ka$ is the bilinear form of Corollary~\ref{cor:unit}. Hence
$(\fra, \ka)$ satisfies the IBF-principle.
%We now have verified all the assumptions of Corollary~\ref{missing}, and thus %obtain the following.

\begin{theorem} \label{teo:Azu}
Let $B$ be an $S/R$-form of $M_n(R)$ and let $\ka_B$ be the bilinear form  associated to the trace form $\ka$ of $M_n(k)$ in Theorem~{\rm \ref{prop:referee}}. \sm

{\rm (a)} Then $\ka_B$ is a nonsingular, invariant and $\bfAut(B)$-invariant bilinear form and a basis of $\IBF_R(B)$. \sm

{\rm (b)} $\scIBF_R(B) \simeq \AC(B) \simeq R$, and the map $\scIBF_R(B) \mapsto R$, $\overline{b \ot b'} \mapsto \ka_B(b,b')$ is an isomorphism of $R$-modules. Hence $(B,\ka_B)$ satisfies the IBF-principle~{\rm \ref{princip}}. \sm

{\rm (c)} If $B$ is realized as an $R$-subalgebra of $M_n(S)$, see Remark~{\rm \ref{over}}, $\ka_B$ coincides with the restriction of the trace form of $M_n(S)$ to $B$.
\end{theorem}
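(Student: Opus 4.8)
The plan is to recognize this theorem as a direct application of the descent machinery to the pair $(\fra, \ka) = (M_n(k), \text{trace form})$, for which all the substantive properties have already been verified in the paragraph preceding the statement: the underlying $k$-module of $\fra$ is free of rank $n^2$, hence finitely presented; $\ka \in \IBF_k(\fra)$ is nonsingular and $\bfAut(\fra)$-invariant; $\fra$ is central and unital; and $(\fra, \ka)$ satisfies the IBF-principle. With these in hand, parts (a) and (b) follow by feeding them into Theorem~\ref{prop:referee} and Corollary~\ref{missing}, and part (c) by unwinding the definition of $\ka_B$.

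For the three listed properties of $\ka_B$ in (a), I would invoke Theorem~\ref{prop:referee} verbatim: since $\ka$ is $\bfAut(\fra)$-invariant, part (a) of that theorem yields $\ka_B$ together with its $\bfAut(B)$-invariance; since $\ka$ is invariant, part (b) gives invariance of $\ka_B$; and since $\fra$ is finitely presented and $\ka$ is nonsingular, part (c) gives that $\ka_B$ is nonsingular. The remaining assertion of (a), that $\ka_B$ is a basis of $\IBF_R(B)$, I would read off from (b): once the IBF-principle holds for $(B, \ka_B)$, conclusion \ref{princip}(b) says every element of $\IBF_R(B)$ equals $r\ka_B$ for a unique $r \in R$, i.e.\ $\IBF_R(B) = R\ka_B$ is free of rank one on $\ka_B$. (Alternatively one could establish centrality of $B$ by the faithfully flat descent argument of Corollary~\ref{cor:bs}(c) and apply Corollary~\ref{inv-cent-co}, but routing through (b) is cleaner.)

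For (b) the key step is Corollary~\ref{missing}, whose hypotheses---$\fra$ finitely presented over $k$, $\ka \in \IBF_k(\fra)$ being $\bfAut(\fra)$-invariant, and $(\fra, \ka)$ satisfying the IBF-principle---all hold; it yields that $(B, \ka_B)$ satisfies the IBF-principle, which by definition is exactly the statement that $\overline{\ka_B} \co \scIBF_R(B) \to R$, $\overline{b \ot b'} \mapsto \ka_B(b, b')$, is an $R$-module isomorphism. Since $B$ is unital (the unit of $M_n(R)$ is $\bfAut$-invariant and descends), Lemma~\ref{lem:uni-gen} furnishes $\scIBF_R(B) \simeq \AC(B)$; composing with the previous isomorphism gives the chain $\scIBF_R(B) \simeq \AC(B) \simeq R$ asserted in (b).

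For (c) I would unwind $\ka_B$ exactly as in the proof of Theorem~\ref{prop:referee}(a). Realizing $B$ as the subalgebra $L = \theta(B \ot 1) \subset \fra \ot_k S = M_n(S)$ of \eqref{desformk}, and identifying $b \in B$ with its image $\theta(b \ot 1) \in L$, the relation $(\ka_B)_S = \theta^*(\ka_S)$ together with the base-change identity $(\ka_B)_S(b \ot 1, b' \ot 1) = \ka_B(b, b')$ gives $\ka_B(b, b') = \ka_S\big(\theta(b \ot 1), \theta(b' \ot 1)\big)$, i.e.\ the value of $\ka_S$ on the images of $b, b'$ in $L$. Since trace commutes with the scalar extension $M_n(k) \to M_n(S)$, the form $\ka_S$ is precisely the trace form of $M_n(S)$, so $\ka_B$ is its restriction to $B = L$. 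I expect no serious obstacle: the real content sits in the preliminary verification that $(\fra, \ka)$ is nonsingular, central, $\bfAut$-invariant and satisfies the IBF-principle. The one point that rewards care is (c)---checking that the abstract trivialization $\theta$ produced by descent restricts to the concrete inclusion $B = L \hookrightarrow M_n(S)$, so that $\ka_S$ genuinely restricts to $\ka_B$---and this is the exact matrix analogue of Corollary~\ref{cor:bs}(a), whose proof serves as the template.
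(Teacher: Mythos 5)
Your proposal is correct and follows essentially the same route as the paper: parts (a) and (b) are obtained exactly as in the paper from Theorem~\ref{prop:referee}, Corollary~\ref{missing}, and the unital-algebra machinery of Lemma~\ref{lem:uni-gen}/Corollary~\ref{cor:unit}, with the preliminary verifications for $(M_n(k),\ka)$ already done in \S\ref{subsec:Azu}. For (c) you unwind the definition $(\ka_B)_S = \theta^*(\ka_S)$ at the canonical trivialization of $B$ realized inside $M_n(S)$, whereas the paper re-runs the equalizer argument \eqref{azumi1} for the restricted trace form and then invokes uniqueness (or Lemma~\ref{form-tr}(c)); these are the same argument in two presentations, since the value-in-$R$ step you rely on is precisely what made the restriction in the proof of Theorem~\ref{prop:referee}(a) well defined.
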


\begin{proof}
(a) and (b) follow from Theorem~\ref{prop:referee}, Corollary~\ref{missing} and Corollary~\ref{cor:unit}. For the proof of (c) one uses the automorphism invariance of the trace of $M_n(S'')$ and the reasoning in \eqref{azumi1} to conclude that the restriction $\la$ of the trace form of $M_n(S)$ to $B$ has values in $R$. Since $\la_S$ is the trace form of $M_n(S)$ and thus coincides with $(\ka_B)_S$, we get $\la = \ka_B$ from uniqueness in Theorem~\ref{prop:referee} (or from Lemma~\ref{form-tr}(c)).
\end{proof}

\begin{rem}
The form $\ka_B$ is, by definition, nothing but the reduced trace form of the Azumaya algebra $B$ as defined in \cite{KO}. This proves (without the construction of the characteristic polynomial as done in \cite{KO}) that the reduced trace form, which a priori takes values in $S$, does take values in $R$.
\end{rem}

\begin{cor} Every Azumaya algebra $B$ over $R$ has a nonsingular, invariant and $\bfAut(B)$-invariant bilinear form $\ka_B$ such that $(B,\ka_B)$ satisfies the IBF-principle~{\rm \ref{princip}}. In particular, $\IBF_R(B)$ is a free $R$-module with basis $\{\ka_B\}$. \end{cor}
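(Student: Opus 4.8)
The plan is to reduce this statement to Theorem~\ref{teo:Azu}, whose hypotheses cover exactly the $S/R$-forms of a matrix algebra. The only thing needing verification is therefore that an arbitrary Azumaya $R$-algebra \emph{is} such a form; once this is established, all the assertions---nonsingularity, invariance, $\bfAut(B)$-invariance, the IBF-principle, and the freeness of $\IBF_R(B)$---are read off directly from that theorem together with \ref{princip}(b).

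First I would dispose of the rank. Since an Azumaya algebra $B$ is finitely generated projective over $R$, the function $\fp \mapsto \rank_{R_\fp}(B)$ is locally constant on $\Spec(R)$; being a continuous map into a discrete space on a quasi-compact space, it has finite image. This yields a decomposition $R \simeq \prod_{i=1}^m R_i$ into a finite direct product, together with a matching decomposition $B \simeq \prod_{i=1}^m B_i$ in which each $B_i$ is an Azumaya $R_i$-algebra of constant rank $n_i^2$. As the formation of $\scIBF$, the nonsingularity of a form, and $\bfAut$-invariance are all compatible with finite direct products of the base ring, it suffices to treat each factor; that is, I may assume $B$ has constant rank $n^2$.

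Next I would invoke the structural description of Azumaya algebras: an Azumaya $R$-algebra of constant rank $n^2$ is split by a faithfully flat extension, i.e.\ there exists $S\in\Ralg$ with $S/R$ faithfully flat and an $S$-algebra isomorphism $B\ot_R S \simeq M_n(S)$ (see \cite{KO}). Taking $k=R$ and $\fra = M_n(R)$ in the descent setting~\eqref{descent-setting}, this says precisely that $B$ is an $S/R$-form of $A = M_n(R)$. Theorem~\ref{teo:Azu} then produces the bilinear form $\ka_B$, which is nonsingular, invariant and $\bfAut(B)$-invariant, and for which $(B,\ka_B)$ satisfies the IBF-principle; the latter, via \ref{princip}(b), gives $\IBF_R(B) = R\ka_B$ free of rank $1$. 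Re-assembling over the factors $B_i$ returns the desired form on the original algebra $B$.

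I expect no genuine obstacle here. The constant-rank case is a direct citation of Theorem~\ref{teo:Azu} once the splitting theorem for Azumaya algebras is recalled, and the product decomposition is routine bookkeeping. The only point demanding a little care is verifying that the constructions of \S\ref{sec:fun-IBF}--\S\ref{sec:inv-bas} commute with finite direct products of the base ring, so that piecing together the forms $\ka_{B_i}$ on $B \simeq \prod_i B_i$ indeed yields a single form on $B$ carrying all the asserted properties.
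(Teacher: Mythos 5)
Your proposal is correct and follows essentially the same route as the paper: reduce to the constant-rank case via the decomposition of $\Spec(R)$ by the (locally constant) rank function---which is exactly the paper's decomposition $1_R = e_1 + \cdots + e_s$ into orthogonal idempotents with pairwise distinct ranks---invoke the splitting theorem to realize each constant-rank factor as an $S/R$-form of $M_{n_i}(R_i)$ covered by Theorem~\ref{teo:Azu}, and reassemble the forms $\ka_{B_i}$ as an orthogonal sum. The compatibility checks you flag at the end are precisely the ones the paper carries out (nonsingularity via $\Hom_R(B_1 \boxplus \cdots \boxplus B_s, R) \simeq \bigoplus_i \Hom_{R_i}(B_i, R_i)$, and $\bfAut(B)$-invariance because the decomposition is preserved under base change and automorphisms).
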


\begin{proof} If $B$ has constant rank, then $B$ is an $S/R$-form of some $M_n(R)$ and the results follows from Theorem~\ref{teo:Azu}. In general, we can decompose the identity element $1_R$ of $R$ into a sum $1_R = e_1 + \cdots + e_s$ of orthogonal idempotents $e_i\in R$ such that $B= B_1 \boxplus \cdots \boxplus B_s$ is a direct product of ideals $B_i=e_i B$, each $B_i$ is an Azumaya algebra of constant rank $\rho_i$ over $R_i = e_i R$, and $\rho_i\ne \rho_j$ for $i \ne j$ (if $\rho_i = \rho_j$ then we replace $e_i$, $e_j$ by $e_i + e_j$).
 We then define $\ka_B$ as the orthogonal sum of the forms $\ka_{B_i}$ constructed previously. Nonsingularity follows from $\Hom_R(B_1 \boxplus \cdots \boxplus B_s, R) \simeq \bigoplus_{i=1}^s \Hom_{R_i}(B_i, R_i)$
and the nonsingularity of the $\ka_{B_i}$.
Finally, $\bfAut(B)$-invariance holds since the decomposition $B= B_1 \boxplus \cdots \boxplus B_s$ is preserved under base ring extensions and automorphisms.
\end{proof}

\subsection{Octonion algebras} \label{sec:octo}
As in the previous subsection, $k$ here is an arbitrary base ring and $R\in \kalg$. Following \cite{Bix,LoPeRa,Petersson} we call an algebra $B$ over $R$ an {\em octonion algebra} if its underlying $R$-module is  projective of constant rank $8$, contains an identity element $1_B$, and admits a quadratic form $n_B \co B \to R$, the {\em norm of $B$\/}, satisfying the following two conditions. \begin{enumerate}
  \item[(i)] The associated bilinear form $n_B \co B \times B \to R$, $n_B(a,b) = n_B(a+b) - n_B(a) - n_B(b)$, is nonsingular, and

   \item[(ii)] $n_B(ab) = n_B(a)\, n_B(b)$ holds for all $a,b\in B$.
\end{enumerate}
For an octonion algebra $B$ the linear form $t_B = n_B(1_B, -)$
is called the {\em trace of $B$\/}. An example of an octonion algebra is the algebra $\Zor(R)$ of {\em Zorn vector matrices\/}, defined on the $R$-module
\[    \rmZ = \Zor(R) = \begin{bmatrix}
   R & R^3 \\ R^3 & R \end{bmatrix} \]
with product
\[
    \begin{bmatrix}
      \al_1 & u \\ x & \al_2 \end{bmatrix} \,
    \begin{bmatrix}
      \be_1 & v \\ y & \be_2 \end{bmatrix} =
    \begin{bmatrix}
      \al_1 \be_1 -   {^t u} y & \al_1 v + \be_2 u + x \times y \\
      \be_1 x + \al_2 y + u \times v & - {^t x}v + \al_2 \be_2
    \end{bmatrix}
\]
for $\al_i, \be_i \in R$ and $u,v,x,y\in R^3$. Here $^t u y$ and $x\times y$ are the usual scalar and vector product of vectors in $R^3$.
For this octonion algebra and $a = \left[\begin{smallmatrix}
  \al_1 & u \\ x & \al_2 \end{smallmatrix}\right]\in \rmZ$ one has
\[ 1_{\rmZ} = \begin{bmatrix} 1 & 0 \\ 0 & 1 \end{bmatrix}, \quad
  n_{\rmZ}(a) = \al_1 \al_2 + {^t u x}, \quad \tr_{\rmZ}(a) = \al_1 + \al_2.\]
For our approach to octonion algebras it is important that an $R$-algebra $B$ is an octonion algebra if and only if there exists a faithfully flat (even faithfully flat and \'etale)
$S\in \Ralg$ such that $B \ot_R S \simeq \Zor(S)$ (\cite[Cor.~4.11]{LoPeRa}). The algebra $\Zor(R)$ is referred to as {\em split octonions}.

\begin{theorem}\label{lem:zor} Let $B$ be an octonion algebra over $R$. \sm

{\rm (a)} $B$ is a central $R$-algebra satisfying $\ac(B) = (B,B,B) = [B,B]$. \sm

{\rm (b)} The bilinear form $\ta \co B \times B \to R$, defined by $\ta(x,y) = \tr(xy)$, is an invariant, nonsingular and $\bfAut(B)$-invariant $R$-bilinear form. It coincides with the form $\ta_B$ associated in Theorem~{\rm \ref{prop:referee}} to the bilinear form $\ta$ of $\Zor(R)$ with respect to any splitting $B \ot_R S \simeq \Zor(S)$ of $B$. \sm

{\rm (c)} $(B,\ta)$ satisfies the IBF-principle.
\end{theorem}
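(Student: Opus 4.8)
The plan is to treat this exactly as the Azumaya case (Theorem~\ref{teo:Azu}): first establish all the needed properties for the split octonions $\Zor(R)$, which here play the role of $\fra$ in the descent setting~\eqref{descent-setting} with $k=R$, and then let the general descent machinery do the rest. By the characterization of octonion algebras recalled above (\cite[Cor.~4.11]{LoPeRa}), $B$ is an $S/R$-form of $A=\Zor(R)$ for a suitable faithfully flat $S\in\Ralg$, so $B$ fits \eqref{descent-setting} with $\fra=\Zor(R)$, which is finitely presented (being projective of rank $8$) over $R$. Throughout, $\Zor(R)$ is unital, hence perfect, so by Remark~\ref{rem:perfect} its invariant $R$-bilinear forms are symmetric and $\IBF_{(R,k)}=\IBF_R$.

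First I would record the relevant facts about $\fra=\Zor(R)$ and its trace form $\ta(x,y)=t(xy)$, where $t=\tr$ is the octonion trace. Invariance of $\ta$, i.e.\ $\ta(ab,c)=\ta(a,bc)=\ta(b,ca)$, comes from the two standard identities $t(xy)=t(yx)$ and $t\big((xy)z\big)=t\big(x(yz)\big)$ valid in any composition algebra. Nonsingularity of $\ta$ follows from that of the norm form via the relation $n(x,y)=t(x\bar y)=\ta(x,\bar y)$ together with the fact that conjugation $y\mapsto\bar y$ is a linear automorphism. For $\bfAut(\fra)$-invariance I would use that every automorphism of an octonion algebra preserves the norm and hence the trace; then $\ta\big(f(x),f(y)\big)=t\big(f(x)f(y)\big)=t\big(f(xy)\big)=t(xy)=\ta(x,y)$ for any $f\in\Aut_K(\Zor(K))$, and this holds after every base change $K\in\Ralg$. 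Finally, to obtain the IBF-principle for $(\fra,\ta)$ I would exhibit the decomposition $\Zor(R)=R\,b_0\oplus\ac(\Zor(R))$ with $b_0$ the idempotent of trace $1$, prove $\ac(\Zor(R))=\ker t$, and check that the resulting projection $\pi\co\Zor(R)\to R$ equals $t$, so that $\ta(x,y)=\pi(xy)$ is precisely the form of Corollary~\ref{cor:unit}; that corollary then yields the IBF-principle for $(\fra,\ta)$.

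With these inputs the three assertions follow formally. Centrality of $B$ (part (a)) descends from centrality of $\Zor$ by the argument of Corollary~\ref{cor:bs}(c) (via \cite[Lemma~3.1]{P} and \cite[Lemma~4.6(3)]{GP}). The equalities $\ac(B)=(B,B,B)=[B,B]$ are statements about submodules of $B$ that are images of multiplication maps; since such images commute with the flat base change $R\to S$ and $B\ot_R S\simeq\Zor(S)$, they reduce by faithfully flat descent to the same equalities for $\Zor(S)$, which are contained in the $\fra$-computation above. For (b), Theorem~\ref{prop:referee} produces an invariant, nonsingular (as $\fra$ is finitely presented), and $\bfAut(B)$-invariant form $\ta_B$; to see that $\ta_B$ is the intrinsic trace form $\ta$ of $B$ I would note that both base change to the trace form of $\Zor(S)$ — for $\ta_B$ this is Theorem~\ref{prop:referee}(a), and for $\ta$ this is because the trace commutes with base change and any trivialization $\theta\co B\ot_R S\to\Zor(S)$ preserves trace forms — whence $\ta=\ta_B$ by Lemma~\ref{form-tr}(c). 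Part (c) is then Corollary~\ref{missing} applied to $(\fra,\ta)=(\Zor(R),\ta)$.

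The descent steps are routine once the inputs are in place; the real work, and the main obstacle, is the octonion-specific part of the $\fra$-computation. Concretely I expect the delicate points to be (i) proving $\ac(\Zor(R))=(\Zor,\Zor,\Zor)=[\Zor,\Zor]=\ker t$ over an \emph{arbitrary} base ring, since one must verify that suitable commutators and associators of Zorn vector matrices already span the rank-$7$ module of trace-zero elements without dividing by $2$ or $3$, and (ii) the assertion that automorphisms preserve the norm, which over a ring is cleanest to cite from \cite{LoPeRa, Petersson} (or to reduce to fields via $\Spec(R)$ and the generic quadratic equation $x^2-t(x)x+n(x)1=0$, in the spirit of the Skolem--Noether argument used for Azumaya algebras).
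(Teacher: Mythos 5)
Your proposal is correct and follows essentially the same route as the paper: establish (i) the decomposition $\Zor(R)=Re\oplus\ac(\Zor(R))$ with $\ac=(\cdot,\cdot,\cdot)$-module $=[\cdot,\cdot]$-module $=\ker t$, (ii) nonsingularity of $\ta$, and (iii) that $\ta$ is the form of Corollary~\ref{cor:unit} (hence the IBF-principle for $(\Zor(R),\ta)$), obtain $\bfAut$-invariance from the uniqueness of the trace of a quadratic algebra (\cite[Lem.~1.2]{Petersson}, as you suggest citing), and then descend everything via Theorem~\ref{prop:referee}, Lemma~\ref{form-tr}(c) and Corollary~\ref{missing}. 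Your only deviations are cosmetic --- nonsingularity via $\ta(x,y)=n(x,\bar y)$ instead of the paper's explicit Zorn-matrix formula $\ta(a,b)=\al_1\be_1+\al_2\be_2-{}^tuy-{}^txv$, and invariance via composition-algebra trace identities rather than directly from Corollary~\ref{cor:unit} --- and you correctly flag the tedious associator/commutator computation over an arbitrary base ring as the real work, exactly as the paper does.
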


\begin{proof} The algebra $B$ fits into our descent setting with $k=R$ and $\fra = \Zor(R)$. We first prove all assertions for $\fra$. Straightforward calculations  (admittedly tedious in the case of the associator module) show that
\begin{enumerate}
  \item[(i)] $[\fra, \fra]=(\fra, \fra, \fra)= \ac(\fra)$, $\fra= R e \oplus \ac(\fra)$ for $e=\left[ \begin{smallmatrix} 1 & 0 \\ 0 & 0 \end{smallmatrix} \right]$, $\fra$ is central.

  \item[(ii)] For $a,b$ as in the product formula above we have $\ta(a,b) = \al_1 \be_1 + \al_2 \be_2 - {^t u} y - {^t x} v$. Hence $\ta$ is nonsingular and symmetric.
 \item[(iii)] $\ta \big(\ac(\fra)\big) = 0$, $\ta(e)=1$, whence $\ta$ is the bilinear form associated to the decomposition $\fra = Re \oplus \ac(\fra)$ in Corollary~\ref{cor:unit}.
\end{enumerate}
This corollary now implies that $(\fra, \ta)$ satisfies the IBF-principle. To establish that $\ta$ is $\bfAut(\fra)$-invariant, we recall that since $\fra$ is a quadratic algebra the trace linear form $\tr$ of $\fra$ is uniquely determined by the unital algebra $\fra$ (\cite[Lem.~1.2]{Petersson}). For any extension $K\in \kalg$, the base change $\ta_K$ is therefore the bilinear form $\ta_{\fra \ot K}$ of the $K$-algebra $\fra\ot_R K = \Zor(K)$. Uniqueness of the trace then implies that $\tr_K$ is $\Aut_K(\fra_K)$-invariant.

We now consider an arbitrary octonion algebra $B$ over $R$ and choose a faithfully flat extension $S\in \Ralg$ such that $B \ot_R S \simeq \Zor(S)$ as $S$-algebras. In the first part of the proof we have established all claims for $\fra= \Zor(R)$, whence also for $\Zor(S)$. The assertions in (a) now hold for $B$ since they are all preserved by faithfully flat descent. In (b) it suffices to establish the second part, but this follows from the fact that the base change of the trace form $\ta$ of $B$ to $S$ is the trace form of $\Zor(S)$.  (c) is a special case of
Corollary~\ref{missing}. \end{proof}

\begin{rem}[Quadratic algebras] The experts will undoubtedly have noticed that the automor\-phism-invariance of the bilinear form $\ka_B$ comes from the fact that octonions are quadratic algebras, see e.g.\ \cite[1.1]{Petersson}. Hence our techniques can also be applied to certain quadratic algebras whose trace forms are invariant.
\end{rem}

\subsection{Alternative algebras}\label{susec:alt}
We consider alternative algebras, always assumed to be unital, over some base ring $R$. Recall (\cite{Bix}) that an alternative algebra $B$ is called {\em separable\/} if for every algebraically closed field $K$ in $\Ralg$ the $K$-algebra $B\ot_R K$ is finite-dimensional and a direct sum of simple ideals. Equivalently, the unital universal multiplication envelope of $B$ is a separable associative algebra. By \cite[Prop.~2.11]{Bix}, an $R$-algebra $B$ is central separable and alternative if and only if $R=R_1 \boxplus R_2$ is a direct sum of two ideals such that $B_1= R_1B$ is an Azumaya algebra over $R_1$ and $B_2=R_2B$ is an octonion algebra over $R_2$. It is now straightforward to extend the results of \S\ref{subsec:Azu} and \S\ref{sec:octo} to central separable alternative algebras. We leave the details to the reader and only mention the following.

\begin{cor}\label{cor:cs-alt} A central separable alternative $B$ over $R$ has a nonsingular invariant bilinear form $\ka_B$ such that\/ $\IBF_R(B)$ is a free $R$-module with basis $\{\ka_B\}$.
\end{cor}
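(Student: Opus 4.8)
The plan is to reduce everything to the two cases already settled, using the structure theorem for central separable alternative algebras recalled just above. By \cite[Prop.~2.11]{Bix} I may write $R = R_1 \boxplus R_2$, with orthogonal idempotents $e_1, e_2$ satisfying $e_1 + e_2 = 1_R$ and $R_i = e_i R$, in such a way that $B_1 = R_1 B$ is an Azumaya algebra over $R_1$ and $B_2 = R_2 B$ is an octonion algebra over $R_2$; moreover $B = B_1 \boxplus B_2$ as $R$-algebras, so that $B_1 B_2 = B_2 B_1 = 0$. First I would invoke the results already in hand for the two factors: the concluding Corollary of \S\ref{subsec:Azu} (for Azumaya algebras) supplies a nonsingular, invariant and $\bfAut(B_1)$-invariant form $\ka_{B_1}$ with $\IBF_{R_1}(B_1) = R_1 \ka_{B_1}$ free of rank $1$, while Theorem~\ref{lem:zor} supplies the analogous $\ka_{B_2} = \ta$ for the octonion factor, with $\IBF_{R_2}(B_2) = R_2 \ka_{B_2}$ free of rank $1$. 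I then set $\ka_B = \ka_{B_1} \perp \ka_{B_2}$, the orthogonal sum, given explicitly by $\ka_B(a,b) = \ka_{B_1}(e_1 a, e_1 b) + \ka_{B_2}(e_2 a, e_2 b)$.

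The substantive point is that invariant forms on $B$ cannot couple the two ideals, which forces the clean decomposition $\IBF_R(B) \simeq \IBF_{R_1}(B_1) \oplus \IBF_{R_2}(B_2)$. Indeed, for $a \in B_1$ and $b \in B_2$ unitality gives $a = e_1 a$, so invariance yields $\be(a,b) = \be(e_1 \cdot a, b) = \be(e_1, a b) = \be(e_1, 0) = 0$, and symmetrically $\be(b,a) = 0$; thus every $\be \in \IBF_R(B)$ restricts to a pair of invariant forms on the factors, and conversely. Since the $R$-action on $B_i$ factors through $R \to R_i$, these are exactly the $R_i$-bilinear invariant forms on $B_i$ (here $B_i$ is perfect, being unital, so by Remark~\ref{rem:perfect} there is no distinction between $(R,R)$- and $(R_i,R_i)$-invariance). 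Hence $\IBF_R(B) = R_1 \ka_{B_1} \oplus R_2 \ka_{B_2}$.

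It remains to read off the two assertions. For freeness, note $r \ka_B = (e_1 r) \ka_{B_1} \oplus (e_2 r) \ka_{B_2}$, so as $r$ ranges over $R$ the pair $(e_1 r, e_2 r)$ ranges over $R_1 \times R_2$; thus $R \ka_B = \IBF_R(B)$, and $r\ka_B = 0$ forces $e_1 r = e_2 r = 0$, hence $r = 0$, so $\{\ka_B\}$ is a basis and $\IBF_R(B)$ is free of rank $1$. For nonsingularity, the canonical identification $\Hom_R(B,R) \simeq \Hom_{R_1}(B_1,R_1) \oplus \Hom_{R_2}(B_2,R_2)$ carries $\sfz^{-1}(\ka_B)$ to $\sfz^{-1}(\ka_{B_1}) \oplus \sfz^{-1}(\ka_{B_2})$, a direct sum of bijections, whence $\ka_B$ is nonsingular; invariance and $\bfAut(B)$-invariance likewise hold componentwise, the decomposition $B = B_1 \boxplus B_2$ being preserved under base change and automorphisms exactly as in the Azumaya corollary. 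The only step requiring genuine (if short) argument is the vanishing of mixed invariant forms recorded above; the rest is the assembly of the two known cases.
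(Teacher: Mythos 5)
Your proposal is correct and follows exactly the route the paper intends: the paper states this corollary right after invoking \cite[Prop.~2.11]{Bix} and explicitly ``leaves the details to the reader,'' those details being precisely your assembly of the decomposition $R=R_1\boxplus R_2$, $B=B_1\boxplus B_2$ with the Azumaya corollary of \S\ref{subsec:Azu} and Theorem~\ref{lem:zor}, via an orthogonal sum of forms just as in the Azumaya case of non-constant rank. Your one substantive verification --- that invariance forces $\be(B_1,B_2)=\be(B_2,B_1)=0$ via $\be(a,b)=\be((e_1 1_B)a,b)=\be(e_1 1_B,ab)=0$, and that the restrictions land in $R_i$ because $e_i$ acts as identity on $B_i$ --- is exactly the omitted detail, and it is carried out correctly.
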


\subsection{Jordan algebras}\label{subsec:Jor}
Central separable Jordan algebras over rings $R$ containing $\frac{1}{2}$ (\cite{Bix:Ja,loos:sep}) are another class of algebras to which our results apply. %We will leave the details to the interested reader and only mention the following.

By definition, a unital Jordan algebra $J$ over $R$ is separable if and only if $J\ot_R K$ is finite-dimensional semisimple for all fields $K\in \Ralg$. A central separable Jordan algebra $J$ is generically algebraic (\cite[Ex.~2.4(d)]{loos}). Let $\tr\in \Hom_R(J,R)$ be its generic trace. By Prop.~2.7 of loc.\ cit.\ the associated bilinear form $\ta$, defined by $\ta(a,b) = \tr(ab)$, is invariant, $\Aut(J)$-invariant and commutes with extensions and faithfully flat descent. Since separability and being generically algebraic is invariant under base ring extensions, $\ta$ is in fact $\bfAut(J)$-invariant. By \cite[Cor.~16.16]{loos:jp}, $\ta$ is nondegenerate for separable Jordan algebras over fields. From Lemma~\ref{ns-char} we then get that $\ta$ is nonsingular.

\begin{lem}\label{ns-char} Let $M$ be a finitely generated projective $R$-module and let $\be \in \scL^2_R(M)$. Then $\be$ is nonsingular if and only if $\be_K$ is nondegenerate for all $K\in \kalg$ which are fields.
\end{lem}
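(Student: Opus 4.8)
The plan is to reduce the whole statement to a property of the single $R$-linear map $\hat\be := \sfz^{-1}(\be)\co M \to \Hom_R(M,R)$ attached to $\be$ by the isomorphism $\sfz$ of \eqref{Rk-bil1}, and then to test that property at the residue fields of $R$. The key structural input is that, since $M$ is finitely generated projective, so is $\Hom_R(M,R)$, and the canonical map $\om_K\co \Hom_R(M,R)\ot_R K \to \Hom_K(M\ot_R K, K)$ is an isomorphism for \emph{every} $K\in\Ralg$ (this is standard and, unlike the version appearing in the proof of Lemma~\ref{form-tr}(d), needs no flatness of $K$). Exactly as in the commutative triangle of that proof one has $\sfz^{-1}(\be_K) = \om_K\circ(\hat\be\ot\Id_K)$. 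Hence $\be$ is nonsingular precisely when $\hat\be$ is an isomorphism, while $\be_K$ is nondegenerate precisely when $\hat\be\ot\Id_K$ is injective.

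The forward implication is then immediate: if $\be$ is nonsingular, $\hat\be$ is an isomorphism, so $\hat\be\ot\Id_K$ is an isomorphism, in particular injective, for every $K$; thus $\be_K$ is nondegenerate. This uses neither that $K$ is a field nor anything beyond the identification above.

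The content lies in the reverse implication. Assuming $\be_K$ nondegenerate for all fields $K$, I want $\hat\be$ to be an isomorphism. Since $M$ and $\Hom_R(M,R)$ are finitely generated, $\hat\be$ is an isomorphism iff its kernel and cokernel vanish, and by the local--global principle it suffices to verify this after localizing at each maximal ideal $\fm$ of $R$, i.e.\ to show that each $\hat\be_\fm\co M_\fm \to \Hom_{R_\fm}(M_\fm,R_\fm)$ is an isomorphism. Over the local ring $R_\fm$ both modules are free of the same rank $n$, so $\hat\be_\fm$ is ``square'', and by Nakayama's lemma it is an isomorphism iff its reduction modulo $\fm$, namely $\hat\be_\fm\ot_{R_\fm}\kappa(\fm)$ with $\kappa(\fm)=R_\fm/\fm R_\fm$, is an isomorphism. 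Via the identification of the first paragraph this reduction is $\sfz^{-1}(\be_{\kappa(\fm)})$, a $\kappa(\fm)$-linear map from the $n$-dimensional space $M\ot_R\kappa(\fm)$ to its dual. Now $\kappa(\fm)$ is a field that is an $R$-algebra, hence among the $K$ allowed in the hypothesis, so $\be_{\kappa(\fm)}$ is nondegenerate and this map is injective; an injective linear map between finite-dimensional vector spaces of equal dimension is bijective. Therefore $\hat\be_\fm$ is an isomorphism for every $\fm$, so $\hat\be$ is an isomorphism and $\be$ is nonsingular.

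The main obstacle is exactly this upgrade in the reverse direction, from the pointwise, one-sided hypothesis (injectivity after passing to residue fields) to global two-sided invertibility. Two features of finitely generated projective $M$ make it work: the equality of the local ranks of $M$ and $\Hom_R(M,R)$, which turns injectivity at a residue field into bijectivity there, and the commutation of $\hat\be$ with the (non-flat) base changes $R\to\kappa(\fm)$ through the isomorphism $\om_K$; Nakayama's lemma then lifts bijectivity at $\kappa(\fm)$ back to bijectivity over $R_\fm$. The one point requiring genuine care is to justify that $\om_K$ is still an isomorphism for these non-flat base changes, and this is legitimate precisely because $M$ is finitely generated projective.
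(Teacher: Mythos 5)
Your proof is correct and is essentially the paper's argument: the paper's proof consists solely of citations to Bourbaki (the local--global principle, Nakayama's lemma over local rings, and the structure of finitely generated projective modules as locally free modules of finite rank), and the localization-at-maximal-ideals argument you write out is exactly the content of those citations. Your two points of care --- that the canonical map $\om_K \co \Hom_R(M,R)\ot_R K \to \Hom_K(M\ot_R K,K)$ is an isomorphism for arbitrary, not necessarily flat, base change precisely because $M$ is finitely generated projective, and that equal local ranks upgrade injectivity over the residue field $\kappa(\mathfrak{m})$ to bijectivity there and then, via Nakayama and the ``square matrix'' fact, to an isomorphism over $R_{\mathfrak{m}}$ --- are exactly what the cited Bourbaki results supply.
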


\begin{proof} This is an application of \cite[II, \S3.3 Th.~1, \S3.2 Cor. de la Prop.~6 and \S5.3 Th.~2]{bou:ACa}. \end{proof}

Thus, in view of Corollary~\ref{inv-cent-co} we have the following.

\begin{theorem}\label{prop:Jcs}
The generic trace form $\ta$ of a central separable Jordan algebra $J$ over a ring $R$ containing $\frac{1}{2}$ is an invariant nonsingular and  $\bfAut(J)$-invariant bilinear form, and $\IBF_R(J)$ is a free $R$-module admitting $\{\ta\}$ as a basis.
\end{theorem}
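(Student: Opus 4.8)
The plan is to assemble Theorem~\ref{prop:Jcs} from the groundwork already laid out in the paragraph preceding it, treating the main content as having been established in the discussion of the generic trace form and then invoking the representability machinery of \S\ref{invfunsec}. The essential strategy is to verify the three hypotheses of Corollary~\ref{inv-cent-co}---that $\ta$ is a nonsingular invariant bilinear form and that $J$ is a central $R$-algebra---and then read off the conclusion directly. First I would recall, as already argued in the surrounding text, that a central separable Jordan algebra $J$ is generically algebraic by \cite[Ex.~2.4(d)]{loos}, so that its generic trace $\tr \in \Hom_R(J, R)$ exists and the associated form $\ta(a,b) = \tr(ab)$ is well-defined.

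\emph{Invariance and automorphism-invariance.} Next I would establish that $\ta$ is invariant and $\bfAut(J)$-invariant. By \cite[Prop.~2.7]{loos}, $\ta$ is invariant, is $\Aut(J)$-invariant, and commutes with base ring extensions and faithfully flat descent. To upgrade the pointwise automorphism-invariance to $\bfAut(J)$-invariance in the sense of Automorphism invariance~\ref{def:autoinv}, I would observe that separability and being generically algebraic are both stable under arbitrary base ring extension; hence for every $S \in \Ralg$ the base-changed form $\ta_S$ is again the generic trace form of the separable algebra $J \ot_R S$, and therefore is $\Aut_S(J \ot_R S)$-invariant by the same citation. This is precisely the defining condition for $\bfAut(J)$-invariance.

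\emph{Nonsingularity.} This is the step I expect to carry the real weight, and it is handled by passing to fields. By \cite[Cor.~16.16]{loos:jp}, the trace form of a separable Jordan algebra over a field is nondegenerate; applying this to $J \ot_R K$ for every field $K \in \Ralg$ (again using that separability is preserved under base change), I conclude that $(\ta)_K$ is nondegenerate for all such $K$. Since $J$ is central separable, its underlying $R$-module is finitely generated and projective, so Lemma~\ref{ns-char} applies and upgrades this field-wise nondegeneracy to nonsingularity of $\ta$ over $R$. The main obstacle is really conceptual bookkeeping rather than computation: one must be careful that ``nondegenerate over every field'' is exactly the hypothesis of Lemma~\ref{ns-char}, and that the finitely-generated-projective condition needed there is supplied by centrality and separability.

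\emph{Conclusion via the centroid.} Finally, since $J$ is by assumption a central $R$-algebra---meaning the natural map $R \to \cent_R(J)$ is an isomorphism---and since $\ta \in \IBF_{(R,k)}(J)$ is nonsingular with $k = R$, Corollary~\ref{inv-cent-co} applies verbatim and yields that $\IBF_R(J) = R\,\ta \simeq R$ is a free $R$-module of rank $1$ with $\{\ta\}$ as a basis. This packages all three claimed properties of $\ta$ together with the freeness statement, completing the proof.
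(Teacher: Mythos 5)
Your proposal is correct and follows essentially the same route as the paper: existence of the generic trace via \cite[Ex.~2.4(d)]{loos}, invariance and $\bfAut(J)$-invariance via \cite[Prop.~2.7]{loos} together with stability of separability under base change, nonsingularity via \cite[Cor.~16.16]{loos:jp} combined with Lemma~\ref{ns-char}, and the conclusion via Corollary~\ref{inv-cent-co}. Your explicit remark that centrality and separability supply the finitely-generated-projective hypothesis of Lemma~\ref{ns-char} is a point the paper leaves implicit, but it is the same argument.
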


We leave it to the interested reader to look into the following possible improvement of this last result.

\begin{con} For $J$ as in Proposition~\ref{prop:Jcs}, is the $R$-module $\scIBF_R(J)$ projective?
\end{con}

Since $J$ is central and therefore a faithful $R$-module, we have $\ka_J(J,J) =R$ by \cite[I, Cor.~1.10]{DI}. Hence, if the question has a positive answer, Proposition~\ref{trans} applies and yields that $(J, \ta)$ satisfies the IBF-principle. In particular, Theorem~\ref{prop:Jcs} then becomes a corollary.

%%%%%%%%%%%%%%%%%%%%%%%%%%%%%%%%%%%%%%%%%%%%%%%%%%%%%%%%
%%%%%% end of section Lie and Azumaya
%%%%%%%%%%%%%%%%%%%%%%%%%%%%%%%%%%%%%%%%%%%%%%

\section{Graded invariant bilinear forms} \label{sec:graded}

In this section we classify graded invariant bilinear forms, which are particularly important for infinite-dimensional Lie theory
. We will therefore concentrate on these (except for preliminaries considerations),  and leave the extension to other classes of algebras to the reader.
\sm

We begin with some generalities about gradings and graded forms. Unless specified otherwise, we continue with our standard setting: $k$ is a base ring and $B$ is an arbitrary $R$-algebra for some $R\in\kalg$. Throughout, $\La$ is an abelian group.

\begin{defn}[Graded algebras and graded invariant bilinear forms] A {\em $\La$-graded algebra\/} is a pair $(C,\scC)$ consisting of a $k$-algebra $C$ and a family $\scC= (C^\la)_{\la \in \La}$ of $k$-submodules $C^\la$ of $C$ satisfying $C= \bigoplus_{\la \in \La} C^\la$ and $C^\la C^\mu \subset C^{\la + \mu}$ for all $\la, \mu \in \La$. We will say that a $k$-algebra $C$ is \textit{$\La$-graded\/} if $(C, \scC)$ is a $\La$-graded algebra for some family $\scC$.  We point out that it is allowed that some of the homogeneous submodules $C^\la$ vanish. If $C$ is a unital algebra, then necessarily $1_C \in C^0$.

Assume $C$ is $\La$-graded. We call $\ka \in \scL^2_k(C)$ a {\em graded bilinear form\/} if $\ka(C^\la, C^\mu) = 0$ whenever $\la + \mu \ne 0$. \end{defn}

\begin{defn}[Graded $S/R$-forms of algebras]\label{def:grad-forms}
In the following we assume that $R\in \kalg$ is $\La$-graded, say $R=\bigoplus_{\la \in \La} R^\la$. An $R$-algebra $B$ is then called a \textit{$\La$-graded $R$-algebra\/} if $B= \bigoplus_{\la \in \La} B^\la$ is $\La$-graded as a $k$-algebra and the $\La$-gradings of $R$ and $B$ are compatible in the sense that $R^\la B^\mu \subset B^{\la + \mu}$ for all $\la, \mu \in \La$. For example, for any $k$-algebra $\fra$ the $R$-algebra $\fra \ot_k R$ is canonically a $\La$-graded $R$-algebra by defining the $\la$-homogeneous submodule $(\fra\ot_k R)^\la = \fra \ot_k R^\la$.

In the descent setting~\eqref{descent-setting} we suppose that $R\in \kalg$ is $\La$-graded and that $S\in \Ralg$ is a $\La$-graded $R$-algebra. We view $\fra\ot_k S$ with its canonical $\La$-grading. An $S/R$-form $B$ of $\fra \ot_k R$ is called \textit{graded} if $B$ is a $\La$-graded $R$-algebra and there exists an  $S$-algebra isomorphism $\theta \co B \ot_R S \to \fra \ot_k S$ which respects the gradings: $\theta(b^\la \ot s^\mu) \in \fra\ot_k S^{\la + \mu}$ for $b^\la \in B^\la$ and $s^\mu \in S^\mu$.
\end{defn}

We now specialize to Lie algebras and derive a graded version of Theorem~\ref{teo:Lie-desc}. Following the notation used in loc.~cit.\ we change $\fra$ to $\g$.

\begin{prop}\label{prop:grfo} Let $\g$ be a finite-dimensional semisimple Lie algebra over a field $k$ of characteristic $0$ with Killing form $\ka$. Assume that $R\in \kalg$ and $S\in \Ralg$ are $\La$-graded, $S/R$ is faithfully flat and $B$ is a graded $S/R$-form of $\g \ot_k R$.
\sm

{\rm (a)} If $\ka_B$ is the form attached to $\ka$ in Theorem~{\rm \ref{prop:referee}},  then $\ka_B$ is the Killing form of the $R$-algebra $B$ and  satisfies $\ka_B(B^\la, B^\mu) \subset R^{\la + \mu}$ for all $\la,\mu \in \La$.
\sm

{\rm (b)} If $\g$ is central (and hence simple), every graded invariant bilinear form $\be \in \IBF_k(B)$ can be written in the form $\vphi \circ \ka_B$ for a unique $\vphi \in \{ \vphi \in R^*: \vphi(R^\la) = 0\hbox{ for } \la \ne 0\} \simeq (R^0)^*$. \end{prop}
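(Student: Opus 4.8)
The plan is to dispatch (a) first and then feed it into (b). For (a), the identification of $\ka_B$ with the Killing form of $B$ is precisely Theorem~\ref{teo:Lie-desc}(a), so the only new point is the grading compatibility $\ka_B(B^\la, B^\mu)\subset R^{\la+\mu}$. I would fix a grading-respecting trivialization $\theta\co B\ot_R S\to\g\ot_k S$ as provided by Definition~\ref{def:grad-forms}, so that $\theta(B^\la\ot 1)\subset\g\ot_k S^\la$, and recall from Theorem~\ref{prop:referee}(a) that $(\ka_B)_S = \theta^*(\ka_S)$. For homogeneous $b^\la\in B^\la$ and $b^\mu\in B^\mu$, the image of $\ka_B(b^\la,b^\mu)$ under the structure map $\sigma\co R\to S$ equals $(\ka_B)_S(b^\la\ot 1, b^\mu\ot 1) = \ka_S\big(\theta(b^\la\ot 1), \theta(b^\mu\ot 1)\big)$; since $\ka_S(x\ot s, y\ot t) = \ka(x,y)\,st$ carries $(\g\ot_k S^\la)\times(\g\ot_k S^\mu)$ into $S^{\la+\mu}$, this image lies in $S^{\la+\mu}$.

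Now $\sigma$ is a graded injection: it is injective because $S/R$ is faithfully flat, and graded because $\sigma(R^\la) = \sigma(R^\la)\,1_S\subset R^\la S^0\subset S^\la$. Hence, writing $\ka_B(b^\la,b^\mu) = \sum_\nu r^\nu$ with $r^\nu\in R^\nu$, its image $\sum_\nu\sigma(r^\nu)$ has homogeneous components $\sigma(r^\nu)\in S^\nu$ and lies in $S^{\la+\mu}$; directness of the grading of $S$ kills $\sigma(r^\nu)$ for $\nu\ne\la+\mu$, and injectivity then kills $r^\nu$. Thus $\ka_B(b^\la,b^\mu)\in R^{\la+\mu}$, proving (a).

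For (b), since $B$ is perfect by Theorem~\ref{teo:Lie-desc}(a), Remark~\ref{rem:perfect} gives $\IBF_k(B) = \IBF_{(R,k)}(B)$; and since $(B,\ka_B)$ satisfies the IBF-principle by Theorem~\ref{teo:Lie-desc}(b), the map $R^*\to\IBF_{(R,k)}(B)$, $\vphi\mapsto\vphi\circ\ka_B$ of~\ref{princip}(a) is a bijection. So every $\be\in\IBF_k(B)$ is uniquely $\vphi\circ\ka_B$, and it remains to show that $\be$ is graded if and only if $\vphi(R^\la)=0$ for all $\la\ne 0$. The implication ``$\Leftarrow$'' is immediate from (a): for $\la+\mu\ne 0$ one has $\ka_B(B^\la, B^\mu)\subset R^{\la+\mu}$ with $\la+\mu\ne 0$, so $\be(B^\la, B^\mu) = \vphi\big(\ka_B(B^\la, B^\mu)\big)=0$.

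The converse carries the real content and is the step I expect to be the main obstacle; it rests on the spanning identity $\sum_{\la+\mu=\nu}\ka_B(B^\la, B^\mu) = R^\nu$ for every $\nu\in\La$. By (a) each summand lies in $R^\nu$, so the $k$-submodules $R^\nu_\ka := \sum_{\la+\mu=\nu}\ka_B(B^\la, B^\mu)$ satisfy $R^\nu_\ka\subset R^\nu$. Summing over all $\nu$ and regrouping the pairs $(\la,\mu)$ gives $\sum_\nu R^\nu_\ka = \ka_B(B,B)$, and $\ka_B(B,B) = R$ because $\overline{\ka_B}\co\scIBF_R(B)\to R$ is surjective (being an isomorphism by the IBF-principle). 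Thus $\sum_\nu R^\nu_\ka = R = \bigoplus_\nu R^\nu$ with $R^\nu_\ka\subset R^\nu$, and directness of the grading forces $R^\nu_\ka = R^\nu$ for each $\nu$. Consequently, if $\be$ is graded and $\nu\ne 0$, then $\be$ vanishes on every $B^\la\times B^\mu$ with $\la+\mu=\nu$, so $\vphi$ vanishes on $\ka_B(B^\la, B^\mu)$ for all such pairs and hence on $R^\nu_\ka = R^\nu$. Finally, since $R = R^0\oplus\bigoplus_{\la\ne 0}R^\la$, restriction to $R^0$ identifies $\{\vphi\in R^*:\vphi(R^\la)=0\text{ for }\la\ne 0\}$ with $(R^0)^*$, which gives the stated uniqueness and completes the argument.
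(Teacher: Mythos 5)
Your proposal is correct and follows essentially the same route as the paper: part (a) reduces, via a grading-respecting trivialization and the identity $(\ka_B)_S=\theta^*(\ka_S)$, to the observation that $R^\la = S^\la\cap R$ (your graded-injection phrasing of the structure map $\sigma\co R\to S$ is just the paper's identification $B^\la = B^\la\ot 1_S\subset \g\ot_k S^\la$ spelled out), and part (b) combines the IBF-principle from Theorem~\ref{teo:Lie-desc}(b) with the homogeneous spanning identity $R^\nu=\sum_{\la+\mu=\nu}\ka_B(B^\la,B^\mu)$, which is exactly how the paper writes $r=\sum_i\ka_B(b_i,b'_i)$ with $b_i\in B^{\mu_i}$, $b'_i\in B^{\la-\mu_i}$. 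Your version is somewhat more detailed (e.g.\ making explicit the perfectness step $\IBF_k(B)=\IBF_{(R,k)}(B)$ and the converse direction the paper calls obvious), but there is no substantive difference.
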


\begin{proof} (a) That $\ka_B$ is the Killing form of $B$, was established in Corollary~\ref{cor:bs}. Since there exists an $S$-algebra isomorphism $\theta \co B \ot_R S \to \g \ot_k S$ respecting the gradings, there is no harm to assume that $B \subset \g\ot_k S$ with $B^\la = B^\la \ot 1_S \subset \g \ot S^\la$. Recall from Corollary~\ref{cor:bs} that $\ka_B = \ka_{\g \ot_k S} \mid B \times B$ where $\ka_{\g \ot_k S}$ is the Killing form of the $S$-algebra $\g \ot_k S$, and that $\ka_{\g \ot_k S}$ coincides with the base change $\ka_S$ of $\ka$ by $S$. Because $\ka_S\big( \g\ot_k S^\la, \g\ot_k S^\mu)\subset S^{\la + \mu}$, it suffices to show that $R^\la=S^\la\cap R$ for all $\la \in \La$. To see this last statement, we use that $1_S\in S^0$, so that $R^\la = R^\la 1_S \in R^\la S^0 \subset S^\la$. Then $R^\la\subset S^\la\cap R$ follows. The other inclusion is immediate.

(b) We have seen in Theorem~\ref{teo:Lie-desc}(b) that $(B, \ka_B)$ satisfies the IBF-principle. It follows that $R=\Span_\ZZ \{ \ka_B(b_1, b_2) : b_i \in B\}$. Let now $\be \in \IBF_k(B)$ be a graded invariant bilinear form. Again by Theorem~\ref{teo:Lie-desc} there exists a unique $\vphi \in R^*$ such that $\be = \vphi \circ \ka_B$. We claim $\vphi(r) =0$ for any $r\in R^\la$, $\la \ne 0$. Indeed, there exist finitely many $b_i \in B^{\mu_i}$ and $b'_i \in B^{\la - \mu_i}$ such that $r = \sum_i \ka_B(b_i, b'_i)$. Hence $\vphi(r) = \sum_i \be(b_i, b'_i) = 0$. That, conversely, every $\vphi\in (R^0)^*$ gives rise to a graded invariant bilinear form, is of course obvious. \end{proof}

Proposition~\ref{prop:grfo} can be applied to multiloop algebras based on simple finite-dimensional Lie algebras. We recall their definition: $\g$ is a finite-dimensional simple Lie algebra over an algebraically closed field $k$ of characteristic $0$, and $\si = (\si_1, \ldots, \si_N)$ is a family of commuting automorphisms of $\g$, which have finite orders $m_1, \ldots, m_N$ respectively. We fix a set of primitive $m$-th roots of unity $\ze_m \in k$ which are compatible in the sense that $\ze_{ln}^l = \ze_n$, and put
\[  R = k[t_1^{\pm 1}, \ldots, t_N^{\pm 1}]\subset
   S = k[t_1^{\pm \frac{1}{m_1}}, \ldots, t_N^{\pm \frac{1}{m_N}}].
\]
The multiloop algebra $\caL = \caL(\g, \si)$ associated to these data is the Lie algebra
$$ \caL = \textstyle \bigoplus_{i_1, \ldots, i_N \in \ZZ^N} \, \g_{i_1, \ldots,i_N} \ot_k t_1^{\frac{i_1}{m_1}} \ldots t_N^{\frac{i_N}{m_N}} $$
where $\g_{i_1, \dots,i_N} = \{ x\in \g: \si_j(x) = \ze_{m_j}^{i_j} x \hbox{ for all } j\}$. Since $\g_{i_1, \dots,i_N}= \g_{i_1+k_1, \dots, i_N+k_N}$ for $(k_1 , \dots, k_N) \in m_1 \ZZ \oplus \cdots \oplus m_N \ZZ$, it is clear that $\caL$ is an $R$-Lie algebra. It is in fact an $S/R$-form of $\g \ot_k R$ (\cite[Th.~3.6]{abp2}).

To enter the grading into the picture,  we let $\Lambda = \frac{1}{m_1}\ZZ \times \cdots \times \frac{1}{m_N}\ZZ$. Then $\Lambda \simeq \ZZ^N$ and we have a natural $\Lambda$-grading on $S$ and $R$ (the reader will note that the homogeneous elements of $R$ have degrees in $\ZZ \times \cdots \times \ZZ \subset \Lambda$). The Lie algebra $\fg \ot_k S$ is naturally $\Lambda$-graded and this makes $\caL$ also naturally into a $\Lambda$-graded Lie algebra. It is immediate from the definitions that $\caL$ is $R$-graded and  a graded $S/R$-form of $\fg \ot_k S.$ Since in our situation $R^0 = k$, Proposition~\ref{prop:grfo} yields the first part of the following.

\begin{cor}\label{inv-ml} Let $\caL$ be a multiloop Lie algebra based on a simple finite-dimensional Lie algebra over an algebraically closed field $k$ if characteristic $0$. Then, up to scalars in $k$, the $\ZZ^N$-graded Lie algebra $\caL$ has a unique graded invariant $k$-bilinear form $\be$. It is given by
\[
     \be(x\ot t_1^{\frac{j_1}{m_1}} \ldots t_N^{\frac{j_N}{m_N}} , y \ot t_1^{\frac{l_1}{m_1}} \ldots t_N^{\frac{l_N}{m_N}} ) = \ka(x,y) \de_{j_1+l_1, 0} \ldots \de_{j_N+l_N, 0}
\]
where $\ka$ is the Killing form of $\fg$. The form $\beta$ is nondegenerate. Every $k$-linear automorphism of $\caL$ is orthogonal with respect to $\be$.
 \end{cor}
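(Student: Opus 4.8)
The plan is to read off most of the statement from Proposition~\ref{prop:grfo} and then supply the two points it does not address: nondegeneracy of $\be$ and orthogonality of \emph{all} $k$-automorphisms. Since $R = k[t_1^{\pm 1}, \dots, t_N^{\pm 1}]$ has $R^0 = k$, part (b) of that proposition says every graded invariant $k$-bilinear form on $\caL$ is $\vphi \circ \ka_B$ for a unique $\vphi \in (R^0)^* \cong k$, which is exactly uniqueness up to a scalar in $k$. To recover the displayed formula I would take $\vphi = \vphi_0$, the projection of $R$ onto its constant term $R^0 = k$, and use that $\ka_B$ is the restriction to $\caL$ of the base-changed Killing form $\ka_S$ on $\g \ot_k S$ (Theorem~\ref{teo:Lie-desc}(a), via Corollary~\ref{cor:bs}). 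Then $\ka_B(x \ot t^{j/m}, y \ot t^{l/m}) = \ka(x,y)\, t^{(j+l)/m}$, whose constant term is $\ka(x,y)$ when $j+l = 0$ and $0$ otherwise; applying $\vphi_0$ gives the formula, and $\be$ is manifestly symmetric since $\ka$ is.

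For nondegeneracy I would use that $\ka_B$ is nonsingular, hence nondegenerate, as an $R$-bilinear form (Theorem~\ref{teo:Lie-desc}(a)). Suppose $\be(z, \caL) = 0$. For every $w \in \caL$ and every $r \in R$ we have $rw \in \caL$ and, by $R$-bilinearity of $\ka_B$, $\be(z, rw) = \vphi_0\big(r\,\ka_B(z,w)\big) = 0$. If $f := \ka_B(z,w) \neq 0$ in $R$, picking the monomial $r = t^{-\mu_0}$ for some exponent $\mu_0$ with nonzero coefficient $f_{\mu_0}$ makes the constant term of $rf$ equal to $f_{\mu_0} \neq 0$, a contradiction. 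Hence $\ka_B(z,w) = 0$ for all $w$, so $z = 0$. Thus $\be$ is nondegenerate.

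The substantive part, and the step I expect to be the main obstacle, is orthogonality of an \emph{arbitrary} $k$-automorphism $\psi$ of $\caL$ (note that invariant $k$-forms are \emph{not} unique up to scalar, so this cannot follow from graded uniqueness). Here I would first identify the $k$-centroid: because $\caL$ is perfect and central over $R$ (Theorem~\ref{teo:Lie-desc}), and the $k$-centroid of a perfect algebra is commutative, every $k$-centroidal map commutes with each multiplication $\chi_r$ and is therefore $R$-linear, giving $\Cent_k(\caL) = \Cent_R(\caL) = R$. Conjugation by $\psi$ then induces a ring automorphism $\bar\psi \in \Aut_k(R)$ with $\psi(ra) = \bar\psi(r)\,\psi(a)$, i.e.\ $\psi$ is $\bar\psi$-semilinear. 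Since $\caL$ is finitely generated projective over $R$, Proposition~\ref{kill-Lie}(b) applies and yields $\ka_B(\psi a, \psi b) = \bar\psi\big(\ka_B(a,b)\big)$, whence $\be(\psi a, \psi b) = (\vphi_0 \circ \bar\psi)\big(\ka_B(a,b)\big)$. It remains to check $\vphi_0 \circ \bar\psi = \vphi_0$: a $k$-algebra automorphism of the Laurent polynomial ring carries units to units, so it is a monomial substitution $t_i \mapsto c_i t^{a_i}$ with $(a_i) \in \GL_N(\ZZ)$; thus $\bar\psi(t^\mu)$ is a scalar times $t^{A\mu}$ with $A$ invertible, only $\mu = 0$ contributes to the constant term, and $\bar\psi(1) = 1$, so constant terms are preserved. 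Therefore $\be(\psi a, \psi b) = \be(a,b)$. The delicate point worth isolating is precisely the identification $\Cent_k(\caL) = R$ and the resulting $R$-semilinearity of every $k$-automorphism, as this is what lets the semilinear invariance of the Killing form be brought to bear on a hypothesis that is only $k$-linear.
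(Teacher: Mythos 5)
Your proposal is correct and follows the paper's own proof in all essentials: uniqueness and the explicit formula are read off from Proposition~\ref{prop:grfo} using $R^0=k$, and orthogonality of an arbitrary $f\in\Aut_k(\caL)$ is obtained exactly as in the paper, via conjugation on the centroid, centrality giving $\al$-semilinearity, Proposition~\ref{kill-Lie}(b), and the realization $\Aut_k(R)\simeq \GL_N(\ZZ)\ltimes (k^\times)^N$ showing that the constant-term projection $\epsilon\co R\to R^0=k$ satisfies $\epsilon\circ\al=\epsilon$. The only divergences are minor: the paper derives nondegeneracy directly from the nondegenerate pairings $\be\mid \caL^\la\times\caL^{-\la}$ induced by the nondegeneracy of $\ka$, where you instead use a monomial-multiplier argument from the nonsingularity of $\ka_B$ over $R$ (both valid), and your explicit identification $\Cent_k(\caL)=\Cent_R(\caL)\simeq R$ via perfectness correctly fills in a step the paper leaves implicit.
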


\begin{proof} The nondegeneracy of $\ka$ implies that $\be \mid \caL^\la \times \caL^{-\la}$ is a nondegenerate pairing for all $\la$, which in turn forces $\be$ to be nondegenerate.

Let now $f\in \Aut_k(\caL)$. The map $\chi \mapsto f \circ \chi \circ f^{-1}$ is an automorphism of the centroid of $\caL$. Since $\caL$ is central, $f$ is $\al$-semilinear for some $k$-linear automorphism $\al$ of $R$. By Proposition~\ref{kill-Lie}(b), we then know $\ka_\caL \circ (f \times f) = \al \circ \ka_\caL$ for $\ka_\caL$ the Killing form of the $R$-algebra $\caL$. The form $\be$ is obtained by composing $\ka_\caL$ with the canonical projection
$\epsilon \co R \to R^0=k$. It is therefore enough to show $\epsilon = \epsilon \circ \al$. But this is indeed the case: Every $k$-linear automorphism of $R$ fixes $R^0$ pointwise and permutes the $R^\la$, $\la \ne 0$. To see this, we realize $\Aut_k(R)$ as $\GL_N(\ZZ) \ltimes (k^\times)^N$ in the natural fashion. \end{proof}

This corollary is of interest for the construction of extended affine Lie algebras based on centreless Lie tori, which heavily depends on the existence of a graded invariant nondegenerate bilinear form on a Lie torus. The reader is referred to \cite{AABGP, n:eala,n:eala-summer,n:persp} for background material on extended affine Lie algebras and Lie tori. More precisely, we will deal here with Lie-$\ZZ^N$-tori.

\begin{cor}\label{cor:eala} Up to scalars, a centreless Lie-$\ZZ^N$-torus has a unique graded invariant nondegenerate bilinear form.
\end{cor}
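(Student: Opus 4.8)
The plan is to deduce Corollary~\ref{cor:eala} from Corollary~\ref{inv-ml} by means of the structure theory of Lie tori. The crucial external input is the \emph{multiloop realization theorem}: every centreless Lie-$\ZZ^N$-torus $L$ is isomorphic, as a $\ZZ^N$-graded Lie algebra, to a multiloop algebra $\caL = \caL(\g, \si)$ based on a finite-dimensional simple Lie algebra $\g$ over an algebraically closed field $k$ of characteristic $0$ (see \cite{ABFP2, abp2}). I would open the proof by recalling this statement and fixing a graded isomorphism $\ph \co L \to \caL$, where the grading is the external $\ZZ^N$-grading that makes $L$ a $\La$-graded algebra in the sense of \S\ref{sec:graded} with $\La = \ZZ^N$.

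The remainder is a transport-of-structure argument. Let $\be$ be the graded invariant nondegenerate $k$-bilinear form on $\caL$ produced by Corollary~\ref{inv-ml}. I would then check that its pullback $\ph^*(\be)$, given by $\ph^*(\be)(x,y) = \be(\ph(x), \ph(y))$, is again graded, invariant, and nondegenerate on $L$: invariance holds because $\ph$ is a Lie-algebra isomorphism, gradedness because $\ph$ respects the $\ZZ^N$-grading, and nondegeneracy because $\ph$ is bijective. This yields existence.

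For uniqueness I would push any graded invariant nondegenerate form $\ga$ on $L$ forward to $(\ph^{-1})^*(\ga)$ on $\caL$, which by the same three observations is graded, invariant, and nondegenerate. Corollary~\ref{inv-ml} then forces $(\ph^{-1})^*(\ga) = c\,\be$ for some scalar $c \in k^\times$, and pulling back along $\ph$ gives $\ga = c\, \ph^*(\be)$. Thus every such form on $L$ is a nonzero scalar multiple of $\ph^*(\be)$, which is the assertion.

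The entire mathematical weight of the corollary rests on the multiloop realization theorem; once it is invoked, the argument is purely formal. The only point I would take care to state explicitly is that the realization is a graded isomorphism for the external $\ZZ^N$-grading, so that the class of forms governed by Corollary~\ref{inv-ml} matches exactly the class of graded forms under consideration here. I would also remark that the distinguished form is automatically compatible with the finer root grading, since the Killing form $\ka$ of $\g$ pairs opposite root spaces, but this refinement is not needed for the statement.
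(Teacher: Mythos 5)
There is a genuine gap, and it sits precisely in the step you yourself identify as carrying ``the entire mathematical weight'': the claim that \emph{every} centreless Lie-$\ZZ^N$-torus is graded-isomorphic to a multiloop algebra is false. The multiloop realization theorem \cite[Th.~3.3.1]{ABFP2} applies only under the additional hypothesis that the Lie torus is finitely generated as a module over its centroid (fgc), and this hypothesis can fail. By \cite{n:tori} there is a dichotomy: a centreless Lie-$\ZZ^N$-torus is either fgc or has root system of type ${\rm A}$, and in the latter case it is graded isomorphic to $\lsl_n(k_q)$ for a quantum torus $k_q$ by \cite{bgk,bgkn,y1}. When some entry of $q$ is not a root of unity (e.g.\ $k=\CC$, $N=2$, $q$ generic), $\lsl_n(k_q)$ is not finitely generated over its centroid and is \emph{not} a multiloop algebra, so your argument never gets started for these algebras. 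This is exactly why the paper's proof splits into two cases: in the fgc case it invokes \cite[Th.~3.3.1]{ABFP2} and then Corollary~\ref{inv-ml} --- which is the route you take --- while for the type ${\rm A}$ quantum-torus case it appeals to a separate result, \cite[7.10]{n:persp}, to obtain existence and uniqueness of the graded form on $\lsl_n(k_q)$.

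The remainder of your argument --- transporting the form of Corollary~\ref{inv-ml} along a graded isomorphism $\ph$, checking that gradedness, invariance and nondegeneracy are preserved by $\ph^*$ and $(\ph^{-1})^*$, and deducing uniqueness up to scalars --- is correct and is indeed how the paper implicitly applies Corollary~\ref{inv-ml} in the fgc case; it needs no change. To repair the proof you must either add the missing case (citing a result such as \cite[7.10]{n:persp} for $\lsl_n(k_q)$, since Corollary~\ref{inv-ml} gives you nothing there), or restrict the statement to centreless Lie-$\ZZ^N$-tori that are fgc.
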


\begin{proof} By \cite{n:tori} a centreless Lie-$\ZZ^N$-torus is either finitely generated over its centroid or is a Lie torus with a root system of type ${\rm A}$. Concerning the latter type, one knows from \cite{bgk,bgkn,y1} that they are graded isomorphic to $\mathfrak{sl}_n(k_q)$ for a quantum torus $k_q$, and for these types of Lie algebras the claim follows from \cite[7.10]{n:persp}. If $L$ is a centreless Lie-$\ZZ^N$-torus which is finitely generated over its centroid, then \cite[Th.~3.3.1]{ABFP2} says that $L$ is graded-isomorphic to a multiloop algebra so that we can apply the previous Corollary~\ref{inv-ml}.\end{proof}

Corollary~\ref{cor:eala} has been proven in \cite[Th.~7.1]{y:lie} for Lie tori graded by a torsion-free group $\La$, using the structure theory of these types of Lie tori.

\end{document}